\definecolor{asparagus}{rgb}{0.53, 0.66, 0.42}
\newtheorem{thm}{Theorem}[section]
\newtheorem{lem}[thm]{Lemma}
\theoremstyle{remark}
\newtheorem{rem}[thm]{Remark}
\theoremstyle{definition}
\renewcommand{\leq}{\leqslant}
\renewcommand{\geq}{\geqslant}
\renewcommand{\subset}{\subseteq}
\newcommand{\E}{\mathbb{E}}
\newcommand{\N}{\mathbb{N}}
\newcommand{\1}{\mathbf{1}}
\newcommand{\R}{\mathbb{R}}
\newcommand{\Z}{\mathbb{Z}}
\renewcommand{\P}{\mathbb{P}}
\newcommand{\eps}{\varepsilon}
\renewcommand{\d}{{\mathrm{d}}}
\newcommand{\red}{\color{red}}
\newcommand{\aeps}{\eps^\frac{d}{d-2}}
\DeclareMathOperator{\capacity}{Cap}
\newcommand{\avsum}{\mathop{\mathpalette\avsuminner\relax}\displaylimits}
\newcommand\avsuminner[2]{%
  {\sbox0{$\m@th#1\sum$}%
   \vphantom{\usebox0}%
   \ooalign{%
     \hidewidth
     \smash{\vrule height\dimexpr\ht0+1pt\relax depth\dimexpr\dp0+1pt\relax}%
     \hidewidth\cr
     $\m@th#1\sum$\cr
   }%
  }%
}
\title{Convergence rates for the homogenization of the Poisson problem in randomly perforated domains}
\begin{document}
\maketitle
\smallskip
\begin{center}

\bigskip

{\sc Arianna Giunti}
\end{center}

\bigskip

{\bf Abstract.} In this paper we provide converge rates for the homogenization of the Poisson problem with Dirichlet boundary conditions in a randomly perforated domain of $\R^d$, $d \geq 3$. We assume that the holes that perforate the domain are spherical and are generated by a rescaled marked point process $(\Phi, \mathcal{R})$. The point process $\Phi$ generating the centres of the holes is either a Poisson point process or the lattice $\Z^d$; the marks $\mathcal{R}$ generating the radii are unbounded i.i.d random variables having finite $(d-2+\beta)$-moment, for $\beta > 0$. We study the rate of convergence to the homogenized solution in terms of the parameter $\beta$. We stress that, for certain values of $\beta$, the balls generating the holes may overlap with overwhelming probability.

\section{Introduction}

In this paper we obtain convergence rates for the homogenization of the Poisson problem in a bounded domain of $\R^d$, $d \geq 3$, that is perforated by many small random holes $H^\eps$. We impose with Dirichlet boundary conditions on the boundary of the set and of the holes $H^\eps$. We assume that, for $\eps > 0$, the random set $H^\eps$ is generated by a rescaled \textit{marked point process $(\Phi, \mathcal{R})$}, where $\Phi$  is either the lattice $\Z^d$ or a Poisson point process of intensity $\lambda > 0$. The associated marks $\mathcal{R}= \{ \rho_z \}_{z \in \Phi}$ are independent and identically distributed random variables that satisfy the moment condition
\begin{align}\label{integrability.radii.intro}
\E \bigl[ \rho^{d-2+\beta} \bigr] < +\infty, \ \ \ \beta > 0.
\end{align}
More precisely, given $(\Phi, \mathcal{R})$ and a bounded and smooth domain $D\subset \R^d$, we define
\begin{align}\label{def.holes}
H^\eps:= \bigcup_{z \in \Phi \cap (\frac{1}{\eps}D)} B_{(\aeps \rho_{z}) \wedge 1} (\eps z), \ \ \ \ \ \  D^\eps:= D \backslash H^\eps
\end{align}
with $(\frac{1}{\eps}D):= \{ x \in \R^d \, \colon \, \eps x \in D \}$. As shown in \cite{GHV}, if $\beta=0$ in \eqref{integrability.radii.intro}, then for every $f \in H^{-1}(D)$  and $\P$-almost every realization of the random set $H^\eps$, the solutions to 
\begin{align}\label{P.eps}
\begin{cases}
-\Delta u_\eps = f \ \ \ \ &\text{in $D^\eps$}\\
u_\eps = 0 \ \ \ \ &\text{on $\partial D^\eps$}
\end{cases}
\end{align}
converge weakly in $H^1_0(D)$ to the homogenized problem
\begin{align}\label{P.hom}
\begin{cases}
-\Delta u + C_0 u = f \ \ \ \ &\text{in $D$}\\
u = 0 \ \ \ \ &\text{on $\partial D$.}
\end{cases}
\end{align}
The constant $C_0 > 0$ is the limit of the density of harmonic capacity generated by the set $H^\eps$: If $S^{d-1}$ denotes the $(d-1)$-dimensional unit sphere, then
\begin{equation}\label{strange.term}
C_0 := c_d\begin{cases}
\E_\rho \bigl[ \rho^{d-2} \bigr] \ \ \ \ & \text{if $\Phi = \Z^d$}\\
 \lambda \E_\rho \bigl[ \rho^{d-2} \bigr] & \text{if $\Phi = \mathop{Poi}(\lambda)$}
\end{cases}, \ \ \ \ c_d:= (d-2) \mathcal{H}^{d-1}(S^{d-1}).
\end{equation}
In this paper, we strengthen the condition of \cite{GHV} from $\beta=0$ to $\beta > 0$ in \eqref{integrability.radii.intro} and study the convergence rates of $u_\eps$ to the homogenized solution $u$.

\bigskip

By the Strong Law of Large Numbers, assumption \eqref{integrability.radii.intro} with $\beta=0$ is minimal in order to ensure that for $\P$-almost every realization of $H^\eps$, its density of capacity admits a finite limit. However, it does not prevent the balls in $H^\eps$ from having radii that are much bigger than size $\aeps$. This gives rise to clustering phenomena with overwhelming probability. In particular, for $\beta < d -2$, the expected number of balls of $H^\eps$ that intersect, namely such that their radius $\aeps \rho_z$ is bigger than the typical distance $\eps$ between the centres, is of order $\eps^{-d+2+\beta}$ (over an expected total of $\eps^{-d}$ balls). The same holds also under assumption \eqref{integrability.radii.intro} for  $\beta < \frac{(d-2)^2}{2}$, with the expected number of overlapping balls being of order $\eps^{-d +2 +\frac{2}{d-2}\beta}$ .
The presence of balls that overlap is the main challenge in the proof of the qualitative homogenization statement obtained in \cite{GHV} and is one of the challenges of the current paper. It requires a careful treatment of the set $H^\eps$ to ensure that the presence of long chains of overlapping balls does not destroy the homogenization process. For a more detailed discussion on this issue we refer to the introductory section in \cite{GHV} and to Subsection \ref{sub.ideas} of the present paper.

\bigskip

The main results contained in this paper provide an annealed (i.e. averaged in probability) estimate for the $H^1$-norm of the homogenization error $u_\eps- W_\eps u$. The function $W_\eps$ is a suitable corrector function that is related to the so-called \textit{oscillating test function} \cite{Cioranescu_Murat,Tartar}. We assume that $\Phi$ is the lattice $\Z^d$ or that it is a Poisson point process in dimension $d=3$. If $\E \bigl[ \cdot \bigr]$ denotes the expectation under the probability measure associated to the process $(\Phi, \mathcal{R})$, we show that\footnote{In the case of $\Phi$ being a Poisson point process, there is a factor $\log\eps$ on the right-hand side. We refer to Theorem \ref{t.main} for the precise statement.}
\begin{align}\label{thm.rough}
\E\bigl[ \| u_\eps - W_\eps u \|_{H^1_0(D)}^2 \bigr]^{\frac 1 2} \leq C\begin{cases}
\eps^{\frac{d}{d^2- 4}\beta} \ \ \ &\text{if $\beta \leq d-2$}\\
\eps^{\frac{d}{d+2}} \ \ \ &\text{if $\beta > d-2$}
\end{cases}
\end{align}
We stress that in the case of periodic holes, namely when $\Phi=\Z^d$ and $\rho_z \equiv r > 0$ for all $z \in \Z^d$, the optimal rate on the right-hand side of \eqref{thm.rough} is $\eps$ \cite{Kacimi_Murat}. 

\bigskip

The main quantity that governs the decay of the homogenization error $u_\eps - W_\eps u$ is the convergence of the capacity density of $H^\eps$ to the constant term $C_0$ defined in \eqref{strange.term}. In the periodic case mentioned in the previous paragraph, the term $C_0= c_d r^{d-2}$ is ``very'' close to the density of capacity of $H^\eps$ already at scale $\eps$. Heuristically, indeed, if $A \subset D$ we have
\begin{align}\label{density.capacity}
\capacity(A \cap H^\eps) \simeq \sum_{z \in (\eps Z)^d \cap A} \capacity(B_{\aeps r}(z)) \simeq |A| \eps^{-d}  c_d (\aeps r)^{d-2} \stackrel{\eqref{strange.term}}{=} C_0 |A|,
\end{align}
and this chain of identities is true as long as $|A|$ is at least of order $\eps$. On the other hand, in our setting, this identity is expected to hold at scales that are larger than $\eps$ due to the fluctuations of the process $(\Phi, \mathcal{R})$. For a more detailed explanation of the exponents in \eqref{thm.rough}, we refer to Subsection \ref{sub.ideas}. {We also remark that the threshold $d-2$ in the parameter $\beta$ obtained in \eqref{thm.rough} is related to the $L^2$-nature of the norm considered for the homogenization error. Roughly speaking, the norm considered in \eqref{thm.rough} requires a control on the expectation of the square of the capacity generated by the balls in $H^\eps$.}

\bigskip

Starting with \cite{Cioranescu_Murat} and \cite{papvar.tinyholes}, there is a large amount of literature devoted to the homogenization of \eqref{P.eps}, both for deterministic \cite{DalMasoGarroni.punctured, HoeferVelazquez.reflections} and random holes $H^\eps$ \cite{CaffarelliMellet, CasadoDiaz, MarchenkoKhruslov}; similar problems have also been studied in the case of the fractional laplacian $(-\Delta)^s$, \cite{CaffarelliMellet.fractional, Focardi.fractional} or for nonlinear elliptic operators \cite{CasadoDiaz.nonlinear, Zhikov}. All the models considered in the deterministic case contain assumptions that ensure that,  for $\eps$ small enough, the holes in $H^\eps$  do not to overlap. In the random models mentioned above, the previous property is as well required, at least for $\P$-almost every realization and $\eps>0$ small enough. For a complete and more detailed description of these works, we refer to the introduction of \cite{GHV}. 

\smallskip

We also mention that the analogue of \eqref{P.eps} for a Stokes (and Navier-Stokes) system with no-slip boundary conditions on the holes $H^\eps$ has been considered in \cite{AllaireARMA1990a, Allaire_arma2, SanchezP82} in the periodic case and then extended to more general configurations of holes (see, e.g., \cite{Desvillettes2008, Hillairet2018, Hillairet2017}). In the case of the Stokes operator, the limit equation contains an additional zero-th order term similar to $C_0$ in \eqref{P.hom}. Under the same assumptions of this paper, the analogue of the homogenization result contained in \cite{GHV} has been proven for a Stokes system in \cite{GH1, GH_pressure}. 

\smallskip

In the periodic case, quantitative rates of convergence for \eqref{P.eps} to \eqref{P.hom} have been first obtained in \cite{Kacimi_Murat}. In \cite{Russel, Wang_Xu_Zhang}, similar results have been obtained with $-\Delta$ replaced by (also nonlinear) oscillating elliptic operators. When the holes are randomly distributed, the first quantitative result on the convergence of $u_\eps$ to $u$ has been obtained in \cite{Figari_Orlandi}. In this paper, the authors study the analogue of \ref{P.eps} for the operator $-\Delta + \lambda$ in an unbounded domain of $\R^3$, that is perforated by $m$ spherical holes of identical radius $\sim m^{-1}$. The centres of the holes are independent and distributed according to a compactly supported and continuous potential $V$. If $u_m$ denotes the analogue of $u_\eps$,when the massive term $\lambda$ is big enough (compared to $V$), the authors provide rates of convergence for the $L^2$-norm of the difference $u_m -u$ in the limit $m \to +\infty$. Furthermore, they prove the Gaussianity of the fluctuations of $u_m$ around the homogenized solution $u$ in the CLT-scaling. In \cite{Jonas_Richard}, this result has been obtained in the same setting of \cite{Figari_Orlandi} without any constraint on the massive term $\lambda$. 
 
\bigskip

\subsection*{Organization of the paper.} This paper is organized as follows: In Section \ref{s.main} we introduce the setting and state the main results. In Subsection \ref{sub.ideas}, we provide an overview of the main challenges and ideas used to prove Theorem \ref{t.main} . In Section \ref{s.periodic} we prove Theorem \ref{t.main}, $(a)$ while in Section \ref{s.poi.d} we show how to extend the argument of the previous section when $\Phi$ is a Poisson point process in $\R^3$ (Theorem \ref{t.main}, case $(b)$). Finally, Section \ref{Aux} contains some auxiliary results that are used in the proofs of the main results.

\section{Setting and main results}\label{s.main}
Let $d \geq 3$ and $D \subset \R^d$ be a bounded and smooth domain that is star-shaped with respect to the origin. For $\eps > 0$, we define the random set of holes $H^\eps$ and the punctured set $D^\eps$ as in \eqref{def.holes}.

\smallskip

We assume that the union of balls $H^\eps$ is generated by a marked point process $(\Phi, \mathcal{R})$ on $\R^d \times \R_+$. In other words, we generate the centres of the balls in $H^\eps$ via a point process $\Phi$. To each point $z\in \Phi$, we associate a mark $\rho_z \geq 0$ that determines the radius of the ball. We refer to \cite[Chapter 9, Definitions 9.1.I - 9.1.IV]{Daley.Jones.book2}
 for an extensive and rigorous definition of marked point processes and their associated measures on $\R^d \times \R_+$.  We denote by $(\Omega; \mathcal{F}, \mathbb{P})$ the probability space associated to $(\Phi, \mathcal{R})$, so that the random sets in \eqref{def.holes} and the random field solving \eqref{P.eps} may be written as $H^\eps= H^\eps(\omega)$, $D^\eps=D^\eps(\omega)$ and $u_\eps(\omega; \cdot)$, respectively. The set of realizations $\Omega$ may be seen as the set of atomic measures $\sum_{n \in \N} \delta_{(z_n, \rho_n)}$ in $\R^d \times \R_+$ or, equivalently, as the set of (unordered) collections $\{ (z_n , \rho_n \}_{ n\in \N} \subset \R^d \times \R_+$. 

\smallskip

Throughout this paper we assume that $(\Phi, \mathcal{R})$  satisfies the following conditions:
\begin{itemize}
\item[(i)] $\Phi$ is either the lattice $\Z^d$ or  $\Phi = \mathop{Poi}(\lambda)$, i.e. a Poisson point process of intensity $\lambda>0$;

\smallskip

\item[(ii)] The marks $\{ \rho_z\}_{z \in \Phi}$ are independent and identically distributed: if $\P_{\mathcal{R}}$ denotes the marginal of the marks with respect to the process $\Phi$, then the n-correlation function may be written as the product
\begin{align}
f_n ( (z_1, \rho_1), \cdots, (z_n, \rho_n)) = \Pi_{i=1}^n f_1((z_i, \rho_i)), \ \ \ \ f_1((z, \rho)) = f(\rho).
\end{align}

\smallskip

\item[(iii)] The marks $\mathcal{R}$ have finite $(d-2+\beta)$-moment, namely the density function $f$ in (ii) satisfies
\begin{align}\label{integrability.radii}
\E_{\rho}\bigl[ \rho^{d-2+\beta}\bigr] : = \int_0^{+\infty} \rho^{d-2 +\beta} f(\rho) \d \rho \leq 1, \ \ \ \ \text{with $\beta > 0$.}
\end{align}
\end{itemize}

We stress that conditions (i)-(ii) yield that $(\Phi, \mathcal{R})$ is stationary. In the case $\Phi=\mathop{Poi}(\lambda)$, the process $(\Phi, \R)$ is stationary with respect to the action of the group of translations $\{\tau_x \}_{x\in \R^d}$. This means that the probability measure $\P$ is invariant under the action of the transformation $\tau_x: \Omega \to \Omega$, \, $\omega=  \{ ( z_i; \rho_{z_i}) \}_{i \in \N} \mapsto \tau_x \omega := \{ ( z_i + x; \rho_{z_i}) \}_{i \in \N}$. In the case $\Phi=\Z^d$ the same holds under the action of the group $\{\tau_z \}_{z\in \Z^d}$.

\bigskip

\subsection*{Notation.} When no ambiguity occurs, we skip the argument $\omega \in \Omega$ in the notation for  $H^\eps(\omega), D^\eps(\omega)$ and $u_\eps(\omega ; \cdot)$, as well as in all the other random objects. We denote by $\E\bigl[ \cdot \bigr]$ and $\E_{\Phi}\bigl[ \cdot \bigr]$ the expectations under the total probability measure $\mathbb{P}$ the probability measure $\mathbb{P}_{\Phi}$ associated to the point process $\Phi$.  For $\eps > 0$ and a set $A \subset \R^d$, we define 
\begin{align}\label{notation.psi}
\Phi(A):=\bigl\{ z \in \Phi \, \colon \, z \in A \bigr\}, \ \ \  \Phi_\eps(A) := \bigl\{ z \in \Phi \, \colon \, \eps z \in A \bigr\}
\end{align}
and the random variables
\begin{align}
N(A):= \#(\Phi(A)), \ \ \ N^\eps(A):=  \#(\Phi^\eps(A)).
\end{align}

\smallskip

For any $\mu \in H^{-1}(D)$, we write $\langle \, \cdot \, ;  \, \cdot \, \rangle$ for the duality product with $H^1_0(D)$; we use the notation $\avsum_{i \in I}$ for the averaged sum $\#(I)^{-1}\sum_{i\in I}$ and $\lesssim$ and $\gtrsim$ instead of $\leq C$ and $\geq C$ with the constant $C$ depending on the dimension $d$, the domain $D$ and, in the case of $\Phi=\mathop{Poi}(\lambda)$, the intensity rate $\lambda$.

\bigskip

\subsection{Main result} 
Before stating the main results, we need to define a suitable corrector function $W_\eps$ that appears in the homogenization error $u_\eps - W_\eps u$.
 We stress that, also in the case of periodic holes, the solutions $u_\eps$ are only expected to converge weakly in $H^1_0(D)$ to $u$. Therefore, the homogenized solution needs to be suitably modified via a corrector $W_\eps$ in order to be a good approximation for $u_\eps$ also { in the strong topology of $H^1_0(D)$}. 

\smallskip

For $x \in \R^d$ we set
\begin{align}\label{minimal.distance}
R_{\eps,x} := \frac \eps 4 \min_{z \in \Phi^\eps(D), \atop  z \neq x} \biggl\{|z - x| ; 1 \biggr\}
\end{align}
Note that, if $\Phi= \Z^d$, then the above quantity is always $\frac \eps 4$. For $\delta > 0$, we denote by $\Phi_{\delta}^\eps(D) \subset \Phi_\eps(D)$ the set
\begin{align}\label{thinning.psi}
\Phi_{\delta}^{\eps}(D):= \biggl\{ z \in \Phi_\eps(D) \, \colon  \, \aeps\rho_z \leq \eps^{1 +\delta}, \, \, R_{\eps,z} \geq 2 \sqrt d \aeps\rho_z \biggr\}.
\end{align}

\smallskip

For each $z \in \tilde \Phi^\eps(D)$, let $w_{\eps,z} \in H^1(B_{\frac \eps 2}(\eps z))$ be the solution to
\begin{align}\label{def.harmonic.annuli}
\begin{cases}
-\Delta w_{z,\eps} = 0 \ \ \ \ &\text{in $B_{R_{\eps,z}}(\eps z)\backslash B_{\aeps\rho_z}(\eps z)$}\\
w_{z,\eps} = 0 \ \ \ &\text{on $\partial B_{R_{\eps,z}}(\eps z)$}\\
w_{z,\eps} = 1 \ \ \ &\text{on $\partial B_{R_{\eps,z}}(\eps z)$}.
\end{cases}
\end{align}
We thus define
\begin{align}\label{corrector}
W_\eps(x)=\begin{cases}
w_{z,\eps} \ \ \ &\text{if $x\in B_{R_{\eps,z}}(\eps z)\backslash B_{\aeps\rho_z}(\eps z)$}\\
0 \ \ \ &\text{if $x \in B_{\aeps\rho_z}(\eps z)$}\\
1 \ \ \ &\text{otherwise}
\end{cases}
\end{align}
We stress that \eqref{thinning.psi} ensures that definitions \eqref{def.harmonic.annuli} and \eqref{corrector} are well-posed since the set $\{ B_{R_{\eps,z}}(\eps z)\}_{z \in \Phi_\delta^\eps(D)}$ is made of disjoint balls and, for every $z \in \Phi^\eps_\delta(D)$, it holds $B_{\aeps \rho_z}(\eps z) \subset B_{R_{\eps,z}}(\eps z)$. Note that in the above definition the function $W_\eps \in H^1(D)$ depends on the choice of the parameter $\delta$ used to select the subset $\Phi_\delta^\eps(D)$. The optimal parameter $\delta$ will be fixed in Theorem \ref{t.main}. We finally stress that, in the periodic case $\Phi= \Z^d$ and $\rho_z \equiv  r$, for any $\delta > 0$ and $\eps$ small enough, the function $W_\eps$ coincides with the \textit{oscillating test function} constructed in \cite{Cioranescu_Murat, Kacimi_Murat}. 

\bigskip

\begin{thm}\label{t.main}
Let $(\Phi, \mathcal{R})$ satisfy conditions (i)-(iii) of the previous subsection. For $\eps>0$ and $f \in L^\infty(D)$, with $\|f\|_{L^\infty(D)}=1$, let $u_\eps$ and $u$ be as in \eqref{P.eps} and \eqref{P.hom}, respectively. We consider the random field $W_\eps$ in \eqref{corrector} with 
\begin{align*}
\delta = \begin{cases}
\frac{4}{d^2- 4} \ \ \ &\text{if $\beta \leq d-2$}\\
\frac{2}{d-2}- \frac{2d}{(d+2)\beta} \ \ \ &\text{if $\beta > d-2$}
\end{cases}
\end{align*}.
Then
\begin{itemize}
\item[(a)] If $\Phi= \Z^d$, there exists a constant $C= C(d, D)> 0$ such that
\begin{align*}
\E \bigl[ \| u_\eps - W_\eps u \|_{H^1_0(D)}^2 \bigr]^{\frac 1 2} \leq C\begin{cases}
\eps^{\frac{d}{d^2- 4}\beta} \ \ \ &\text{if $\beta \leq d-2$}\\
\eps^{\frac{d}{d+2}} \ \ \ &\text{if $\beta > d-2$}
\end{cases}
\end{align*}

\item[(b)] If $\Phi = \mathop{Poi}(\lambda)$ with $\lambda >0$ and $d=3$, there exists a constant $C=C(\lambda, D)>0$ such that
\begin{align*}
\E \bigl[ \| u_\eps - \tilde W_\eps u \|_{H^1_0(D)}^2 \bigr]^{\frac 1 2} \leq C\begin{cases}
|\log \eps|\eps^{\frac{3}{5}\beta} \ \ \ &\text{if $\beta \leq 1$}\\
|\log \eps| \eps^{\frac{3}{5}} \ \ \ &\text{if $\beta > 1$}
\end{cases}
\end{align*}

\end{itemize}

\end{thm}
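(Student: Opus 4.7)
The plan is to carry out a quantitative version of the oscillating test function method. First I would introduce an admissible test function $\phi := u_\eps - W_\eps u - r_\eps$, where $r_\eps$ is a localized corrector supported in a neighborhood of each ``bad'' hole $z \in \Phi_\eps(D) \setminus \Phi^\eps_\delta(D)$ and chosen so that $\phi \in H^1_0(D^\eps)$. Testing \eqref{P.eps} against $\phi$, subtracting the analogous identity obtained from \eqref{P.hom}, and expanding $\Delta(W_\eps u) = W_\eps \Delta u + 2\nabla W_\eps \cdot \nabla u + u \Delta W_\eps$ via the product rule, one arrives at an energy identity of the schematic form
\begin{align*}
\|\nabla \phi\|_{L^2(D)}^2 = \Ll\langle (1 - W_\eps) f, \phi \Rr\rangle + C_0 \Ll\langle W_\eps u, \phi \Rr\rangle + \Ll\langle u \Delta W_\eps, \phi \Rr\rangle + \mathcal{E}_{\text{bad}}(\phi),
\end{align*}
where $\mathcal{E}_{\text{bad}}$ collects the terms produced by $r_\eps$. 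Since $W_\eps \to 1$ in $L^2$, the first two brackets are lower-order, and the crux is to estimate the residual $-u\Delta W_\eps - C_0 u$ in $H^{-1}(D)$.

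The distribution $-\Delta W_\eps$ is, by \eqref{def.harmonic.annuli}, a sum of signed measures concentrated on the inner and outer spheres of the annuli around points of $\Phi^\eps_\delta(D)$, with total mass on each annulus equal to the harmonic capacity $c_d (\aeps \rho_z)^{d-2}$ of the inner ball (up to a relative error of order $(\aeps\rho_z/R_{\eps,z})^{d-2}$ coming from the outer boundary condition). Replacing these annular masses by Dirac deltas at the centers $\eps z$ and pairing with $u$ introduces a deterministic concentration error controlled by the $C^1$-regularity of $u$ on scale $\eps$; what remains is the random discrete capacity measure
\begin{align*}
\mathcal F_\eps := c_d \aeps^{d-2} \sum_{z \in \Phi^\eps_\delta(D)} \rho_z^{d-2}\, \delta_{\eps z} - C_0 \mathbf{1}_D.
\end{align*}

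The fluctuation $\mathcal F_\eps$ is then controlled in $H^{-1}(D)$ by a variance computation. In the lattice case (a), the contributions of distinct $z$'s are independent, and the truncation $\rho_z \leq \rho_{\max} := \eps^{(\delta(d-2)-2)/(d-2)}$ built into \eqref{thinning.psi} yields, after smoothing against an arbitrary $H^1_0$-test function,
\begin{align*}
\E\bigl[\|\mathcal F_\eps\|^2_{H^{-1}(D)}\bigr] \lesssim \aeps^{2(d-2)} \eps^{-d} \, \E\bigl[\min(\rho^{d-2}, \rho_{\max}^{d-2})^2\bigr].
\end{align*}
Under \eqref{integrability.radii}, the truncated second moment decomposes differently according to whether $\beta \leq d-2$ or $\beta > d-2$, which is the origin of the dichotomy in the statement and of the $L^2$-threshold at $\beta=d-2$ mentioned in the introduction. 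The Poisson case (b) proceeds along the same lines, combining Campbell's formula with the Wick-type factorization of the two-point function; the extra $|\log \eps|$ factor in dimension three arises from controlling the typical number of neighboring Poisson points inside the safety annulus $B_{R_{\eps,z}}(\eps z)$ in \eqref{minimal.distance}.

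The main obstacle will be the control of $\mathcal{E}_{\text{bad}}$. On every bad hole, $W_\eps = 1$ so that $W_\eps u = u$ fails to match the Dirichlet condition $u_\eps = 0$, and the corrector $r_\eps$ has to cap $u$ there without costing too much energy. A capacitary construction yields
\begin{align*}
\|r_\eps\|_{H^1(D)}^2 \lesssim \|u\|_{L^\infty}^2 \sum_{z \in \Phi_\eps(D) \setminus \Phi^\eps_\delta(D)} (\aeps \rho_z)^{d-2},
\end{align*}
and \eqref{integrability.radii} combined with Markov's inequality bounds the expectation of the right-hand side by $\rho_{\max}^{-\beta}$, with a parallel contribution from the overlap condition in \eqref{thinning.psi}. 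Balancing this bad-ball error against the fluctuation and concentration estimates from the previous paragraph singles out the optimal $\delta$ recorded in the theorem, and produces the exponents in each regime. In the Poisson case (b) an additional difficulty is that bad balls may form long chains of overlapping spheres; stationarity together with a careful enumeration of the connected components of such chains shows that their contribution is only an extra $|\log\eps|$ factor in the final rate.
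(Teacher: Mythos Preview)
Your energy-identity framework and the capacitary corrector $r_\eps$ for the bad holes are in the right spirit, but the proposal misses the mechanism that actually produces the exponent $\tfrac{d}{d+2}$. The paper does \emph{not} estimate $\|\mu_\eps - C_0\|_{H^{-1}}$ by a direct variance computation; it inserts a \emph{mesoscopic} scale $k\eps$ with $k\to\infty$ and writes $\mu_\eps - C_0 = (\mu_\eps - m_k) + (m_k - C_0)$, where $m_k$ is the piecewise-constant average of $\mu_\eps$ over cubes of side $k\eps$. The first piece is controlled by a Kohn--Vogelius--type lemma (a Neumann problem in each cell plus Poincar\'e--Wirtinger), yielding a deterministic error $\sim k\eps$; the second is a law-of-large-numbers fluctuation over $k^d$ i.i.d.\ summands in each cell, of size $k^{-d/2}$. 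Optimising $k\sim\eps^{-2/(d+2)}$ is what gives $\eps^{d/(d+2)}$, and then $\delta$ is chosen so that the bad-ball term $\eps^{(\frac{2}{d-2}-\delta)\beta}$ matches. Your claimed bound $\E\|\mathcal F_\eps\|_{H^{-1}}^2 \lesssim \eps^d\,\E[\rho_{\mathrm{trunc}}^{2(d-2)}]$ does not correspond to either step, and balancing it against the bad-ball error does not reproduce the theorem's $\delta$ or exponents: for $\beta>d-2$ you would get $\delta = \tfrac{2}{d-2}-\tfrac{d}{\beta}$ rather than the stated value, and for $\beta<d-2$ the balance forces $\delta<0$.

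There are also two technical gaps in your fluctuation step. First, Dirac masses $\delta_{\eps z}$ do not belong to $H^{-1}(D)$ for $d\geq 2$ (since $H^1_0\not\hookrightarrow C^0$), so $\mathcal F_\eps$ as you define it has no $H^{-1}$ norm; the paper keeps the surface measures $\partial_n w_{\eps,z}\,\delta_{\partial B_{\eps/4}(\eps z)}$ throughout. Second, ``variance after smoothing against an arbitrary $H^1_0$ test function'' bounds $\sup_\phi \E|\langle\mathcal F_\eps,\phi\rangle|^2$, not $\E\sup_\phi|\langle\mathcal F_\eps,\phi\rangle|^2=\E\|\mathcal F_\eps\|_{H^{-1}}^2$; these do not coincide, and closing this gap is exactly where the cellwise Neumann construction enters. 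In the Poisson case the $|\log\eps|$ is not generated by chains of overlapping bad balls; it comes from the moment $\E_\Phi\bigl[(\eps/R_{\eps,0})^d\,\mathbf 1_{R_{\eps,0}\geq\eps^2}\bigr]$ of the random minimal distance, which shows up in the Kohn--Vogelius estimate because the outer radii are now random. The paper moreover needs a \emph{random} mesoscopic covering $\{K_{k,z}\}$ adapted to the Poisson points (and a further truncation $\tilde R_{\eps,z}$ near cell boundaries) so that the spheres carrying $\mu_\eps$ are cleanly partitioned among cells; none of this machinery appears in your sketch.
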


\bigskip

\begin{rem}
As it becomes apparent in the proof of Theorem \ref{t.main}, the choice of $W_\eps$ is not unique. The same result holds, for instance, if $W_\eps$ is replaced with the oscillating test function $w_\eps$ constructed in {\cite[Section 3]{GHV}} and in Subsection \ref{sub.quenched.periodic} of the present paper. The function $W_\eps$, however, has a simpler and more explicit construction that may be implemented numerically with more efficiency. It is, indeed, an oscillating test function restricted to the balls of $H^\eps$ that do not overlap and have radius smaller than the fixed threshold $\eps^{1+\delta}$.
\end{rem}

\bigskip

\subsection{Ideas of the proofs}\label{sub.ideas}

The proof of Theorem \ref{t.main} is inspired to the proof of the same result in the case of periodic holes shown in \cite{Kacimi_Murat}. The latter, in turn, upgrades the result of \cite{Cioranescu_Murat} from the qualitative statement $u_\eps \rightharpoonup u$ in $H^1_0(D)$ to an estimate on the convergence of the homogenization error. Both arguments rely on the construction of suitable \textit{oscillating test functions} $\{ w_\eps \}_{\eps> 0} \subset H^1(D)$. In the qualitative statement of \cite{Cioranescu_Murat}, these functions allow to pass to the limit $\eps \downarrow 0$ in the weak formulation of \eqref{P.eps} and infer the homogenized equation of \eqref{P.hom}. 

\smallskip

The functions $\{ w_\eps \}_{\eps >0}$ may be constructed as $W_\eps$ in \eqref{corrector}, where the set $\Phi^\eps_\delta$ coincides with the whole set $\Phi=\Z^d$ and $w_{\eps,z}= w_{\eps,0}( \cdot + z)$. Furthermore, they are strictly related to the density of capacity generated by $H^\eps$:  The additional term $C_0= c_d r^{d-2}$ that appears in the homogenized equation \eqref{P.hom} is indeed the limit of the measures $-\Delta w_\eps$ when tested against the function $\rho u_\eps \in H^1_0(D^\eps)$, $\rho \in C^\infty_0(D)$. It is not hard to see from \eqref{corrector} that, for functions that vanish on the holes $H^\eps$, the action of $-\Delta w_\eps$ reduces the periodic measure 
\begin{align}\label{mu.eps}
\mu_\eps = \sum_{z \in \Z^d \cap \frac{1}{\eps}D} \partial_n w_{\eps,z} \delta_{ \partial B_{\frac \eps 4}(\eps z) },
\end{align}
that is concentrated on the spheres $\{ \partial B_{\frac \eps 4}(\eps z) \}_{z \in \Z^d}$.

\smallskip

In \cite{Kacimi_Murat}, the corrector $W_\eps$ is chosen as the oscillating test function $w_\eps$ itself. As a first step, it is shown that the decay of $\| u_\eps - W_\eps u \|_{H^1_0(D)}$ boils down to controlling the convergence of the density of capacity of $H^\eps$ to its limit $C_0$ (c.f. \eqref{strange.term}). The latter is expressed in terms of the decay of the norm $\|\mu_\eps- C_0\|_{H^{-1}(D)}$. As a second step, the authors appeal to a result of \cite{Kohn_Vogelius} to estimate the decay of  $ \| \mu_\eps- C_0\1_D \|_{H^{-1}(D)}$ in terms of the size $\eps$ of the periodic cell $C_\eps:=[-\frac \eps 2; \frac \eps 2]$ of $\mu_\eps$. The crucial feature is that, up to a correction of order $\eps^2$, the measure $\mu_\eps -C_0$ has zero average in $C_\eps$. In other words, we have
\begin{align}\label{zero.average}
\int_{\partial B_{\frac \eps 4}(0)} \partial_n w_{\eps} =  \eps^d \bigl( C_0 + O(\eps^2)\bigr).
\end{align}

\bigskip

In this paper we adapt to the random setting the previous two-step argument. The first main difference is strictly related to the randomness of the radii in $H^\eps$ and needs to be addressed also in the case of bounded radii (i.e. if $\beta= +\infty$ in \eqref{integrability.radii}) and periodic centres.  In this case, the measure $\mu_\eps$ is defined as in \eqref{mu.eps} but, on each sphere $\partial B_{\frac \eps 4}(\eps z)$, $z \in \Z^d$, the term $\partial_n w_\eps$ depends on the random associated mark $\rho_z$. Therefore, contrarily to the periodic case, \eqref{zero.average} may not hold in each cube $\eps z + C_\eps$. Nevertheless, by the Law of Large Numbers, we may expect that the average of $\mu_\eps - C_0$ is close to zero over cubes of size $k\eps$, $k >>1$, as the left-hand side in \eqref{zero.average} turns into an averaged sum of $k$ random variables. This motivates the introduction
of a partition of the set $D$ into cubes of mesoscopic size $k\eps$ (c.f. Section \ref{sub.covering}) that plays the role of the cells $C_\eps + \eps z$ of the periodic case. This allows us to adapt the result by \cite{Kohn_Vogelius} and obtain
\begin{align}\label{estimate.scaling}
\E\bigl[ \| \mu_\eps- C_0\1_D \|_{H^{-1}(D)}^2 \bigr]^{\frac 1 2} \lesssim k \eps + \E_\rho\biggl[ (\avsum_{i=1}^k \rho_i^{d-2} - \E_\rho \bigl[ \rho^{d-2} \bigr])^2 \biggr].
\end{align}
Here, the last term accounts for the difference between the average of $\mu_\eps$ in each cube of size $(k\eps)$, $k \in \N$ and the value $C_0$. This inequality, implies an estimate of the form:
\begin{align}
\E\bigl[ \| \mu_\eps- C_0\1_D \|_{H^{-1}(D)}^2 \bigr]^{\frac 1 2} \lesssim k \eps + \E_\rho\biggl[(\rho^{d-2} - \E_\rho\bigl[ \rho^{d-2} \bigr])^2 \biggr]k^{-\frac d 2}.
\end{align}
The optimal choice of $k$ yields the exponent $\frac{d}{d+2}$ of Theorem \ref{t.main}. If $\rho_z \equiv r$ for all $z \in \Z^d$, then the second term vanishes and the above estimate with $k=1$ gives the optimal rate of \cite{Kacimi_Murat}.

\smallskip

In the case of centres distributed according to a Poisson point process, the argument for Theorem \ref{t.main} follows the same ideas above;  although the centres of the holes in $H^\eps$ have random positions, their typical distance is indeed still of size $\eps$. This feature gives rise to the additional logarithmic factor in the rate of Theorem \ref{t.main}. The main technical challenge is related to the construction of the mesoscopic partition of $D$ that allows to obtain the analogue of \eqref{estimate.scaling}. In contrast with the case $\Phi=\Z^d$, indeed, there are ($\P$-sufficiently many) realizations of $H^\eps$ where the support of the measure $\mu_\eps$ intersects the boundary of the covering. In other words, the spheres $\{ \partial B_{\frac \eps 4}(\eps z) \}_{z\in \Phi^\eps_\delta(D)}$ might fall across two cubes of size $\eps k$ that cover $D$. This, in particular, implies that to the covering does not correspond a well-defined partition of the spheres where the measure $\mu_\eps$ is supported. We tackle this issue by constructing a suitable random covering.  We do this by enlarging each cube of size $\eps k$ so that it also includes the spheres $B_{\frac \eps 4}(\eps z)$ that fall on its boundary (see also Figure \ref{covering.pic}). In order to obtain the wanted rate of convergence, we require that the new sets have volume that is ``very close''  to the one of deterministic partition into cubes that is used in the case $\Phi= \Z^d$. We do so by restricting the size of the spheres that are too close to the boundary from size $\eps$ to size $\eps^{1+\kappa}$, where $\kappa =\kappa(d)>0$ is a suitable exponent. We refer to Subsection \ref{sub.covering} for the precise construction.

\smallskip

A second challenge that arises in the proof of Theorem \ref{t.main} is related to the presence of overlapping holes in the case $\beta < +\infty$ in \eqref{integrability.radii}. The strategy to deal with this issue is very similar to the one used in \cite{GHV}: We construct, indeed, a suitable partition of $H^\eps= H^\eps_b \cup H^\eps_g$, where the subset $H^\eps_b$ contains all the holes that overlap (c.f. Lemma \ref{l.geometry.periodic}). As shown in \cite{GHV}, the contribution of $H^\eps_b$ to the density of capacity is negligible in the limit $\eps \downarrow 0$. As a consequence, we may modify the estimates of \cite{Kacimi_Murat}, to prove that we may control the decay of $\| u_\eps- W_\eps u \|_{H^1_0(D)}$ with the decay of the norm $ \| \mu_\eps- C_0\1_D \|_{H^{-1}(D)}$, where the measure $\mu_\eps$ is now only related to the union of disjoint balls $H^\eps_g$.

\smallskip

\smallskip

\section{Proof of Theorem \ref{t.main}, $(a)$}\label{s.periodic}

\subsection{Partition of the holes $H^\eps$ and mesoscopic covering of $D$}\label{sub.covering}
This section  contains some technical tools that will be crucial to prove the main result: The first one is an adaptation of \cite{GHV} and provides a suitable way of dividing the holes $H^\eps$ between the ones that may overlap due to the unboundedness of the marks $\{ \rho_z\}_{z \in \Phi}$ and the ones that, instead, are disjoint and have radii $\aeps \rho_z$ much smaller than the distance $\eps$ between the centres.

\bigskip

\begin{lem}\label{l.geometry.periodic}  Let $\delta \in (0, \frac{2}{d-2}]$ be fixed. There exists an $\eps_0=\eps_0(\delta,d)$ such that for every $\eps \leq \eps_0$ and $\omega \in \Omega$ we may find a partition of the realization of the holes
$$
H^\eps:= H^\varepsilon_{g} \cup H^\varepsilon_{b}
$$
with the following properties:
\begin{itemize}
\item There exists a subset of centres $n^\eps(D) \subset \Phi^\eps(D)$ such that
\begin{align}\label{good.set.periodic}
H^\varepsilon_g: = \bigcup_{ z \in n^\varepsilon(D)} B_{\aeps \rho_z}( \varepsilon z ), \ \ \max_{z \in n^\eps(D)}\aeps \rho_z \leq \eps^{1+\delta};
\end{align}

\smallskip

\item There exists a set $D^\eps_b \subset  \{ x \in \R^3 \, \colon \, \mathop{dist}(x, D) \leq 2\}$ satisfying
\begin{align}
H^\varepsilon_{b} \subset D^\eps_b, \ \ \ \capacity ( H^\varepsilon_b,D_b^\eps) \lesssim  \eps^d \sum_{z \in \Phi^\eps(D) \backslash n^\eps(D)} \rho_z^{d-2} \label{capacity.sum.periodic}
\end{align}
and
\begin{align}\label{distance.good.bad.periodic}
B_{\frac{\eps}{4}}(\eps z) \cap D^\eps_b = \emptyset, \ \ \  \ \ \ \ \text{for every $z \in n^\eps(D)$.} 
\end{align}

\end{itemize}
\end{lem}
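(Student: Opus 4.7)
The plan is to adapt the greedy construction of \cite{GHV} so that good centres are, in addition to being isolated from the bad set, required to have subthreshold individual radius $\aeps \rho_z \leq \eps^{1+\delta}$. The key geometric observation in the lattice case $\Phi = \Z^d$ is that two balls of radius $\leq \eps^{1+\delta}$ centred at distinct lattice points of $\eps\Z^d$ are automatically disjoint --- their centres are $\geq \eps$ apart, whereas the combined radii are much smaller --- so every possible overlap or proximity issue must be driven by an initially oversized ball.

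Concretely, I would declare
$$\mathcal{B}_0 := \{z \in \Phi^\eps(D) : \aeps \rho_z > \eps^{1+\delta}\}$$
to be the initially bad centres and associate to each $z \in \mathcal{B}_0$ the enlarged region $\widetilde B_z := B_{2 \aeps\rho_z}(\eps z)$. A centre $w \in \Phi^\eps(D) \setminus \mathcal{B}_0$ is called \emph{chained} if $B_{\eps/4}(\eps w) \cap \bigcup_{z \in \mathcal{B}_0}\widetilde B_z \neq \emptyset$; denote the set of chained centres by $\mathcal{C}$ and set
$$n^\eps(D) := \Phi^\eps(D) \setminus (\mathcal{B}_0 \cup \mathcal{C}), \qquad D^\eps_b := \bigcup_{z \in \mathcal{B}_0}\widetilde B_z \;\cup\; \bigcup_{w \in \mathcal{C}} B_{2 \aeps\rho_w}(\eps w).$$
The properties \eqref{good.set.periodic} and \eqref{distance.good.bad.periodic} are then immediate: good radii are subthreshold by definition, while the lattice separation ensures that the $\eps/4$-ball of any good centre is disjoint from the chained fattenings (each of radius $\leq 2\eps^{1+\delta} \ll \eps/4$) as well as from the fattened initially-bad balls (by definition of $\mathcal{C}$). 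The inclusion $D^\eps_b \subset \{\dist(\cdot,D) \leq 2\}$ follows from the truncation $\aeps\rho_z \wedge 1$ in \eqref{def.holes} together with $\eps \leq \eps_0(\delta,d)$.

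For the capacity estimate \eqref{capacity.sum.periodic}, I would combine subadditivity of the relative $2$-capacity with the standard bound $\capacity(B_r, B_{2r}) \lesssim r^{d-2}$ for $d \geq 3$. Since $D^\eps_b$ contains a $B_{2\aeps\rho_z}(\eps z)$-neighborhood of every bad ball, monotonicity in the ambient set gives
$$\capacity(H^\eps_b, D^\eps_b) \leq \!\!\!\sum_{z \in \Phi^\eps(D) \setminus n^\eps(D)}\!\!\! \capacity\bigl(B_{\aeps\rho_z}(\eps z), D^\eps_b\bigr) \lesssim \!\!\!\sum_{z \in \Phi^\eps(D) \setminus n^\eps(D)}\!\!\! (\aeps\rho_z)^{d-2} = \eps^d \!\!\!\sum_{z \in \Phi^\eps(D) \setminus n^\eps(D)}\!\!\! \rho_z^{d-2}.$$
The main obstacle I anticipate is verifying that the single constant-factor fattening of the initially-bad balls already exhausts the bad set, i.e., that a ``chain of chains'' is impossible: otherwise the capacity sum would have to be re-accounted for further generations of centres, potentially breaking the clean single-ball bound. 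This is precisely where the lattice rigidity of $\Z^d$ is used: a chained ball has radius $\leq 2\eps^{1+\delta}$, its $\eps/4$-neighborhood then fits inside a ball of radius $\eps/4 + 2\eps^{1+\delta} < \eps/2$ around $\eps w$, and no other lattice centre (at scaled distance $\geq \eps$) can fall inside this range.
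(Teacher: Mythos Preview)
Your construction is essentially identical to the paper's: the sets $\mathcal{B}_0$, $\mathcal{C}$, $n^\eps(D)$, and $D^\eps_b$ coincide with the paper's $J^\eps_b$, $\tilde I^\eps_b$, $\Phi^\eps(D)\setminus I^\eps_b$, and $\bigcup_{z\in I^\eps_b}B_{2(\aeps\rho_z\wedge 1)}(\eps z)$, and the verifications of \eqref{good.set.periodic}--\eqref{distance.good.bad.periodic} via lattice separation plus subadditivity/monotonicity of capacity are the same. The only cosmetic slip is that your fattened balls $\widetilde B_z$ should carry the truncation $2(\aeps\rho_z\wedge 1)$ rather than $2\aeps\rho_z$ to make the inclusion $D^\eps_b\subset\{\dist(\cdot,D)\le 2\}$ literal, but you already flag this.
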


\bigskip

\begin{proof}[Proof of Lemma \ref{l.geometry.periodic}]
The construction for the sets $H^\eps_g, H^\eps_b$ and $D^\eps_b$ is the one implemented in the proof of  \cite[Lemma 2.2]{GHV}. We fix $\delta \in (0, \frac{2}{d-2}]$ throughout the proof.

\smallskip

We denote by $I_\eps^b \subset \Phi_\eps(D)$ the set that generates the holes $H_b^\eps$. We construct it in the following way: We first consider the points $z \in \Phi^\eps(D)$ whose marks $\rho_z$ are bigger than $\eps^{-\frac{2}{d-2}+\delta}$, namely 
\begin{align}\label{index.set.J}
J^\eps_b= \Bigl\{ z \in \Phi^\eps(D) \colon \aeps\rho_z \geq \eps^{1+\delta}\Bigr\}.
\end{align}
Given the holes
\begin{align*}
\tilde H^\eps_b := \bigcup_{z \in  J^\eps_b} B_{2 (\aeps \rho_z \wedge 1)}(\eps z),
\end{align*}
we include in $I_b^\eps$ also the set of points in $\Phi_\eps(D) \backslash J^\eps_b$ that are ``too close'' to the set $\tilde H_b^\eps$, i.e.
\begin{align}\label{index.set.I.tilde}
 \tilde I^\eps_b := \left \{ z \in \Phi^\eps(D) \backslash J^\eps_b \colon  \tilde H^\eps_b  \cap B_{\frac{\eps}{4}}(\eps z) \neq \emptyset \right\}.
\end{align}
 We define
 \begin{equation}
\begin{aligned}\label{definition.bad.index}
	I^\eps_b := \tilde I^\eps_b \cup J^\eps_b, \ \ \ n^\eps &(D) := \Phi^\eps(D) \backslash I^\eps_b \\
H^\eps_b:=  \bigcup_{z \in I^\eps_b} B_{\aeps \rho_z \wedge 1}(\eps z), \ \ \ H^\eps_g:=  \bigcup_{z_j \in n^\eps(D)}& B_{\aeps \rho_z}(\eps z), \ \ \ D^\eps_b:= \bigcup_{z \in I^\eps_b}  B_{2(\aeps \rho_z \wedge 1)}(\eps z). 
\end{aligned}
\end{equation}

\smallskip

It remains to show that the sets defined above satisfy properties \eqref{good.set.periodic}-\eqref{distance.good.bad.periodic}. Property \eqref{good.set.periodic} is an immediate consequence of definition \eqref{index.set.J}. The first inclusion in \eqref{capacity.sum.periodic} easily follows by the definition of $H^\eps_b$ and $D^\eps_b$ in \eqref{definition.bad.index}; for the the inequality in \eqref{capacity.sum.periodic} we instead appeal to the subadditivity of the capacity to bound
\begin{align*}
\capacity(H^\eps_b; D^\eps_b) \leq \sum_{z \in \Phi^\eps(D) \backslash n^\eps(D)} \capacity(B_{\aeps\rho_z \wedge 1}(\eps z); D^\eps_b).
\end{align*}
Moreover, by the monotonicity property $\capacity(A; B) \leq \capacity(A; C)$ for every $B \supseteq C \supseteq A$, this turns into
\begin{align*}
\capacity(H^\eps_b; D^\eps_b) \leq \sum_{z \in \Phi^\eps(D) \backslash n^\eps(D)} \capacity(B_{\aeps\rho_z}(\eps z); B_{2\aeps\rho_z}(\eps z)) \lesssim  \eps^d\sum_{z \in \Phi^\eps(D) \backslash n^\eps(D)} \rho_z^{d-2},
\end{align*}
i.e. the estimate in \eqref{capacity.sum.periodic}.

\smallskip

To conclude the proof of this lemma, it remains to argue \eqref{distance.good.bad.periodic}: By construction (see \eqref{definition.bad.index}), it holds that
\begin{align}\label{bad.set.periodic.1}
D^\eps_b = \tilde H_b^\eps \cup \bigcup_{z \in \tilde I^\eps_b}B_{2\aeps \rho_z}(\eps z).
\end{align}
On the one hand, by the definition of $n^\eps(D)$ in \eqref{definition.bad.index} and \eqref{index.set.I.tilde}, for each $z \in n^\eps(D)$ we have that
\begin{align}\label{distance.periodic.1}
\mathop{dist}(\eps z; \tilde H_b^\eps) \geq \frac{\eps}{4}.
\end{align} 
On the other hand, again by \eqref{index.set.J}-\eqref{index.set.I.tilde}, if $w \in \tilde I^\eps_b$,  then $4\aeps \rho_w \leq \eps^{1+\delta}$ so that
\begin{align*}
\mathop{dist}(\eps z; B_{2 \aeps \rho_w}(\eps w)) \geq \frac \eps 2 |z -w| \geq \frac \eps 4,
\end{align*}
whenever $\eps$ is such that $\eps^\delta < \frac 1 4$. Hence, also
\begin{align*}
\mathop{dist}(\eps z;  \bigcup_{z \in \tilde I^\eps_b}B_{2\aeps \rho_z}(\eps z)) \geq \frac{\eps}{4}.
\end{align*} 
Combining this with \eqref{distance.periodic.1} and \eqref{bad.set.periodic.1}, we infer \eqref{distance.good.bad.periodic}. The proof of Lemma \ref{l.geometry.periodic} is complete.
\end{proof}

\bigskip

We now construct a suitable covering of $D$ that, as explained in Subsection \ref{sub.ideas}, plays a fundamental role in the proof of Theorem \ref{t.main}. We recall that, by our assumption, the set $D$ is any smooth domain that is star-shaped with respect to the origin. 

\smallskip

For $k \in \N$ and $z \in \Z^d$, let
\begin{equation}\label{cubes.meso}
Q_{k,z}:= \eps z +\frac {k\eps}{2} Q, \ \ \ Q := [-1 ; 1]^d.
\end{equation}
Let $N_k \subset \Z^d$ be such that the collection $\{ Q_{k, z} \}_{z \in N_k}$ is an essentially disjoint covering of $D$. Since $D$ is bounded, we may assume that 
\begin{align}\label{number.N.k}
\# (N_k) \lesssim (\eps k)^{-d}.
\end{align}
Let
\begin{align}\label{interior.cubes}
\mathring{N}_{k}:= \biggl\{ z \in N_k \, \colon \, Q_{k, z} \subseteq D, \, \mathop{dist}(Q_{k,z}; \partial D) \geq \eps \biggr\}.
\end{align}
Since $D$ is smooth and has compact boundary, it is easy to see that there exist $C_1=C_1(D)$ such that, whenever $k\eps \leq C$ it holds
\begin{align}\label{number.boundary.cubes}
\#({N}_{k} \backslash \mathring{N}_k) \lesssim (k\eps)^{d-1}.  
\end{align}
Finally, for each $z \in N_k$ we denote by $N_{k,z} \subset \Phi$ the set of points of $\Phi_\delta^\eps(D)$ that, when rescaled, are contained into the cube $Q_{\eps,z}$, i.e. such that
\begin{align}\label{number.covering}
N_{k, z}:= \{ w \in \Phi^\eps_\delta(D) \, \colon \, \eps w \in Q_{k,z} \} \stackrel{\eqref{notation.psi}}{=} \Phi^\eps_\delta(D) \cap \Phi^\eps(Q_{\eps,z})
\end{align}
Note that, since in this section we assumed that $\Phi=\Z^d$, it follows that
$$
\bigcup_{w \in N_{k,z}} Q_{\eps, w} \subset Q_{k,z}, \ \ \ \text{for every $z \in N_k$}
$$
and that, for every $z \in \mathring{N}_{k,z}$, the sets $\{Q_{\eps,w}\}_{w \in N_{k,z}}$ provide a refining of $Q_{k,z}$.

\bigskip

\subsection{Quenched estimates for the homogenization error}\label{sub.quenched.periodic}
All the results contained in this subsection are quenched, in the sense that they hold for any fixed realization of the holes $H^\eps$.  The main result of this section is Lemma \ref{det.estimate} that allows to control the norm of the homogenization error $u_\eps- W_\eps u$ in terms of suitable averaged sums of the random marks $\{\rho_z \}_{z \in \Phi}$. Lemma \ref{det.estimate} relies on Lemma \ref{det.KM}, that is an adaptation of \cite{Kacimi_Murat}[Theorem 3.2] and shows that controlling the error $u_\eps- W_\eps u$ considered in Theorem \ref{t.main} boils down to controlling the convergence to $C_0$ of the density of capacity generated by $H^\eps$. This, in turn, may be controlled using the mesoscopic covering $\{ Q_{k,z} \}_{z \in N_k}$ of the previous subsection with Lemma \ref{Kohn_Vogelius} of Section \ref{Aux}.

\bigskip

Before giving the statement of the first lemma, we recall the construction of the oscillating test function $w_\eps \in H^1(D)$ implemented in \cite{GHV}. As mentioned in the introduction and in Subsection \ref{sub.ideas}, the main feature of this function is to vanish on the holes $H^\eps$ and ``approximate'' the density of  the capacity of $H^\eps$. We note that the unboundedness of the marks $\{ \rho_z\}_{z\in \Phi}$ implies that the set $\Phi^\eps_\delta(D) \subsetneq \Phi^\eps(D)$  and that the function $W_\eps$ in \eqref{corrector} does not vanish in all the holes contained in $H^\eps$. 

\smallskip

Let $H^\eps_g, H^\eps_b$ and $D^\eps_b$ be as in Lemma \ref{l.geometry.ppp}. For every $z \in \Phi^\eps(D)$, we set
\begin{align}
v_\eps:= \mathop{argmin}\{  \int_{D_b^\eps} |\nabla u |^2 \ \colon \, u \in H^1_0(D_b^\eps), \  u = 1 \ \ \text{on $\partial H_b^\eps$} \},
\end{align}
i.e. the minimizer of $\capacity( H^\eps_b; D^\eps_b)$.\footnote{We assume that the minimizer exists. If this is not the case, then it suffice to take any function $v_\eps$ in the minimizing class such that $ \int_{D_b^\eps} |\nabla v_\eps |^2 \leq 2 \capacity( H^\eps_b; D^\eps_b)$.} 

\smallskip

We set as oscillating test function  
\begin{align}\label{oscillating.function}
w_\eps = w_\eps^g \wedge w_\eps^b
\end{align}
where $w_\eps^g$ and $w_\eps^b$ are defined as follows: 
\begin{align}\label{oscillating.bad}
w_\eps^b &:=\begin{cases} 1 - v_\eps \ &\text{in $D^\eps_b \backslash H^\eps_b$}\\
0 \ &\text{in $H^\eps_b$}\\
1 \ &\text{in $\R^3 \backslash D^\eps_b$}
\end{cases}
\end{align}
and 
\begin{align}\label{oscillating.good}
w_\eps^g(x) :=\begin{cases} w_{z,\eps} \ \ \ &\text{if $ x \in B_{\frac \eps 4}(\eps z_i) \backslash B_{\aeps\rho_z}(\eps z)$, for some $z \in n_\eps(D)$}\\
0  \ \ \ &\text{if $ x \in B_{\eps^3\rho_i}(\eps z_i)$, for some $z_i \in n_\eps(D)$}\\
1 \ \ \ \ \ \ &\text{ otherwise}\\
\end{cases}
\end{align}
For each $z \in n_\eps(D)$, the function $w_{z,\eps}$ is as in \eqref{def.harmonic.annuli}. We remark that each $w_{\eps,z}$ admits the explicit formulation
 \begin{align}\label{explicit.formula}
w_{z,\varepsilon}(x) &= \frac{(\aeps\rho_z)^{-(d-2)}- |x-  \eps z_i|^{-(d-2)}}{(\aeps\rho_z)^{-(d-2)} -(\frac \eps 4)^{-(d-2)}} \ \ \ \text{in $B_{\frac \eps 4}(\eps z) \backslash B_{\aeps\rho_z}(\eps z)$.}
\end{align}

\smallskip

For $k \in \N$, let $\{ Q_{k,z}\}_{z\in N_k}$ be as in the previous subsection. For every $z \in N_k$, we define the random variables
\begin{align}\label{averaged.sum}
S_{k,z}:= \frac{1}{k^d}\sum_{w \in N_{k,z}} Y_{\eps, w} \ \ \ \ \ Y_{\eps,w}:= \rho_w^{d-2} \frac{1}{1- 4^{d-2}\eps^{2} \rho_w^{d-2}}.
\end{align}

\bigskip

\begin{lem}\label{det.estimate}
Let $\delta \in (0, \frac{2}{d-2}]$ be fixed. Then for every $\eps >0$ and $k \in \N$ with $ k\eps \leq 1$ the following inequality holds: If $u_\eps, u$ are as in Theorem \ref{t.main} and $W_\eps, w_\eps$ as in \eqref{corrector} and \eqref{oscillating.function}, respectively,  then
\begin{align*}
\| u_\varepsilon - W_\varepsilon u \|_{H^1_0(D)} &\lesssim \biggl( (k \eps)^2 \eps^d \sum_{z \in \Phi_{\delta}^\eps(D)} \rho_z^{2(d-2)}  +  \eps^d \sum_{z \in \Phi^\eps(D) \backslash n^\eps(D)} \rho_z^{(d-2)}\biggr)^{\frac 1 2}\\
&\quad + \biggl(\avsum_{z \in \mathring{N}_k} (S_{k,z} - \E_\rho \bigl[ \rho^{d-2} \bigr])^2 + (k\eps)^3 \avsum_{z \in N_k \backslash \mathring{N}_{k}} (S_{k,z}  - \E_\rho \bigl[ \rho^{d-2} \bigr])^2  \biggr)^{\frac 1 2}.
\end{align*}
\end{lem}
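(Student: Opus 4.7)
The plan is to decompose the homogenization error as
\begin{equation*}
u_\eps - W_\eps u = (u_\eps - w_\eps u) + (w_\eps - W_\eps) u,
\end{equation*}
and estimate the two pieces with different tools. For the second piece, the key observation is that the two oscillating functions agree on every ball/annulus indexed by $n^\eps(D)$: indeed, \eqref{distance.good.bad.periodic} ensures that for $z \in n^\eps(D)$ the annulus $B_{\eps/4}(\eps z) \setminus B_{\aeps \rho_z}(\eps z)$ does not intersect $D^\eps_b$, so there $w_\eps^b \equiv 1$ and $w_\eps = w_\eps^g = w_{z,\eps} = W_\eps$. The discrepancy $w_\eps - W_\eps$ is therefore supported on the ``intermediate'' annuli indexed by $\Phi^\eps_\delta(D)\setminus n^\eps(D)$ and on $D^\eps_b$; its Dirichlet energy is controlled by $\capacity(H^\eps_b;D^\eps_b)$ plus the sum of the capacities of the harmonic annuli around the intermediate centres, each $\lesssim\eps^d\rho_z^{d-2}$. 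Through \eqref{capacity.sum.periodic} both contributions are bounded by $\eps^d\sum_{z\in\Phi^\eps(D)\setminus n^\eps(D)}\rho_z^{d-2}$, and multiplying by $\|u\|_{W^{1,\infty}(D)}\lesssim 1$ (elliptic regularity on the smooth domain $D$ with $f\in L^\infty(D)$) produces the second term of the lemma.

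For the main piece $u_\eps - w_\eps u$, I would invoke Lemma~\ref{det.KM} — the adaptation of \cite[Theorem 3.2]{Kacimi_Murat} to our random setting. The classical argument tests the weak formulation of \eqref{P.eps} against $u_\eps - w_\eps u \in H^1_0(D^\eps)$, substitutes the equation \eqref{P.hom} for $u$, and integrates by parts on each annulus $B_{\eps/4}(\eps z)\setminus B_{\aeps\rho_z}(\eps z)$, $z\in n^\eps(D)$, where the explicit harmonic profile \eqref{explicit.formula} is available. The outcome is that $\|u_\eps - w_\eps u\|_{H^1_0(D)}$ is controlled by the $H^{-1}(D)$-norm of $\mu_\eps - C_0\mathbf{1}_D$, where $\mu_\eps$ is the surface measure carried by the good spheres $\partial B_{\eps/4}(\eps z)$, $z\in n^\eps(D)$, with density $\partial_n w_{z,\eps}$. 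Again \eqref{distance.good.bad.periodic} is essential: it guarantees that the spheres carrying $\mu_\eps$ are disjoint from $D^\eps_b$, so the energy identity decouples cleanly from the bad set. A direct computation from \eqref{explicit.formula} gives
\begin{equation*}
\int_{\partial B_{\eps/4}(\eps z)}\partial_n w_{z,\eps}\,\mathrm{d}\mathcal{H}^{d-1} = c_d\,\eps^d\,Y_{\eps,z},
\end{equation*}
with $Y_{\eps,z}$ as in \eqref{averaged.sum}.

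The analytic core is the estimate of $\|\mu_\eps - C_0\mathbf{1}_D\|_{H^{-1}(D)}$ via the mesoscopic covering $\{Q_{k,z}\}_{z\in N_k}$ of Subsection~\ref{sub.covering}. I split
\begin{equation*}
\mu_\eps - C_0\mathbf{1}_D = (\mu_\eps - \mu_\eps^{\mathrm{avg}}) + (\mu_\eps^{\mathrm{avg}} - C_0\mathbf{1}_D),
\end{equation*}
where $\mu_\eps^{\mathrm{avg}}$ is piecewise constant, equal to $c_d S_{k,z}$ on $Q_{k,z}$. By construction $\mu_\eps - \mu_\eps^{\mathrm{avg}}$ has zero mean on every cube, so Lemma~\ref{Kohn_Vogelius} applied at scale $k\eps$ gives
\begin{equation*}
\|\mu_\eps - \mu_\eps^{\mathrm{avg}}\|_{H^{-1}(D)}^2 \lesssim (k\eps)^2\,\eps^d\sum_{z\in\Phi^\eps_\delta(D)} Y_{\eps,z}^2 \lesssim (k\eps)^2\,\eps^d\sum_{z\in\Phi^\eps_\delta(D)}\rho_z^{2(d-2)},
\end{equation*}
where in the last step I used that $\aeps\rho_z\le\eps^{1+\delta}$ forces $4^{d-2}\eps^2\rho_z^{d-2}\le \tfrac 1 2$ for $\eps$ small, and hence $Y_{\eps,z}\lesssim\rho_z^{d-2}$. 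This yields the first term of the lemma.

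For the piecewise constant $\mu_\eps^{\mathrm{avg}} - C_0\mathbf{1}_D$, which equals $c_d(S_{k,z}-\E_\rho[\rho^{d-2}])$ on $Q_{k,z}\cap D$, I treat interior and boundary cubes separately. On interior cubes, the trivial embedding $\|\cdot\|_{H^{-1}(D)}\le\|\cdot\|_{L^2(D)}$ combined with \eqref{number.N.k} produces $\avsum_{z\in\mathring{N}_k}(S_{k,z}-\E_\rho[\rho^{d-2}])^2$. On boundary cubes, the function is supported in a tubular layer of width $\lesssim k\eps$ around $\partial D$, so for any $\phi\in H^1_0(D)$ the Poincaré inequality (valid since $\phi=0$ on $\partial D$) gives $\|\phi\|_{L^2(\mathrm{layer})}\lesssim k\eps\,\|\nabla\phi\|_{L^2(D)}$; this improves the $L^2\hookrightarrow H^{-1}$ embedding by a factor $k\eps$ and, together with \eqref{number.boundary.cubes}, accounts for the factor $(k\eps)^3$ in front of $\avsum_{N_k\setminus\mathring{N}_k}(S_{k,z}-\E_\rho[\rho^{d-2}])^2$. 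The main obstacle in this programme is the bookkeeping of the error terms inside Lemma~\ref{det.KM}: one has to verify that every boundary contribution produced by the $w_\eps^g\wedge w_\eps^b$ construction and by the geometry of $D^\eps_b$ either vanishes by the separation \eqref{distance.good.bad.periodic} or is absorbed into the capacity estimate \eqref{capacity.sum.periodic}.
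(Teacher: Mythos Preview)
Your proposal is correct and follows essentially the same route as the paper: invoke Lemma~\ref{det.KM}, bound $\|\nabla(w_\eps-W_\eps)\|_{L^2}$ and $\|w_\eps-1\|_{L^2}$ via the capacity estimate \eqref{capacity.sum.periodic}, and control $\|\mu_\eps-C_0\|_{H^{-1}(D)}$ by splitting into $(\mu_\eps-m_k)+(m_k-C_0)$, applying Lemma~\ref{Kohn_Vogelius} to the first piece and the interior/boundary-layer Poincar\'e argument to the second.

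One small inaccuracy worth correcting: you describe $\mu_\eps$ as carried by the spheres indexed by $n^\eps(D)$, but in Lemma~\ref{det.KM} (and in your own subsequent application of Lemma~\ref{Kohn_Vogelius}) the measure is supported on $\{\partial B_{\eps/4}(\eps z)\}_{z\in\Phi_\delta^\eps(D)}$, consistently with the definition \eqref{averaged.sum} of $S_{k,z}$ via $N_{k,z}\subset\Phi_\delta^\eps(D)$. The point is that Lemma~\ref{det.KM} smuggles in $-\Delta W_\eps$ (built on $\Phi_\delta^\eps(D)$), not $-\Delta w_\eps^g$ (built on $n^\eps(D)$); the gap between the two index sets is precisely what the term $\|\nabla(w_\eps-W_\eps)\|_{L^2}^2$ absorbs. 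With that indexing fixed, your argument matches the paper's.
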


\bigskip

The next lemma is a simple adaptation of \cite{Kacimi_Murat} to our definition of corrector $W_\eps$ and oscillating test function $w_\eps$:

\begin{lem}\label{det.KM}
Let $\delta \in (0; \frac{2}{d-2})$ be fixed; let $u_\eps, u, w_\eps$ and $W_\eps$ be as in Lemma \ref{det.estimate}. Then
\begin{align*}
\| u_\varepsilon - W_\varepsilon u \|_{H^1_0(D)}^2 \lesssim  \| w_\eps - 1\|_{L^2(D)}^2 + \|\nabla(w_\eps- W_\eps) \|_{L^2(\R^d)}^2 + \| \mu_\eps - C_0 \|_{H^{-1}(D)}^2,
\end{align*}
with
\begin{align}\label{def.mu.eps}
\mu_\eps := \sum_{z \in \Phi_\delta^\eps(D)} \partial_n w_{\eps,z} \, \delta_{\partial B_{\frac \eps 4}(\eps z)}.
\end{align}
\end{lem}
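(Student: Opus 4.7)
The approach adapts the energy method of \cite{Kacimi_Murat} to the present setting with two distinct corrector functions $w_\eps$ and $W_\eps$. The plan is to first reduce the estimate to a bound on $\phi := u_\eps - w_\eps u \in H^1_0(D^\eps)$, then derive an energy identity for $\phi$ by testing the equations for $u_\eps$ and $u$ against appropriate test functions, and finally read off the three contributions claimed in the lemma.

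To begin, I would split $u_\eps - W_\eps u = \phi + (w_\eps - W_\eps) u$. Since $w_\eps = 0$ on all of $H^\eps$ by the construction $w_\eps = w_\eps^g \wedge w_\eps^b$, indeed $\phi \in H^1_0(D^\eps)$. The difference $w_\eps - W_\eps$ is supported only in $D^\eps_b$ and in the small balls $\{B_{\eps/4}(\eps z)\}_{z \in \Phi^\eps(D) \setminus \Phi^\eps_\delta(D)}$, on each of which Poincaré's inequality yields $\|w_\eps - W_\eps\|_{L^2} \lesssim \eps \|\nabla(w_\eps - W_\eps)\|_{L^2}$. Combined with the $W^{1,\infty}$-regularity of $u$ (standard elliptic regularity on the smooth domain $D$ with $f \in L^\infty$), this directly gives $\|(w_\eps - W_\eps) u\|_{H^1_0(D)}^2 \lesssim \|\nabla(w_\eps - W_\eps)\|_{L^2}^2$.

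For $\phi$, I would test $-\Delta u_\eps = f$ against $\phi$ and, simultaneously, test the homogenized equation $-\Delta u + C_0 u = f$ against the admissible function $w_\eps \phi \in H^1_0(D)$. Expanding $\nabla(w_\eps u) = w_\eps \nabla u + u \nabla w_\eps$ and invoking the distributional identity $\int \nabla w_\eps \cdot \nabla (u\phi) = \langle \mu_\eps, u\phi \rangle + E_{\mathrm{bad}}$, which follows because $w_\eps$ is harmonic in each annulus $A_z = B_{\eps/4}(\eps z) \setminus B_{\aeps \rho_z}(\eps z)$, the inner-sphere contributions being killed by $u\phi \equiv 0$ on $H^\eps$, and then rewriting $\int (f - C_0 u)(1-w_\eps)\phi$ via integration by parts against $(1 - w_\eps)\phi \in H^1_0(D)$, one arrives at the identity
\begin{equation*}
\|\nabla \phi\|_{L^2}^2 = \int (1 - w_\eps)\nabla u \cdot \nabla \phi + \langle C_0 \1_D - \mu_\eps, u\phi \rangle + \tilde R_\eps,
\end{equation*}
where $\tilde R_\eps$ groups the bad-set contributions. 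Each term is then estimated in the expected way: Cauchy-Schwarz and Young's inequality on the first integral yield $\tfrac12 \|\nabla \phi\|_{L^2}^2 + C \|u\|_{W^{1,\infty}}^2 \|w_\eps - 1\|_{L^2}^2$; the duality bracket is handled by $|\langle C_0 \1_D - \mu_\eps, u\phi\rangle| \leq \|\mu_\eps - C_0 \1_D\|_{H^{-1}(D)} \|u\phi\|_{H^1_0(D)}$, where Poincaré and the $W^{1,\infty}$-bound on $u$ give $\|u\phi\|_{H^1_0(D)} \lesssim \|\nabla \phi\|_{L^2}$; and $\tilde R_\eps$ is controlled by $\capacity(H^\eps_b; D^\eps_b) = \|\nabla v_\eps\|_{L^2}^2$, which is exactly $\|\nabla(w_\eps - W_\eps)\|_{L^2(D^\eps_b)}^2$ since $w_\eps - W_\eps = -v_\eps$ in $D^\eps_b \setminus H^\eps_b$.

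The main obstacle is the bookkeeping of boundary and cross terms in the energy identity. The piecewise definition of $w_\eps$ produces distributional contributions on the inner spheres $\partial B_{\aeps \rho_z}$ (absorbed by $\phi \equiv 0$ on $H^\eps$), on the outer spheres $\partial B_{\eps/4}(\eps z)$ (assembled into $\mu_\eps$), and on $\partial D^\eps_b$ (yielding $\tilde R_\eps$); in addition, the cross term $\int \phi \nabla u \cdot \nabla w_\eps$ must be controlled via a Poincaré-type inequality on each annulus $A_z$ using $\phi|_{\partial B_{\aeps \rho_z}} = 0$, together with the fact that $\sum_{z \in \Phi^\eps_\delta(D)} \|\nabla w_{\eps,z}\|_{L^2(A_z)}^2 \simeq \capacity(H^\eps_g)$ is uniformly bounded (exploiting the restriction $\delta \leq 2/(d-2)$). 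Verifying that all such terms either combine into the three target quantities or absorb into $\tfrac12 \|\nabla \phi\|_{L^2}^2$ is the delicate part of the argument.
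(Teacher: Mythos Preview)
Your overall strategy coincides with the paper's: split $u_\eps - W_\eps u = (u_\eps - w_\eps u) + (w_\eps - W_\eps)u$, derive an energy identity for $\phi = u_\eps - w_\eps u \in H^1_0(D^\eps)$, and read off the three contributions. However, your bookkeeping around $\mu_\eps$ misses the key step that makes the argument close.

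The measure $\mu_\eps$ in the statement is a sum over $\Phi^\eps_\delta(D)$, whereas the harmonic pieces of $w_\eps^g$ live only over the strictly smaller set $n^\eps(D)$. Consequently the identity you write, $\int \nabla w_\eps \cdot \nabla(u\phi) = \langle \mu_\eps, u\phi\rangle + E_{\mathrm{bad}}$, is not correct as stated: what $w_\eps$ produces on the outer spheres is $\sum_{z\in n^\eps(D)}\partial_n w_{\eps,z}$, not $\mu_\eps$. The paper resolves this by adding and subtracting $(-\Delta W_\eps)u$ in the weak formulation \emph{before} testing; since $-\Delta W_\eps$ is built from the full set $\Phi^\eps_\delta(D)$ and $\phi$ vanishes on all of $H^\eps$, the inner-sphere contributions drop and one obtains directly $\langle \Delta W_\eps; u\phi\rangle = -\langle \mu_\eps; u\phi\rangle$. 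The price of this substitution is exactly the single term $\int \nabla(W_\eps - w_\eps)\cdot\nabla(u\phi)$, which is bounded by $\|\nabla(w_\eps - W_\eps)\|_{L^2}\,\|u\phi\|_{H^1_0}$ and accounts simultaneously for the missing annuli $z\in\Phi^\eps_\delta(D)\setminus n^\eps(D)$ and for the bad-set part. Your description of $E_{\mathrm{bad}}$ as ``exactly $\|\nabla(w_\eps - W_\eps)\|_{L^2(D^\eps_b)}^2$ since $w_\eps - W_\eps = -v_\eps$ in $D^\eps_b\setminus H^\eps_b$'' is also inaccurate, because $W_\eps$ need not equal $1$ throughout $D^\eps_b$ (balls with $z\in\Phi^\eps_\delta(D)\setminus n^\eps(D)$ may intersect it); the substitution trick above avoids having to disentangle this.

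A second, smaller point: your proposed treatment of the cross term $\int \phi\,\nabla u\cdot\nabla w_\eps$ via Poincar\'e on each annulus together with the uniform capacity bound would only yield a contribution of order $\eps\|\nabla\phi\|_{L^2}$, hence after Young an extra $\eps^2$ that is not among the three target quantities. The paper avoids this entirely by rewriting the term in divergence form, $2\nabla u\cdot\nabla w_\eps = -2\nabla\!\cdot\!\bigl((1-w_\eps)\nabla u\bigr) + 2(1-w_\eps)\Delta u$, so that after testing against $\phi$ it is controlled purely by $\|1-w_\eps\|_{L^2}$ and $\|u\|_{W^{2,\infty}}$.
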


\bigskip

\begin{proof}[Proof of Lemma \ref{det.estimate}]
The statement follows from Lemma \ref{det.KM}, provided that we show that
\begin{align}\label{L.2.norm}
\| \nabla(w_\eps - W_\eps) \|_{L^2(D)}^2 + \| w_{\eps} - 1 \|_{L^2(D)}^2  \lesssim  \eps^{d+2} \sum_{z \in n_\eps(D)} \rho_z^{d-2} + \eps^d \sum_{\Phi^\eps(D) \backslash n^\eps(D)} \rho_z^{d-2}
\end{align}
and that for every $\eps >0$ and $k \in \N$ such that $k \eps \leq 1$ 
\begin{equation} \label{H.minus}
\begin{aligned}
 \| \mu_\eps - C_0 \|_{H^{-1}(D)}^2 &\lesssim (k\eps)^2 \eps^d \sum_{z \in \Phi_{\delta}^\eps(D)} \rho_z^{2(d-2)} \\
 &\quad \quad + \avsum_{z \in \mathring{N_k}} (S_{k,z} - \E_\rho \bigl[ \rho^{d-2} \bigr])^2 + (k\eps)^3 \avsum_{z \in N_k \backslash \mathring{N}_{k}} (S_{k,z}  - \E_\rho \bigl[ \rho^{d-2} \bigr])^2 .
 \end{aligned}
 \end{equation}

\smallskip

We first argue \eqref{L.2.norm}: By definition \eqref{oscillating.bad} for $w_b^\eps$ and Lemma \ref{l.geometry.periodic}, we have that
\begin{align}\label{grad.w.bad}
\| \nabla w_b^\eps \|_{L^2(\R^d)}^2 \lesssim \eps^d \sum_{\Phi^\eps(D) \backslash n^\eps(D)} \rho_z^{d-2}.
\end{align}
Since by Lemma \ref{l.geometry.periodic}  the sets $\bigcup_{z \in n^\eps(D)} B_{\frac \eps 4}(\eps z)$ and $D_b^\eps$ are disjoint, we appeal to \eqref{oscillating.function} to estimate
\begin{align}\label{L.2.norm.1}
\| w_\eps - 1 \|_{L^2(D)}^2 = \sum_{z_i \in n_\eps(D)} \| w_\eps^g - 1 \|_{L^2(B_{\frac \eps 4}(\eps z_i))}^2 + \| w_\eps^b -1 \|_{L^2(D_\eps^b \cap D)}^2.
\end{align}
The function $w_\eps^g-1$ vanishes on $\bigcup_{z \in n_\eps(D)} \partial B_{\frac \eps 4}(\eps z )$: Since the balls $\{ B_{\frac{\eps}{4}}(\eps z)\}_{z \in n^\eps(D)}$ are all disjoint, Poincar\'e's inequality in each ball $B_{\frac \eps 4}(\eps z)$ yields
\begin{align*}
\| w_\eps^g - 1 \|_{L^2(D)}^2 \lesssim \eps^2 \sum_{z \in n_\eps(D)} \| \nabla w_\eps^g \|_{L^2(B_{\frac \eps 4}(\eps z))}^2.
\end{align*}
Using definition \eqref{oscillating.good}, we may rewrite
\begin{align*}
\| w_\eps^g - 1 \|_{L^2(D)}^2 \lesssim \eps^{d+2}  \sum_{z \in n_\eps(D)} \rho_z^{d-2},
\end{align*}
and, inserting this into \eqref{L.2.norm.1}, also
\begin{align}\label{L.2.norm.2}
\| w_\eps - 1 \|_{L^2(D)}^2 = \eps^{d+2}  \sum_{z \in n_\eps(D)} \rho_z^{d-2} + \| w_\eps^b -1 \|_{L^2(D_\eps^b \cap D)}^2.
\end{align}
To conclude the proof of \eqref{L.2.norm} for $w_\eps -1$, it thus remains to estimate the last term on the right-hand side. By construction  (c.f. \eqref{oscillating.bad}), it holds $w_\eps^b -1 = 0 $ on $\partial D_\eps^b$; appealing to Lemma \ref{l.geometry.periodic}, we also have that $D^\eps_b \subset \{ x \in \R^3 \, \colon \, \mathop{dist}(x, D) \leq 2 \}$. We thus apply Poincar\'e's inequality in this set and conclude that
\begin{align*}
\| w_\eps^b - 1 \|_{L^2(D_\eps^b \cap D)}^2 \lesssim  \|\nabla w_\eps^b \|_{L^2(D_\eps^b)}^2 \stackrel{\eqref{grad.w.bad}}{\lesssim}  \eps^d \sum_{\Phi^\eps(D) \backslash n^\eps(D)} \rho_z^{d-2}.
\end{align*}
To establish  \eqref{L.2.norm} for $w_\eps -1$, it only remains to combine this last inequality with \eqref{L.2.norm.2}.

\smallskip

We now argue \eqref{L.2.norm} for $\nabla(w_\eps - W_\eps)$:  By definition \eqref{thinning.psi} and \eqref{good.set.periodic} of Lemma \ref{l.geometry.periodic}, it holds
\begin{align}\label{inclusion.set}
n^\eps(D) \subset \Phi^\eps_\delta(D).
\end{align}
Thanks to definition \eqref{oscillating.function} for $w_\eps$ and the fact that, by Lemma \ref{l.geometry.periodic} the support of $\nabla w_g^\eps$ and $\nabla w_b^\eps$ is disjoint, we use the triangle inequality to infer that
\begin{align}\label{comparison.periodic}
\| \nabla ( w_\eps - W_\eps ) \|_{L^2(D)}^2 &\lesssim \| \nabla ( w_g^\eps- W_\eps ) \|_{L^2(D)}^2 + \| \nabla w_b^\eps\|_{L^2(D)}^2\\
& \stackrel{\eqref{grad.w.bad}}{\lesssim} \| \nabla ( w_g^\eps- W_\eps ) \|_{L^2(D)}^2 + \eps^d \sum_{z \in \Phi^\eps(D) \backslash n^\eps(D)} \rho_z^{d-2}.
\end{align}
Comparing definition \eqref{oscillating.good} for $w_g^\eps$ with definition \eqref{corrector} for $W_\eps$ and using inclusion \eqref{inclusion.set}, we observe that
\begin{align*}
\nabla (w_g^\eps - W_\eps) = \sum_{\Phi^\eps_\delta(D) \backslash n^\eps(D)} \nabla W_\eps \1_{B_{\frac \eps 4}(\eps z)}.
\end{align*}
Since the balls $\{ B_{\frac \eps 4}(\eps z) \}_{z \in  \Phi^\eps_\delta(D)}$ are disjoint, the previous identity and the triangle inequality imply that
\begin{align*}
\|\nabla (w_g^\eps - W_\eps)\|_{L^2(D)}^2& \lesssim  \sum_{\Phi^\eps_\delta(D) \backslash n^\eps(D)} \|\nabla w_{\eps,z} \|_{L^2(B_{\frac \eps 4}(\eps z))}^2\\
&\stackrel{\eqref{def.harmonic.annuli}}{=}  \sum_{\Phi^\eps_\delta(D) \backslash n^\eps(D)} \capacity(B_{\aeps\rho_z}(\eps z) ; B_{\frac \eps 4}(\eps z)) \stackrel{\eqref{thinning.psi}}{\lesssim}  \eps^d \sum_{\Phi^\eps(D) \backslash n^\eps(D)}\rho_z^{d-2}.
\end{align*}
Inserting this bound into \eqref{comparison.periodic} yields \eqref{L.2.norm} also for the norm of $\nabla(w_\eps- W_\eps)$.

\bigskip

We now turn to \eqref{H.minus} and claim that we may apply Lemma \ref{Kohn_Vogelius} with $M= \mu_\eps$, $\mathcal{Z}= \{ \eps w \}_{w \in \Phi^\eps_\delta(D)}$, $\mathcal{X}=\{ \aeps \rho_w\}_{w \in \Phi^\eps_\delta(D)}$ and $r_w\equiv \frac \eps 4$ for every $w \in \Phi^\eps_\delta(D)$. We use as covering $\{K_j\}_{j \in J}$ the sets $\{ Q_{k, z}\}_{z \in N_k}$. Conditions \eqref{well.defined.cells}  and \eqref{contains.balls} are satisfied thanks to \eqref{thinning.psi} and by construction (see Subsection \ref{sub.covering}), respectively. Appealing to Lemma \ref{Kohn_Vogelius}, we therefore have that
\begin{align*}
\|  \mu_\eps - m_k \|_{H^{-1}(D)}^2 \lesssim  (k\eps)^2 \eps^d \sum_{z \in \Phi_{\delta}^\eps(D)} \rho_z^{d-2}, \ \ \ m_k \stackrel{\eqref{averaged.sum}}{ =} c_d \sum_{z \in N_{k}} S_{k,z}  \1_{Q_{k, z}}.
\end{align*}

By the triangle inequality and the previous estimate, we thus bound
\begin{align}\label{H.minus.1}
\| \mu_\eps - C_0 \|_{H^{-1}(D)}^2 \leq (\eps k)^2 \eps^d \sum_{z \in \Phi_{\delta}^\eps(D)} \rho_z^{2(d-2)} +  \| m_k - C_0 \|_{H^{-1}(D)}^2
\end{align}
so that, to prove \eqref{H.minus}, it only remains to control the last term on the right-hand side above. We do this by observing that for each $\phi \in H^1_0(D)$ we have
\begin{align}
|\langle m_k - m ; \phi \rangle | \simeq  | \sum_{z \in N_k} (S_{k,z}  -\E_\rho \bigl[ \rho^{d-2} \bigr]) \int_{Q_{k,z} \cap D}\phi |
\end{align}
and, by the triangle inequality, also
\begin{equation}
\begin{aligned}\label{H.minus.5}
|\langle m_k - m ; \phi \rangle | &\stackrel{\eqref{interior.cubes}}{\lesssim} | \sum_{z \in \mathring{N}_{k}} (S_{k,z}  -\E_\rho \bigl[ \rho^{d-2} \bigr]) \int_{Q_{k,z}}\phi |\\
&\quad\quad  + | \sum_{z \in N_k \backslash \mathring{N}_k} (S_{k,z}  -\E_\rho \bigl[ \rho^{d-2} \bigr]) \int_{Q_{k,z} \cap D}\phi |.
\end{aligned} 
\end{equation}

\smallskip

We claim that 
\begin{align}\label{H.minus.6.b}
| \sum_{z \in \mathring{N}_{k}} (S_{k,z}  -\E_\rho \bigl[ \rho^{d-2} \bigr]) \int_{Q_{k,z}}\phi |  \lesssim  \bigl( \avsum_{z \in \mathring{N}_k} (S_{k,z}  - \E_\rho \bigl[ \rho^{d-2} \bigr])^2 \bigr)^{\frac 1 2} \bigl( \int_D |\nabla \phi |^2 \bigr)^{\frac 12}. 
\end{align}
This is an easy consequence of the properties of the covering $\{ Q_{k,z} \}_{z \in N_k}$ of $D$, \eqref{number.N.k}, together with Cauchy-Schwarz's inequality and Poincar\'e's inequality for $\phi$ in $D$.

\smallskip

We now turn to the second term in \eqref{H.minus.5}. We note that, by definition \eqref{interior.cubes}, the set 
$$
\bigcup_{z \in N_k \backslash \mathring{N}_{k}} Q_{k, z} \subset \{ x \in \R^d \, \colon \, \mathop{dist}( x; \partial D) \leq 4k \eps \}.
$$
Since $\phi \in H^1_0(D)$ and $D$ is a smooth and bounded set, we may appeal to Poincar'e's inequality {\red \cite{???}}  to bound
\begin{align}\label{poincare.boundary}
\bigl(\sum_{z \in N_k \backslash \mathring{N}_k} \int_{Q_{k,z}}|\phi|^2  \bigr)^{\frac 1 2} \lesssim (k \eps) \bigl(\int_D |\nabla \phi|^2  \bigr)^{\frac 1 2}.
\end{align}
Appealing once again Cauchy-Schwarz's inequality and using the above estimate, we control
\begin{align*}
| \sum_{z \in N_k \backslash \mathring{N}_{k}} (S_{k,z}  -\E_\rho \bigl[ \rho^{d-2} \bigr]) \int_{K_{\alpha,z} \cap D}\phi | \lesssim  \bigl((k\eps)^{d+2} \sum_{z \in N_k \backslash \mathring{N}_{k}} (S_{k,z}  - \E_\rho \bigl[ \rho^{d-2} \bigr])^2 \bigr)^{\frac 1 2} \bigl(\int_D |\nabla \phi|^2 \bigr)^{\frac 12}.
\end{align*}
Hence, provided $k \eps \leq 1$, we may appeal to  \eqref{number.boundary.cubes} and infer that
\begin{align*}
| \sum_{z \in N_k \backslash \mathring{N}_{k}} (S_{k,z}  -\E_\rho \bigl[ \rho^{d-2} \bigr]) \int_{Q_{\alpha,z} \cap D}\phi |
&{\lesssim}  \bigl((k\eps)^3 \avsum_{z \in N_k \backslash \mathring{N}_{k}} (S_{k,z}  - \E_\rho \bigl[ \rho^{d-2} \bigr])^2 \bigr)^{\frac 1 2} \bigl(\int_D |\nabla \phi|^2 \bigr)^{\frac 12}.
\end{align*}
Combining this with \eqref{H.minus.6.b} and \eqref{H.minus.5} allows us to infer that for every $\phi \in H^1_0(D)$
\begin{align*}
|\langle m_k - m ; \phi \rangle | {\lesssim}  \bigl((k\eps)^3 \avsum_{z \in N_k \backslash \mathring{N}_{k}} (S_{k, z}  - \E_\rho \bigl[ \rho^{d-2} \bigr])^2  +  \avsum_{z \in \mathring{N}_k} (S_{k,z}  - \E_\rho \bigl[ \rho^{d-2} \bigr])^2 \bigr)^{\frac 1 2} \bigl( \int_D |\nabla \phi |^2 \bigr)^{\frac 12},
\end{align*}
or, equivalently, that
\begin{align}\label{H.minus.7}
\| m_k - m \|_{H^{-1}(D)}^2 {\lesssim} (k\eps)^3 \avsum_{z \in N_k \backslash \mathring{N}_{k}} (S_{k,z}  - \E_\rho \bigl[ \rho^{d-2} \bigr])^2  +  \avsum_{z \in \mathring{N}_k} (S_{k,z}  - \E_\rho \bigl[ \rho^{d-2} \bigr])^2.
\end{align}
This, together with \eqref{H.minus.1}, establishes \eqref{H.minus}. The proof of Lemma \ref{det.estimate} is complete.
\end{proof}

\bigskip

\begin{proof}[Proof of Lemma \ref{det.KM}] The argument for this lemma is very similar to the one of \cite[Theorem 3.1]{Kacimi_Murat}. Since $f \in L^\infty$ and $D$ is smooth, by standard elliptic regularity we infer that $u \in W^{2,\infty}(D)$. By computing the (distributional) Laplacian of $u_\eps - w_\eps u$ we obtain that in $D^\eps$
\begin{align}\label{distributional.laplacian}
-\Delta ( u_\eps - &w_\eps u) = (C_0 + \Delta w_\eps) u - 2 \nabla \cdot ( (1-w_\eps) \nabla u ) + (1-w_\eps)\Delta u 
\end{align}
We now smuggle the term $(-\Delta W_\eps) u \in H^{-1}(D)$ in the right-hand side so that the previous identity turns into
\begin{align*}
-\Delta &( u_\eps - w_\eps u) \\
&=  (C_0 + \Delta W_\eps) u - \Delta( W_\eps-  w_\eps) u - 2 \nabla \cdot ( (1-w_\eps) \nabla u ) + (1-w_\eps)\Delta u  \ \ \ \ \text{in $D_\eps$.}
\end{align*}
We stress that, since $u \in W^{2,+\infty}(D)\cap H^1_0(D)$, $u_\eps \in H^1_0(D^\eps)$ ,$w_\eps \in H^1(D)$, the above equation holds in the sense that for every $\phi \in H^1_0(D_\eps)$ 
\begin{align}\label{weak.formulation}
\int \nabla \phi \cdot \nabla( u_\eps - w_\eps u) &= \langle C_0 + \Delta W_\eps ; u \phi \rangle + \int \nabla(W_\eps- w_\eps) \cdot \nabla(u \phi) \\
&\quad\quad + 2\int (1-w_\eps) \nabla u \cdot \nabla \phi + \int (1-w_\eps) \Delta u \, \phi  .
\end{align}
Since the balls $\{ B_{\frac \eps 4}(\eps z)\}_{z \in \Phi_\delta^\eps(D)}$ are all mutually disjoint, by definition \eqref{corrector} and equations \eqref{def.harmonic.annuli} we have that
$$
-\Delta W_\eps:= \sum_{z \in \Phi_\delta^\eps(D)} \partial_n w_{\eps,z} (\1_{\partial B_{\frac \eps 4}(\eps z)} - \1_{\partial B_{\aeps\rho_z}(\eps z)}).
$$
Since $\phi \in H^1_0(D^\eps)$ and therefore it vanishes on the spheres $\{\partial B_{\aeps\rho_z}(\eps z)\}_{z \in \Phi_\delta^\eps(D)}$, the above identity implies that
$$
 \langle \Delta W_\eps  ; u\phi \rangle  = - \sum_{z \in \Phi_\delta^\eps(D)} \int_{\partial B_{\frac \eps 4}(\eps z)} \partial_n w_{\eps,z} u \phi \stackrel{\eqref{def.mu.eps}}{=}- \langle \mu_\eps ; u \phi \rangle.
$$
Inserting this last identity in \eqref{weak.formulation}, we infer that 
\begin{align*}
\int \nabla \phi \cdot \nabla( u_\eps - w_\eps u) &= \langle C_0 - \mu_\eps; u \phi \rangle + \int \nabla(W_\eps- w_\eps) \cdot \nabla(u \phi) \\
&\quad\quad + 2\int (1-w_\eps) \nabla u \cdot \nabla \phi + \int (1-w_\eps) \Delta u \, \phi .
\end{align*}
We now choose $\phi = u_\eps- w_\eps u$ and apply H\"older's and Poincar\'e's inequalities to bound
\begin{align*}
\| u_\varepsilon - w_\varepsilon u \|_{H^1_0(D)}^2 \lesssim \| u \|_{W^{2,\infty}}^2\bigl( \| w_\eps - 1\|_{L^2(D)}^2 + \| \nabla (w_\eps- W_\eps) \|_{L^2(D)}^2 + \| \mu_\eps - \mu \|_{H^{-1}(D)}^2 \bigr).
\end{align*}
To obtain the claim of Lemma \ref{det.KM} it remains to use that, by the triangle inequality and H\"older's inequality, we have
\begin{align*}
\|\nabla( u_\eps - W_\eps u) \|_{L^2(D)} \leq \|u \|_{W^{2,\infty}} \| W_\eps - w_\eps \|_{H^1(\R^d)} + \| u_\varepsilon - w_\varepsilon u \|_{H^1_0(D)}
\end{align*}
and that, by definitions \eqref{corrector} and \eqref{oscillating.function}, the difference $W_\eps- w_\eps$ is compactly supported in $\{ x \in \R^d \, \colon \, \mathop{dist}(x; \partial D) \leq 4 \}$ (see also Lemma \ref{l.geometry.periodic}).
\end{proof}

\bigskip

\subsection{Annealed estimates (Proof of Theorem \ref{t.main}, $(a)$)}
In this subsection we rely on the quenched estimate of Lemma \ref{det.estimate} to prove the  statement of Theorem \ref{t.main} in the case of periodic centres. The first ingredient is the following annealed bound:

\bigskip

\begin{lem}\label{l.capacity.average} Let $(\Phi , \mathcal{R})$ satisfy the assumptions of Theorem \ref{t.main}, $(a)$. For every $\delta \in (0, \frac{2}{d-2}]$, let $n^\eps(D) \subset \Phi^\eps(D)$ be the random subset constructed in Lemma \ref{l.geometry.periodic}.  Then
\begin{align*}
\E \bigl[  \eps^d \sum_{z \in \Phi^\eps(D) \backslash n^\eps(D)} \rho_z^{d-2} \bigr] \lesssim  \eps^{(\frac{2}{d-2} -\delta)\beta}. 
\end{align*}
\end{lem}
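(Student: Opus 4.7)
\medskip

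\noindent\textbf{Plan of proof.} The strategy is to split $\Phi^\eps(D)\setminus n^\eps(D)=J^\eps_b\cup\tilde I^\eps_b$ according to the definitions \eqref{index.set.J}--\eqref{definition.bad.index}, and estimate the two contributions separately. Throughout the argument, set
\begin{align*}
M:=\eps^{\delta-\frac{2}{d-2}},
\end{align*}
so that $z\in J^\eps_b$ is equivalent to $\rho_z\ge M$ (recall $\aeps=\eps^{d/(d-2)}$). Since $\delta<\frac{2}{d-2}$, the threshold $M$ tends to $+\infty$ as $\eps\downarrow 0$, and the tail bound following from \eqref{integrability.radii} reads
\begin{align*}
\E_\rho\bigl[\rho^{d-2}\1_{\rho\ge M}\bigr]\le M^{-\beta}\E_\rho\bigl[\rho^{d-2+\beta}\bigr]\le M^{-\beta}=\eps^{(\frac{2}{d-2}-\delta)\beta}.
\end{align*}

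\medskip

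\noindent\textbf{Contribution of $J^\eps_b$.} Since $\Phi=\Z^d$, we have $|\Phi^\eps(D)|\lesssim\eps^{-d}$, and the marks are i.i.d., so
\begin{align*}
\E\Bigl[\eps^d\sum_{z\in J^\eps_b}\rho_z^{d-2}\Bigr]=\eps^d\sum_{z\in\Phi^\eps(D)}\E_\rho\bigl[\rho^{d-2}\1_{\rho\ge M}\bigr]\lesssim\E_\rho\bigl[\rho^{d-2}\1_{\rho\ge M}\bigr]\lesssim\eps^{(\frac{2}{d-2}-\delta)\beta},
\end{align*}
which already matches the target.

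\medskip

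\noindent\textbf{Contribution of $\tilde I^\eps_b$.} By \eqref{index.set.I.tilde}, each $z\in\tilde I^\eps_b$ has some $w\in J^\eps_b$ with $w\ne z$ and $B_{\eps/4}(\eps z)\cap B_{2(\aeps\rho_w\wedge 1)}(\eps w)\ne\emptyset$, hence $|z-w|\le\eps^{-1}\bigl(\tfrac{\eps}{4}+2(\aeps\rho_w\wedge 1)\bigr)$. Using that the number of lattice points in a ball of radius $R$ is $\lesssim R^d+1$, I obtain the pointwise bound
\begin{align*}
\sum_{z\in\tilde I^\eps_b}\rho_z^{d-2}\le\sum_{w\in\Phi^\eps(D)}\1_{\rho_w\ge M}\sum_{\substack{z\in\Phi^\eps(D),\,z\ne w\\ |z-w|\le c(\eps^{-1}(\aeps\rho_w\wedge 1)+1)}}\rho_z^{d-2}\1_{\rho_z<M}.
\end{align*}
Taking expectations and using independence of $\rho_w$ from $\{\rho_z\}_{z\ne w}$, combined with $\E_\rho[\rho^{d-2}\1_{\rho<M}]\le\E_\rho[\rho^{d-2}]\lesssim 1$ (a consequence of \eqref{integrability.radii} via $\rho^{d-2}\le 1+\rho^{d-2+\beta}$), yields
\begin{align*}
\E\Bigl[\eps^d\sum_{z\in\tilde I^\eps_b}\rho_z^{d-2}\Bigr]\lesssim\eps^d\sum_{w\in\Phi^\eps(D)}\E_\rho\Bigl[\1_{\rho\ge M}\bigl(\eps^{-d}(\aeps\rho\wedge 1)^d+1\bigr)\Bigr]\lesssim\E_\rho\Bigl[\1_{\rho\ge M}\bigl(\eps^{-d}(\aeps\rho\wedge 1)^d+1\bigr)\Bigr].
\end{align*}
The key algebraic observation is that, since $0\le\aeps\rho\wedge 1\le 1$ and $d\ge d-2$,
\begin{align*}
\eps^{-d}(\aeps\rho\wedge 1)^d\le\eps^{-d}(\aeps\rho\wedge 1)^{d-2}\le\eps^{-d}\aeps^{d-2}\rho^{d-2}=\rho^{d-2},
\end{align*}
because $\aeps^{d-2}=\eps^d$. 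Together with $\E_\rho[\1_{\rho\ge M}]\le M^{-(d-2+\beta)}\ll M^{-\beta}$, this reduces the above to $\E_\rho[\rho^{d-2}\1_{\rho\ge M}]\lesssim M^{-\beta}$, which completes the bound.

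\medskip

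\noindent\textbf{Main obstacle.} The only delicate point is that the size of the cluster of "captured" centres around a given $w\in J^\eps_b$ depends on the random radius $\rho_w$ itself, so one must not lose the independence between $\rho_w$ and the neighbouring $\rho_z$'s; the rate-preserving ingredient is the elementary bound $\eps^{-d}(\aeps\rho\wedge 1)^d\le\rho^{d-2}$, which "trades" $d$ copies of $\aeps$ for the capacity exponent $d-2$ and thereby brings the neighbour contribution to the same size as the direct $J^\eps_b$ contribution.
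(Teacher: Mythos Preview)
Your proof is correct and follows essentially the same route as the paper: split $\Phi^\eps(D)\setminus n^\eps(D)=J^\eps_b\cup\tilde I^\eps_b$, handle $J^\eps_b$ by the tail bound $\E_\rho[\rho^{d-2}\1_{\rho\ge M}]\le M^{-\beta}$, and for $\tilde I^\eps_b$ count the lattice points captured by each $w\in J^\eps_b$, exploit independence of $\rho_w$ from the surrounding marks, and reduce to the $J^\eps_b$ estimate. Your elementary inequality $\eps^{-d}(\aeps\rho\wedge 1)^d\le\rho^{d-2}$ is exactly the paper's counting bound $\#\{z:\eps|z-w|<\eps+\aeps\rho_w\wedge 1\}\lesssim 1+\rho_w^{d-2}$ written out more explicitly; the only cosmetic point is that the statement allows $\delta=\tfrac{2}{d-2}$, where $M=1$ and the claim is trivial, so your remark ``$\delta<\tfrac{2}{d-2}$'' is not actually needed.
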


\bigskip

\begin{proof}[Proof of Theorem \ref{t.main}, $(a)$]
By the assumptions on $D$, we may find a constant $c=c(D) \leq 1$ such that for $\eps > 0$, and $k \in \N$ such that $\eps k \leq c$, the cube $Q_{k,0} \subseteq D$. We restrict to the values of $k\in\N$ satisfying the previous bound.

\smallskip

Combining Lemma \ref{det.estimate} and Lemma \ref{l.capacity.average}, we bound for every $\eps> 0$ and $k\in \N$ as above
\begin{align*}
\E \bigl[ \| u_\varepsilon - W_\varepsilon u \|_{H^1_0(D)}^2 \bigr] &\lesssim  (k\eps)^2  \E \bigl[ \eps^d \sum_{z \in \Phi_{\delta}^\eps(D)} \rho_z^{2(d-2)} \bigr] + \E  \bigl[ \avsum_{z \in \mathring{N}_k} (S_{k,z} -  \E \bigl[ \rho^{d-2} \bigr])^2 \bigr] \\
&\quad\quad + (\eps k)^3 \E  \bigl[ \avsum_{z \in N_k \backslash \mathring{N}_k} (S_{k,z} - \E \bigl[ \rho^{d-2} \bigr])^2 \bigr] +\eps^{(\frac{2}{d-2} -\delta)\beta}.
\end{align*}
Since the sets $N_k, \mathring{N}_k$ are deterministic, and $\{ S_{k,z}\}_{z \in \mathring{N}_k}$ are identically distributed, we infer that 
\begin{align*}
\E \bigl[ \| u_\varepsilon - W_\varepsilon u \|_{H^1_0(D)}^2 \bigr] &\lesssim  (k\eps)^2  \E \bigl[ \eps^d \sum_{z \in \Phi_{\delta}^\eps(D)} \rho_z^{2(d-2)} \bigr] + \E  \bigl[(S_{k,0} - \E \bigl[ \rho^{d-2} \bigr])^2 \bigr] \\
&\quad\quad + (\eps k)^3 \avsum_{z \in N_k \backslash \mathring{N}_k}  \E  \bigl[ (S_{k,z} - \E \bigl[ \rho^{d-2} \bigr])^2 \bigr] + \eps^{(\frac{2}{d-2} -\delta)\beta},
\end{align*}
We observe that 
$$
 \E \bigl[ \eps^d \sum_{z \in \Phi_{\delta}^\eps(D)} \rho_z^{2(d-2)} \bigr]  \lesssim \E \bigl[ \rho^{2(d-2)} \1_{\rho < \eps^{-\frac{2}{d-2}+\delta}} \bigr].
$$
This identity is, indeed, an easy consequence of the definition \eqref{thinning.psi} and of the fact that $\Phi^\eps(D)$ is deterministic with $\#\Phi^\eps(D) \lesssim \eps^{-d}$. The previous two displays thus imply that
\begin{align}\label{main.1}
\E \bigl[ \| u_\varepsilon - W_\varepsilon u \|_{H^1_0(D)}^2 \bigr] &\lesssim  (k\eps)^2  \E \bigl[ \rho^{2(d-2)} \1_{\rho < \eps^{-\frac{2}{d-2}+\delta}} \bigr] + \E  \bigl[(S_{k,0} - \E \bigl[ \rho^{d-2} \bigr])^2 \bigr] \\
&\quad\quad + (\eps k)^3 \avsum_{z \in N_k \backslash \mathring{N}_k}  \E  \bigl[ (S_{k,z} - \E \bigl[ \rho^{d-2} \bigr])^2 \bigr] +\eps^{(\frac{2}{d-2} -\delta)\beta}.
\end{align}

\smallskip

We now claim that if we choose  $k \leq \eps^{-\frac{2}{d+2}}$, then
\begin{equation}
\begin{aligned}\label{main.0.b}
\E \bigl[ \| u_\varepsilon - W_\varepsilon u \|_{H^1_0(D)}^2 \bigr] &\lesssim  (k\eps)^2\E \bigl[ \rho^{2(d-2)} \1_{\rho < \eps^{-\frac{2}{d-2}+\delta}} \bigr] \\
& \quad\quad\quad + k^{-d} \mathop{Var}(Y_{\eps,0} \1_{\rho< \eps^{-\frac{2}{d-2} +\delta}} )  +\eps^{(\frac{2}{d-2} -\delta)\beta},
\end{aligned}
\end{equation}
where $Y_{\eps,0}$ is defined as in \eqref{averaged.sum}. We remark that for $k$ as above, we have that $\eps k \to 0$ when $\eps \downarrow 0$ and therefore that $\eps k \leq c$ for  $\eps$ is small enough (only depending on $D$ and $d$).  We begin by showing how to conclude the proof of the theorem provided the estimate in the previous display holds. 

\smallskip

Let us first assume that \eqref{integrability.radii} holds with $\beta \geq d-2$; in this case, we have that 
$$
\E\bigl[  \rho^{2(d-2)}  \bigr] + \E\bigl[ Y_{\eps,0}^{2(d-2)} \1_{\rho < \eps^{-\frac{2}{d-2}+\delta}} \bigr] \lesssim 1
$$ 
and therefore that
\begin{align}\label{important.estimate}
\E \bigl[ \| u_\varepsilon - W_\varepsilon u \|_{H^1_0(D)}^2 \bigr] &\lesssim  (k\eps)^2 + k^{-d} +  \eps^{(\frac{2}{d-2}-\delta) \beta}.
\end{align}
Estimate of Theorem \ref{t.main} for $\beta \geq d-2$ follows from this inequality if we minimize the right-hand side above in $k$, i.e. if we choose $k= \lfloor \eps^{-\frac{2}{d+2}} \rfloor$, and set $\delta$ as in Theorem \ref{t.main}.

\smallskip

Let us now assume that $\beta < d-2$ in \eqref{integrability.radii}: In this case, we bound
\begin{align}
\mathop{Var}(Y_{\eps,0} \1_{\rho< \eps^{-\frac{2}{d-2}+\delta}} ) + \E\bigl[ \rho^{2(d-2)} \1_{\rho < \eps^{-\frac{2}{d-2}+\delta}} \bigr] \lesssim \eps^{-(\frac{2}{d-2}- \delta)(d-2-\beta)}
\end{align}
so that \eqref{main.0.b} turns into
\begin{align}
\E \bigl[ \| u_\varepsilon - W_\varepsilon u \|_{H^1_0(D)}^2 \bigr] &\lesssim \bigl( (k\eps)^2 + k^{-d} \bigr) \eps^{-(\frac{2}{d-2}- \delta)(d-2-\beta)} +  \eps^{(\frac{2}{d-2}-\delta) \beta}.
\end{align}
Also in this case, we infer the estimate of Theorem \ref{t.main} by minimizing the right-hand side in $k$ and $\delta$, i.e. choosing $k= \lfloor \eps^{-\frac{2}{d+2}} \rfloor$ and $\delta$ as in Theorem \ref{t.main}. 

\bigskip

To complete the proof of the theorem it only remains to argue \eqref{main.0.b} from \eqref{main.1}. We first tackle the second term on the right-hand side of \eqref{main.1} and show that
\begin{align}\label{main.3}
\E  \bigl[(S_{k,0} - \E \bigl[ \rho^{d-2} \bigr])^2 \bigr] &\lesssim  k^{-d} \mathop{Var}(Y_{w,\eps}  \1_{\rho < \eps^{-\frac{2}{d-2}+\delta}})+ \eps^4 \E \bigl[ \rho^{2(d-2)}   \1_{\rho < \eps^{-\frac{2}{d-2} +\delta}} \bigr]^2 \\
&\quad \quad + \eps^{2(\frac{2}{d-2} -\delta)\beta}.
\end{align}
We begin by remarking that the definition of $\Phi^\eps_{\delta}(D)$ and of $N_{k,z}$ (see \eqref{thinning.psi} and \eqref{number.covering}), implies that
\begin{align}\label{N.k.lattice}
N_{k,z} = \bigl\{ w \in \Z^d \, \colon \, \eps w  \in Q_{k,z} \cap D, \ \ \aeps\rho_w < \eps^{1+\delta} \bigr\}.
\end{align}
Since $0 \in \mathring{N}_k$, this last identity allows us to rewrite
\begin{align*}
S_{k,0} -  \E_\rho \bigl[\rho^{d-2} \bigr] &\stackrel{\eqref{averaged.sum}}{=} \avsum_{w \in \Z^d \atop \eps w \in Q_{k, 0}}Y_{w,\eps} \1_{\rho_w < \eps^{-\frac{2}{d-2}+\delta}} - \E_\rho \bigl[\rho^{d-2} \bigr] \\
&=  \avsum_{w \in \Z^d \atop \eps w \in Q_{k, 0}} (Y_{w,\eps}\1_{\rho_w < \eps^{-\frac{2}{d-2}+\delta}} - \E_\rho \bigl[\rho^{d-2} \1_{\rho < \eps^{-\frac{2}{d-2}+\delta}} \bigr] \bigr) + \E_\rho \bigl[\rho^{d-2} \1_{\rho \geq \eps^{-\frac{2}{d-2}+\delta}} \bigr] .
\end{align*}
Hence,
\begin{align}\label{main.3.c}
\E \bigl[ (S_{k,0} - \E_\rho \bigl[\rho^{d-2} \bigr] )^2\bigr] &\lesssim \E \bigl[ (\avsum_{w \in \Z^d \atop \eps w \in Q_{k, 0}}Y_{w,\eps} \1_{\rho_w < \eps^{-\frac{2}{d-2} +\delta}} - \E_\rho \bigl[\rho^{d-2}\1_{\rho < \eps^{-\frac{2}{d-2}+\delta}}\bigr] )^2 \bigr] \\
&\quad \quad + \E_\rho \bigl[\rho^{d-2} \1_{\rho \geq \eps^{-\frac{2}{d-2}+\delta}} \bigr]^2
\end{align}
Since Chebyshev's inequality and assumption \eqref{integrability.radii} we have
\begin{align}\label{Cheb}
\E_\rho \bigl[\rho^{d-2} \1_{\rho \geq \eps^{-\frac{2}{d-2}+\delta}} \bigr]\lesssim \eps^{(\frac{2}{d-2} -\delta)\beta},
\end{align}
we rewrite \eqref{main.3.c} as
\begin{align}\label{main.3.c}
\E \bigl[ (S_{k,0} - \E_\rho \bigl[\rho^{d-2} \bigr] )^2\bigr] &\lesssim \E \bigl[ (\avsum_{w \in \Z^d \atop \eps w \in Q_{k, 0}}Y_{w,\eps} \1_{\rho_w < \eps^{-\frac{2}{d-2} +\delta}} - \E_\rho \bigl[\rho^{d-2}\1_{\rho < \eps^{-\frac{2}{d-2}+\delta}}\bigr] )^2 \bigr] \\
&\quad\quad +  \eps^{2(\frac{2}{d-2} -\delta)\beta}.
\end{align}
Using the independence of the random variables $\{\rho_z \}_{z \in \Phi}$, and the fact that for $\Phi =\Z^d$ we have $N^\eps(Q_{k,0}) = k^{-d}$ (c.f. \eqref{notation.psi}), we decompose \begin{equation}
\begin{aligned}\label{main.3.b}
\E \bigl[ (\avsum_{w \in \Z^d \atop \eps w \in Q_{k, 0}}Y_{w,\eps} \1_{\rho_w < \eps^{-\frac{2}{d-2} +\delta}}& - \E_\rho \bigl[\rho^{d-2}\1_{\rho < \eps^{-\frac{2}{d-2}+\delta}}\bigr] )^2 \bigr] \lesssim  k^{-d} \mathop{Var}(Y_{0,\eps}  \1_{\rho < \eps^{-\frac{2}{d-2} +\delta}}) \\
& + k^{-2d}\sum_{w \in \Z^d \atop \eps w \in Q_{k, 0}}\sum_{\tilde w \in \Z^d \backslash\{ w \} \atop \eps \tilde w \in Q_{k, 0}} (\E \bigl[  Y_{\eps,0}- \E_\rho\bigl[ \rho^{d-2}\1_{\rho < \eps^{-\frac{2}{d-2}+\delta}}\bigr]\bigr])^2\end{aligned}
\end{equation}
We now observe that, by \eqref{averaged.sum} and the triangle inequality, it holds
\begin{align}
|\E \bigl[  Y_{\eps,0}- \E_\rho\bigl[ \rho^{d-2}\1_{\rho < \eps^{-\frac{2}{d-2}+\delta}}\bigr]| \lesssim \eps^2 \E \bigl[ \rho^{2(d-2)}   \1_{\rho < \eps^{-\frac{2}{d-2} +\delta}} \bigr].
\end{align}
To obtain \eqref{main.3} it only remains to insert the inequality above into \eqref{main.3.b}.

\bigskip

We now turn to the remaining term in \eqref{main.1} and argue that
\begin{align}\label{main.4}
(\eps k)^3 \avsum_{z \in N_k \backslash \mathring{N}_k}  \E  \bigl[ (S_{k,z} - \E \bigl[ \rho^{(d-2)} \bigr])^2 \bigr] \lesssim (k\eps)^2\E \bigl[ \rho^{2(d-2)} \1_{\rho < \eps^{-\frac{2}{d-2} +\delta}} \bigr].
\end{align}
By the triangle inequality and assumption \eqref{integrability.radii}, the left-hand side is bounded by
\begin{align}\label{main.4.b}
(\eps k)^3 \avsum_{z \in N_k \backslash \mathring{N}_k}  \E  \bigl[ (S_{k,z} - \E \bigl[ \rho \bigr])^2 \bigr] \lesssim (\eps k)^3 + (\eps k)^3 \avsum_{z \in N_k \backslash \mathring{N}_k}  \E  \bigl[ S_{k,z}^2 \bigr]
\end{align}
To establish \eqref{main.4} from this it suffices to remark that, by \eqref{averaged.sum} and \eqref{N.k.lattice}, we have 
$$
|S_{k,z}|^2 \lesssim \avsum_{w \in \Z^3 \atop \eps w \in Q_{k,z}\cap D}  \rho_w^{2(d-2)}\1_{\rho_w < \eps^{-\frac{2}{d-2} +\delta}}
$$
so that this, and the fact that the random variables $\{ \rho_z \}_{z \in \Phi^\eps(D)}$ are identically distributed, yields
\begin{align*}
 \avsum_{z \in N_k \backslash \mathring{N}_k}  \E  \bigl[ (S_{k,z})^2 \bigr]\lesssim \E  \bigl[\1_{\rho < \eps^{-\frac{2}{d-2} +\delta}} \rho^{2(d-2)} \bigr].
\end{align*}
Inserting this into \eqref{main.4.b} implies \eqref{main.4}. 

To establish \eqref{main.0.b}  it remains to combine \eqref{main.4}, \eqref{main.3} and \eqref{main.1} and use that, for $k \leq \eps^{-\frac{2}{d-2}}$, it holds
$$
\eps^4 \E \bigl[ \rho^{2(d-2)}   \1_{\rho < \eps^{-\frac{2}{d-2} +\delta}} \bigr]^2 \lesssim (\eps k)^2 \E \bigl[ \rho^{2(d-2)}   \1_{\rho < \eps^{-\frac{2}{d-2} +\delta}} \bigr].
$$
 The proof of Theorem \ref{t.main}, $(a)$ is complete.
\end{proof}

\bigskip

\begin{proof}[Proof of Lemma \ref{l.capacity.average}]
We resort to the construction of the set $n^\eps(D)$ implemented in Lemma \ref{l.geometry.periodic}: By \eqref{definition.bad.index}, \eqref{index.set.J} and \eqref{index.set.I.tilde} in the proof of Lemma \ref{l.geometry.periodic} we decompose
\begin{align}\label{capacity.bad.periodic.1}
 \eps^d \sum_{\Phi^\eps(D) \backslash n^\eps(D)} \rho_w^{d-2} = \eps^d \sum_{z \in J^\eps_b} \rho_z^{d-2} +  \eps^d \sum_{z \in \tilde I^\eps_b} \rho_z^{d-2} .
\end{align}
and prove the statement of Lemma \ref{l.capacity.average} for each one of the two sums. We begin with the first one: Using \eqref{index.set.J} we write
\begin{align*}
 \eps^d \sum_{z \in J^\eps_b} \rho_z^{d-2} =  \eps^d \sum_{z \in \Phi^\eps(D)} \rho_z^{d-2} \1_{\rho_z \geq \eps^{-\frac{2}{d-2} +\delta}}.
\end{align*}
Taking the expectation and using that $\{ \rho_z\}_{\Phi^\eps(D)}$ are identically distributed and that $N^\eps(D) \lesssim \eps^{-d}$, we immediately bound
\begin{align}\label{capacity.bad.periodic.sum1}
 \eps^d \E \bigl[ \sum_{z \in J^\eps_b} \rho_z^{d-2} \bigr] \lesssim \E_\rho \bigl[ \rho^{d-2} \1_{\rho \geq \eps^{-\frac{2}{d-2} +\delta}} \bigr] \stackrel{\eqref{Cheb}}{\lesssim} \eps^{(\frac{2}{d-2} - \delta)\beta},
\end{align}
i.e.  the claim of Lemma \ref{l.capacity.average} for the first sum in \eqref{capacity.bad.periodic.1}.

\smallskip

We now turn to the second sum in \eqref{capacity.bad.periodic.1}: By definition \eqref{index.set.I.tilde}, if $z \in \tilde I_b^\eps$, then $\rho_z \leq \eps^{-\frac{2}{d-2} +\delta}$ and there exists an element $w \in J_b^\eps$ such that $\eps |z -w| < \eps + (\aeps\rho_w \wedge 1)$. This allows us to bound
\begin{align}\label{capacity.bad.3}
\eps^d \sum_{z \in \tilde I^\eps_b} \rho_z^{d-2} &\leq \eps^d \sum_{w \in  J^\eps_b}  \sum_{z \in \Phi^\eps(D) \backslash \{ w\}, \atop \eps |z-w| < \eps + \aeps\rho_w \wedge 1} \rho_z^{d-2} \1_{\rho_z < \frac 1 2 \eps^{-\frac{2}{d-2} +\delta}}\\
&=  \eps^d\sum_{w \in \Phi^\eps(D)}  \1_{\rho_w \geq \eps^{-\frac{2}{d-2} +\delta}} \sum_{z \in \Phi^\eps(D) \backslash \{ w\}, \atop \eps |z-w| < \eps+ \aeps\rho_w} \rho_z^{d-2} \1_{\rho_z <  \eps^{-\frac{2}{d-2} +\delta}} .
\end{align}
We now take the expectation and use that $\Phi= \Z^d$ and that $\{\rho_z \}_{z\in \Phi}$ are independent and identically distributed: This implies that
\begin{align}\label{capacity.bad.4}
\E \bigl[ \eps^d \sum_{z \in \tilde I^\eps_b} \rho_z^{d-2} \bigr]& \lesssim \E\bigl[ \eps^d \sum_{w \in \Phi^\eps(D)}  \1_{\rho_w \geq \eps^{-\frac{2}{d-2} +\delta}}  \# \{ z \in \Phi^\eps(D) \backslash \{ w\} \, \colon \, \eps|z-w| < \eps + \aeps\rho_w \wedge 1\} \bigr].
\end{align}
Since for every $w \in J^\eps_b$, the set
$$
\# \{ z \in \Phi^\eps(D) \backslash \{ w\} \, \colon \, \eps|z-w| < \eps + \aeps\rho_w \wedge 1\}  \lesssim 1 + \rho_w^{d-2},
$$
we obtain that
\begin{align}\label{term.I.tilde.periodic}
\E \bigl[ \eps^d \sum_{z \in \tilde I^\eps_b} \rho_z^{d-2} \bigr]& \lesssim   \E\bigl[\eps^d\sum_{w \in \Phi^\eps(D)} \rho_w^{d-2} \1_{\rho_w \geq \eps^{-\frac{2}{d-2} +\delta}}\bigr] \stackrel{\eqref{index.set.J}}{=} \E\bigl[\eps^d \sum_{w \in J^\eps_b}\rho_w^{d-2}\bigr]\\
&\stackrel{\eqref{capacity.bad.periodic.sum1}}{\lesssim} \eps^{(\frac{2}{d-2}-\delta)\beta} .
\end{align}
This, identity \eqref{capacity.bad.periodic.1} and \eqref{capacity.bad.periodic.sum1} establish Lemma \ref{l.capacity.average}.
\end{proof}

\section{Proof of Theorem \ref{t.main}, $(b)$}\label{s.poi.d}
In this section we adapting the argument of the previous section to show also Theorem \ref{t.main} in case $(b)$. As mentioned in Subsection \ref{sub.ideas},
the main challenge is related to the construction of a mesoscopic covering $\{ K_{k,z} \}_{z \in N_k}$ that plays the same role of $\{ Q_{k,z} \}_{z \in N_k}$  of Subsection \ref{sub.covering} for Theorem \ref{t.main}. In the present case the random positions of the centres imply that there are configurations (with positive probability) in which some of the spheres $\{ \partial B_{\frac \eps 4}(\eps z)\}_{z\in \Phi}$ intersect the boundary of $\{ Q_{k,z} \}_{z \in N_k}$. This prevents us from appealing to Lemma \ref{Kohn_Vogelius} as condition \eqref{contains.balls} is not satisfied.  

\smallskip

We stress that all the results contained in this section besides hold for any dimension $d \geq 3$. However, in the proof of Theorem \ref{t.main}, $(b)$ we obtain the same decay rate of case $(a)$ only in $d=3$.  In higher dimensions we obtain a slower (but still algebraic) rate. In order to best appreciate this dimensional constraint, in the whole section we work in a general dimension $d \geq 3$. 

\smallskip

Throughout this section we set $\delta$ as in Theorem \ref{t.main} and define the parameters
\begin{align}\label{right.regime.ppp}
k  := \lfloor \eps^{-\frac{2}{d+2}} \rfloor, \ \ \ \kappa:= \frac{2}{(d-1)(d+2)}.
\end{align}

\subsection{Partition of the holes and mesoscopic covering of $D$}\label{sub.covering.ppp}
This subsection contains an adaptation  to the case of random centres of Lemma \ref{l.geometry.periodic} and of the sets $\{ Q_{k,z}\}_{z \in N_k}$. 

\bigskip

\begin{lem}\label{l.geometry.ppp}  Let $\delta$ be as in Theorem \ref{t.main}. We recall the definition \eqref{minimal.distance} of $R_{\eps,z}$.  For $\omega \in \Omega$, we consider a realization of the marked point process $(\Phi; \mathcal{R})$ and of the associated set of holes $H^\eps$. Then, there exists a partition
$$
H^\eps:= H^\varepsilon_{g} \cup H^\varepsilon_{b},
$$
with the following properties:
\begin{itemize}
\item There exists a subset of centres $n^\eps(D) \subset \Phi^\eps(D)$ such that
\begin{align}\label{good.set.ppp}
H^\varepsilon_g : = \bigcup_{z \in n^\varepsilon(D)} B_{\aeps \rho_z}( \varepsilon z ), \ \ \ \ \min_{z \in n^\eps(D)}R_{\eps, z}\geq \eps^{2}, \ \ \ \ \ \max_{z \in n^\eps(D)}\aeps \rho_z \leq \eps^{1+\delta},
\end{align}
and such that
\begin{align}\label{no.overlapping}
2 \sqrt d \aeps \rho_z \leq R_{\eps,z}, \ \ \ \ \text{for every $z \in n^\eps(D)$.} 
\end{align}

\smallskip

\item There exists a set $D^\eps_b(\omega) \subset  \{ x \in \R^d \, \colon \, \mathop{dist}(x, D) \leq 2\}$ satisfying
\begin{align}
H^\varepsilon_{b} \subset D^\eps_b, \ \ \ \capacity ( H^\varepsilon_b, D_b^\eps) \lesssim \aeps \sum_{z \in \Phi^\eps(D) \backslash n^\eps(D)} \rho_z^{d-2} \label{capacity.sum}
\end{align}
and for which
\begin{align}\label{ppp.distance.good.bad}
B_{R_{\eps,z}}(\eps z) \cap D^\eps_b = \emptyset, \ \ \  \ \ \ \ \text{for every $z \in n^\eps(D)$.} 
\end{align}
\end{itemize}
\end{lem}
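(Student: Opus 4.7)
My plan is to imitate the construction of Lemma \ref{l.geometry.periodic} as closely as possible, but with one new ingredient tailored to the Poisson setting: a bad index set $K^\eps_b$ that removes centers whose nearest-neighbor distance $R_{\eps,z}$ is too small. In the periodic proof the value $R_{\eps,z} \equiv \eps/4$ was automatic, whereas in the Poisson case clusters of close centers appear with positive probability and may violate both the isolation requirement $R_{\eps,z} \geq \eps^2$ in \eqref{good.set.ppp} and the no-overlap condition $R_{\eps,z} \geq 2\sqrt{d}\aeps\rho_z$ in \eqref{no.overlapping}. A second modification is to enlarge the ``bad neighborhood'' $\tilde H^\eps_b$ so that it also swallows the $K^\eps_b$-balls: without this, a small ball at a clustered center could still lurk inside the safe zone of a good center and spoil the separation \eqref{ppp.distance.good.bad}.

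Concretely, I would set
\[
J^\eps_b := \{z \in \Phi^\eps(D) : \aeps \rho_z > \eps^{1+\delta}\}, \quad K^\eps_b := \bigl\{z \in \Phi^\eps(D) : R_{\eps,z} < \max(\eps^2, 2\sqrt{d}\aeps\rho_z)\bigr\},
\]
and, with $\tilde H^\eps_b := \bigcup_{z \in J^\eps_b \cup K^\eps_b} B_{2(\aeps\rho_z \wedge 1)}(\eps z)$,
\[
\tilde I^\eps_b := \bigl\{z \in \Phi^\eps(D) \setminus (J^\eps_b \cup K^\eps_b) : \tilde H^\eps_b \cap B_{R_{\eps,z}}(\eps z) \neq \emptyset\bigr\}.
\]
Then I put $I^\eps_b := J^\eps_b \cup K^\eps_b \cup \tilde I^\eps_b$, $n^\eps(D) := \Phi^\eps(D) \setminus I^\eps_b$, $D^\eps_b := \bigcup_{z \in I^\eps_b} B_{2(\aeps\rho_z \wedge 1)}(\eps z)$, and define $H^\eps_g$ and $H^\eps_b$ as in the statement.

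Properties \eqref{good.set.ppp} and \eqref{no.overlapping} are immediate from these definitions, since excluding $J^\eps_b$ yields $\aeps\rho_z \leq \eps^{1+\delta}$, while excluding $K^\eps_b$ simultaneously enforces $R_{\eps,z} \geq \eps^2$ and $R_{\eps,z} \geq 2\sqrt{d}\aeps\rho_z$. The capacity bound \eqref{capacity.sum} is a verbatim copy of the argument in Lemma \ref{l.geometry.periodic}, relying only on subadditivity of capacity and on monotonicity under $B_{\aeps\rho_z \wedge 1}(\eps z) \subset B_{2(\aeps\rho_z \wedge 1)}(\eps z)$.

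The main obstacle, and the reason for the careful design of $K^\eps_b$ and of $\tilde H^\eps_b$, is the separation \eqref{ppp.distance.good.bad}. Fix $w \in n^\eps(D)$ and $z \in I^\eps_b$. If $z \in J^\eps_b \cup K^\eps_b$, then $B_{2(\aeps\rho_z \wedge 1)}(\eps z) \subset \tilde H^\eps_b$, and the condition $w \notin \tilde I^\eps_b$ gives the emptiness of the intersection directly. If instead $z \in \tilde I^\eps_b$, then $z \notin K^\eps_b$ so $R_{\eps,z} \geq 2\sqrt{d}\aeps\rho_z$, whence $B_{2\aeps\rho_z}(\eps z) \subset B_{R_{\eps,z}}(\eps z)$; moreover, since $w \neq z$, the definition of $R_{\eps,\cdot}$ applied separately at $w$ and at $z$ gives $\eps|z-w| \geq 4\max(R_{\eps,z}, R_{\eps,w})$, so
\[
R_{\eps,z} + R_{\eps,w} \leq 2\max(R_{\eps,z}, R_{\eps,w}) \leq \tfrac{1}{2}\eps|z-w|,
\]
and the balls $B_{R_{\eps,z}}(\eps z)$ and $B_{R_{\eps,w}}(\eps w)$ are disjoint. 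This combined geometric step is the crux that makes the whole construction work.
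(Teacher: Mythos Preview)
Your construction and argument are correct and essentially identical to the paper's. The only cosmetic difference is that the paper splits your set $K^\eps_b$ into two pieces, $K^\eps_b = \{z \notin J^\eps_b : R_{\eps,z} \leq \eps^2\}$ and $C^\eps_b = \{z \notin J^\eps_b \cup K^\eps_b : 2\sqrt d\,\aeps\rho_z \geq R_{\eps,z}\}$, and then defines $\tilde H^\eps_b$, $\tilde I^\eps_b$ and $I^\eps_b$ exactly as you do with $J^\eps_b \cup K^\eps_b \cup C^\eps_b$ in place of your $J^\eps_b \cup K^\eps_b$; the resulting $n^\eps(D)$ and $D^\eps_b$ coincide with yours. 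Your verification of \eqref{ppp.distance.good.bad} is in fact more explicit than the paper's, which simply asserts that the argument of Lemma~\ref{l.geometry.periodic} carries over; your use of $R_{\eps,z} + R_{\eps,w} \leq \tfrac{1}{2}\eps|z-w|$ from the definition \eqref{minimal.distance} is exactly the step needed to replace the lattice bound $|z-w|\geq 1$ used in the periodic proof. One minor caveat: the downstream Lemma~\ref{l.capacity.average.ppp} refers back to the four separate index sets $J^\eps_b, K^\eps_b, C^\eps_b, \tilde I^\eps_b$, so if you keep your merged $K^\eps_b$ you would need to split it again there (or adapt that estimate trivially).
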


\begin{proof}[Proof of Lemma \ref{l.geometry.ppp}] 
The construction for the sets $H^\eps_g, H^\eps_b$ and $D^\eps_b$ is very similar to the one implemented in the proof of Lemma \ref{l.geometry.periodic} and in the proof of  \cite[Lemma 4.2]{GHV}. Also in this case, we denote by $I_\eps^b \subset \Phi_\eps(D)$ the set that generates the holes $H_b^\eps$. We construct $I^\eps_b$ in the following way: As in Lemma \ref{l.geometry.periodic}, we include in it the points $z \in \Phi^\eps(D)$ whose mark $\rho_z$ is bigger than the threshold $\eps^{-\frac{2}{d-2} +\delta}$, namely 
\begin{align}\label{index.set.ppp.1}
J^\eps_b= \Bigl\{ z \in \Phi^\eps(D) \colon \aeps\rho_z \geq \eps^{-\frac{2}{d-2} +\delta}\Bigr\}.
\end{align}
Contrarily to the periodic case of Lemma \ref{l.geometry.periodic}, we also need to consider the points of $\Phi_\eps(D)$ which are very close to each other: We indeed define
\begin{align}\label{index.set.ppp.2}
K^{\eps}_{b}:= \biggl\{ z \in \Phi^\eps(D) \backslash J^\eps_b \, \colon \, R_{\eps,z} \leq \eps^2 \biggr\}.
\end{align}
We now include in the set $I^\eps_b$ also those points that are close when compared to their radii, i.e. the set
\begin{align}\label{index.set.ppp.4}
C^\eps_b:=  \Bigl\{ z \in \Phi^\eps(D) \backslash (J^\eps_b \cup K^\eps_b) \,  \colon \,  2\sqrt d \aeps\rho_z \geq R_{\eps,z} \Bigr\}.
\end{align} 
Finally, given the holes
\begin{align*}
\tilde H^\eps_b := \bigcup_{z \in  J^\eps_b \cup K^\eps_b \cup C^\eps_b} B_{2 \aeps\rho_z}(\eps z),
\end{align*}
we consider the set of points in $\Phi_\eps(D) \backslash (J^\eps_b \cup K^\eps_b \cup C^\eps_b)$ that are close to the set $\tilde H_b^\eps$, i.e.
\begin{align}\label{index.set.ppp.3}
 \tilde I^\eps_b := \left \{ z \in \Phi^\eps(D) \backslash(J^\eps_b \cup K^\eps_b \cup C^\eps_b) \colon  \tilde H^\eps_b  \cap B_{R_{\eps,z}}(\eps z) \neq \emptyset \right\}.
\end{align}
 We define
\begin{equation}\label{definition.bad.index.ppp}
\begin{aligned}
I^\eps_b := \tilde I^\eps_b \cup J^\eps_b \cup&  K^\eps_b \cup C^\eps_b, \ \ \ n^\eps(D) := \Phi^\eps(D) \backslash I^\eps_b \\
H^\eps_b:=  \bigcup_{z \in I^\eps_b} B_{\aeps\rho_z}(\eps z), \ \ \ H^\eps_g&:=  \bigcup_{z_j \in n^\eps(D)} B_{\aeps \rho_z}(\eps z), \ \ \ D^\eps_b:= \bigcup_{z \in I^\eps_b}  B_{2\aeps\rho_z}(\eps z). 
\end{aligned}
\end{equation}

\bigskip

It remains to show that these sets satisfy properties \eqref{good.set.ppp}- \eqref{ppp.distance.good.bad}. Properties \eqref{good.set.ppp}, \eqref{no.overlapping} are immediate consequences of definitions \eqref{index.set.ppp.1}, \eqref{index.set.ppp.2}, \eqref{index.set.ppp.4} and \eqref{definition.bad.index.ppp}. Properties \eqref{capacity.sum} and  \eqref{ppp.distance.good.bad}  may be proven as \eqref{capacity.sum.periodic} and \eqref{distance.good.bad.periodic} of Lemma \ref{l.geometry.ppp}. The proof of Lemma \ref{l.geometry.ppp} is complete.
\end{proof}

\smallskip

For $k$ as in \eqref{right.regime.ppp}, let $\{ Q_{k,z} \}_{z\in N_k}$ be as in Subsection \ref{sub.covering}. For every $z \in N_k$ we define the sets $N_{k,z}$ as in \eqref{number.covering}. We stress that, in this case, \eqref{number.covering} is ill-defined for the realizations of $\Phi$ such that there are points in $\Phi^\eps_\delta(D)$  that fall on the boundary of the cubes $\{Q_{k,z}\}_{z \in N_k}$. This issue may be easily solved by fixing a deterministic rule to assign these points to a particular cube that shares the boundary considered. We stress that all the arguments of this section do not depend on this rule since the set of the boundaries of the covering  $\{Q_{k,z}\}_{z \in N_k}$ has zero (Lebesgue)-measure.

\smallskip

For $z\in N_k$ and $w \in N_{k,z}$, we define the modification of the minimal distance $R_{\eps,w}$ (see Figure \eqref{covering.pic1}):
\begin{align}\label{def.tilde.R}
\tilde R_{\eps,w}:= \begin{cases}
R_{\eps,w} \ \ \ \ &\text{if $\eps w \in Q_{z, k-1}$}\\
\eps^{1+\kappa} \wedge R_{\eps,w} \ \ \   &\text{if $\mathop{dist}(\eps w; \partial Q_{z,k} ) \leq \eps^{1+\kappa}$}\\
(2^{n-1}\eps^{1+\kappa})\wedge R_{\eps,w}  &\text{if $\eps w \notin Q_{z,k-1}$, \  $2^{n-1}\eps^{1+\kappa} \leq \mathop{dist}(\eps w; \partial Q_{z,k} ) \leq 2^n \eps^{1+\kappa}$}.
 \end{cases}
\end{align}
\begin{figure}
\begin{minipage}[c]{7cm}
\begin{tikzpicture}[scale=0.5]
\draw (-4,0) -- (6,0);
\draw (-4,6) -- (6,6);
\draw (-2,8) -- (-2,-2);
\draw (4,8) -- (4,-2);
\draw[thick, black] (-2,6) rectangle (4,0);
\draw[thick, blue] (-1,5) rectangle (3,1);
\draw[dashed, blue] (-1.4,5.4) rectangle (3.4,0.6);
\draw[dashed, blue] (-1.6,5.6) rectangle (3.6,0.4);
\draw[dashed, blue] (-1.75,5.75) rectangle (3.75,0.25);
\draw[dashed, blue] (-1.85,5.85) rectangle (3.85,0.15);

\fill[green] (2,2) circle (0.1);
\fill[green] (0,3) circle (0.1);
\fill[black] (-1.8,0.5) circle (0.1);
\fill[green] (1.8, 4.5) circle (0.1);
\fill[black] (2.4, 0.3) circle (0.1);



\draw (-4,8) rectangle (6,-2);
;\end{tikzpicture}
\end{minipage}
\caption{{\small The square in the thick black line is $Q_{k,z}$, while the blue on is $Q_{k-1,z}$. The dots are a points of $\Phi^\eps_\delta(D)$.  The dashed squares correspond to the sets $\{ x \in Q_{k,z}\, \colon \, \mathop{dist}(x; \partial Q_{k,z}) > 2^n \eps^{1+\kappa} \}$. Inside the blue square (i.e. for the green dots) the random variable $\tilde R_{\eps,w}= R_{\eps,w}$. In each dashed frame (i.e. for the black points), $\tilde R_{\eps, w}= R_{\eps,w} \wedge (2^n \eps^{1+\kappa})$. } }\label{covering.pic1}
\end{figure}
We aim at obtaining a (random) collection of disjoint sets $\{ K_{k,z}\}_{z\in N_k}$ having size $\simeq \eps k$ and such that for every $z \in N_k$ and $w \in \Phi^\eps_\delta(D)$ 
\begin{align}
B_{\tilde R_{\eps,z}}(\eps w) \cap K_{k,z} = \emptyset \ \ \ \  \text{OR} \ \ \  \ B_{2\tilde R_{\eps,z}}(\eps w) \subset K_{k,z}.
\end{align}
We modify $\{Q_{k,z}\}_{z \in N_k}$ as follows: For $\kappa$ as in \eqref{right.regime.ppp}, any $z \in N_k$ and $w \in N_{k,z}$, we consider the cubes
\begin{align}\label{minimal.cube}
\tilde Q_{\eps,w}:= \eps w +2[ - \tilde R_{\eps,z}; \tilde R_{\eps,z}].
\end{align}
Note that, by definition \eqref{minimal.distance}, all the cubes above are disjoint. For every $z\in N_k$, we thus set (see Figure \eqref{covering.pic}) 
\begin{align}\label{covering.Poisson}
K_{k, z} := \bigl(Q_{k, z} \bigcup_{w \in N_{k,z}} Q_{\eps,w} \bigr) \backslash \bigcup_{w \in \Phi_{\delta}^\eps(D) \backslash N_{k,z}} Q_{\eps,w}.
\end{align}
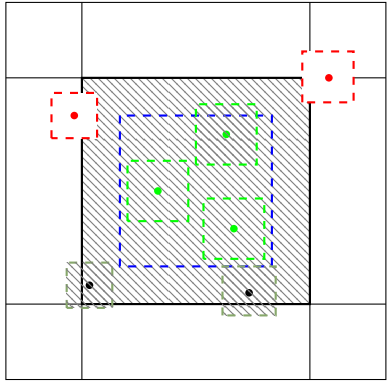
\begin{figure}
\begin{minipage}[c]{7cm}
\begin{tikzpicture}[scale=0.5]
\draw (-4,0) -- (6,0);
\draw (-4,6) -- (6,6);
\draw (-2,8) -- (-2,-2);
\draw (4,8) -- (4,-2);
\draw[pattern=north west lines, pattern color=gray] (-2,6) rectangle (4,0);
\draw[black, thick] (-2,6) rectangle (4,0);
\draw[dashed, thick, blue] (-1,5) rectangle (3,1);
\fill[green] (2,2) circle (0.1);
\fill[green] (0,3) circle (0.1);
\fill[black] (-1.8,0.5) circle (0.1);
\fill[green] (1.8, 4.5) circle (0.1);
\draw[dashed, thick, green] (1.2,1.2) rectangle (2.8,2.8);
\draw[dashed, thick, green] (-0.8, 2.2) rectangle (0.8,3.8);
\draw[dashed, thick, asparagus] (-2.4, -0.1) rectangle (-1.2, 1.1);
\fill[pattern=north west lines, pattern color=gray]  (-2.4, -0.1) rectangle (-1.2, 1.1);
\fill[white] (3.8, 5.35) rectangle (5.15, 6.7);
\draw[dashed, thick, red] (3.8, 5.35) rectangle (5.15, 6.7);
\fill[red] (4.5,6) circle (0.1);
\fill[pattern=north west lines, pattern color=gray] (1, 3.7) rectangle (2.6, 5.3);
\draw[dashed, thick, green] (1, 3.7) rectangle (2.6, 5.3);
\fill[white] (-2.8,4.4) rectangle (-1.6, 5.6);
\draw[dashed, thick, red] (-2.8,4.4) rectangle (-1.6, 5.6);
\fill[red] (-2.2, 5) circle (0.1);
\fill[black] (2.4, 0.3) circle (0.1);
\fill[pattern=north west lines, pattern color=gray]  (1.7, -0.3) rectangle (3.1, 1);
\draw[dashed, thick, asparagus]  (1.7, -0.3) rectangle (3.1, 1);



\draw (-4,8) rectangle (6,-2);
;\end{tikzpicture}
\end{minipage}
\caption{{\small The construction of $K_{\eps,z}$ from the cube $Q_{k,\eps}$. The dashed grey area corresponds to the set $K_{\eps,z}$, while $Q_{\eps,z}$ is the square bounded by the thick black line. The green dots are the points of $\Phi^\eps_\delta$ that fall inside the set $Q_{k-1,z}$ (here bounded by the dashed blue line). The red dots are the points that are outside of $Q_{k,z}$ but whose associated cube intersects $\partial Q_{k,z}$.  The black dots are the points that are in $Q_{k,z} \backslash Q_{k-1,z}$. Note that the cubes associated to the black and red dots are typically smaller than the ones associated to the green dots due to the cut-off $\tilde R_{\eps,z}$.}}\label{covering.pic}
\end{figure}
Since the cubes $\{ Q_{\eps,z} \}_{z \in \Phi_\delta^\eps(D)}$ are disjoint we have that
\begin{equation}
\begin{aligned}\label{properties.covering}
&\bigcup_{z \in N_k} K_{k,z} \supseteq D, \ \ \  |\mathop{diam}(K_{k,z})| \lesssim k \eps,\\
& ( k  - \eps^{\kappa})^d \eps^d \leq |K_{k,z}| \leq ( k + \eps^{\kappa})^d \eps^d.
\end{aligned}
\end{equation}
We emphasize that the previous properties hold for every realization of the point process $\Phi$. The introduction of the modified random variable $\tilde R_{\eps,z}$ is needed to ensure that the second property in \eqref{properties.covering} holds with $\eps^\kappa$ instead of $1$. This yields that the difference between the volume of the set $K_{k,z}$ and the cube $Q_{k,z}$ is of order $\eps^{d +\kappa} k^{d-1}$  instead of $\eps^d k^{d-1}$. This condition plays a crucial role in the proof of the theorem (see \eqref{main.5.ppp.2}) and is the main term that forces the dimensional constraint $d=3$ in the rates of convergence.

\subsection{Quenched estimates for the homogenization error}
In this section we adapt Lemma \ref{det.estimate} to the current setting. As in the case of Lemma \ref{det.estimate}, the next result relies on a variation of Lemma \ref{det.KM} that allows us to replace  in the definition \eqref{def.mu.eps} of $\mu_\eps$ the radii $\frac{\eps}{4}$ with $\tilde R_{\eps,z}$ defined in \eqref{def.tilde.R}.

\smallskip
 
We define the oscillating test function $w_\eps \in H^1(D)$ as done in Subsection \ref{sub.quenched.periodic}, this time using the sets $H^\eps_b, H^\eps_g$ and $D^\eps_b$ of Lemma \ref{l.geometry.ppp} with $\delta$ as in Theorem \ref{t.main}, and $R_{\eps,z}$ instead of $\frac \eps 4$ in \eqref{oscillating.good}. We also define the analogues of \eqref{averaged.sum},  this time associated to the covering $\{ K_{k,z} \}_{z \in N_k}$ constructed in the previous subsection: For every $z \in N_k$ we indeed set
\begin{align}\label{averaged.sum.ppp}
S_{k,z}:= \frac{\eps^d}{|K_{k,z}|}\sum_{w \in N_{k,z}}Y_{\eps, w}, \ \ \ \ \ Y_{\eps,w}:= \rho_w^{d-2} \frac{\tilde R_{\eps,w}^{d-2}}{\tilde R_{\eps,w}^{d-2}- \eps^d \rho_w^{d-2}}.
\end{align}

\bigskip

\begin{lem}\label{det.estimate.ppp}
Let $W_\eps$ be as in \eqref{corrector} and let $w_\eps$ be defined as above. Then, for every $\eps >0$ and $k \in \N$ such that $\eps k \leq 1$ 
we have that
\begin{equation}
\begin{aligned}
\| u_\varepsilon - W_\varepsilon u \|_{H^1_0(D)}& \lesssim \biggl( (k\eps)^2 \eps^d \sum_{z \in \Phi_{\delta}^\eps(D)} \rho_z^{2(d-2)} \bigl( \frac{\eps}{\tilde R_{\eps,z}}\bigr)^d + \eps^d \sum_{z \in \Phi^\eps(D) \backslash n^\eps(D)} \rho_z^{d-2} \biggr)^{\frac 1 2}\\
& \quad + \biggl( \avsum_{z \in \mathring{N}_k} (S_{k,z} - \lambda \E\bigl[ \rho^{d-2}\bigr])^2 +   (k\eps)^3 \avsum_{z \in N_k \backslash \mathring{N}_{k}} (S_{k,z}  - \lambda \E_\rho \bigl[ \rho^{d-2} \bigr])^2 \biggr)^{\frac 1 2}\\
&\quad  + \biggl( \eps^{d+\frac{2d}{d+2}}\sum_{z \in N_k} \sum_{w \in N_{k,z}, \atop \eps w \in Q_{k,z} \backslash Q_{k-1,z}} \rho_w^{2(d-2)}\biggr)^{\frac 12}.
\end{aligned}
\end{equation}
\end{lem}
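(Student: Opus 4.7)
The plan is to mirror the proof of Lemma \ref{det.estimate}, tracking the two new features: the replacement of the deterministic radii $\eps/4$ by the random $\tilde R_{\eps,z}$ in the definition of both $w_\eps$ and $\mu_\eps$, and the replacement of the deterministic cubes $\{Q_{k,z}\}_{z \in N_k}$ by the random covering $\{K_{k,z}\}_{z \in N_k}$ built in Subsection \ref{sub.covering.ppp}. As a first step, I would rerun the argument of Lemma \ref{det.KM} verbatim to obtain
\[
\|u_\eps - W_\eps u\|_{H^1_0(D)}^2 \lesssim \|w_\eps - 1\|_{L^2(D)}^2 + \|\nabla(w_\eps - W_\eps)\|_{L^2(D)}^2 + \|\mu_\eps - C_0\|_{H^{-1}(D)}^2,
\]
where now $\mu_\eps = \sum_{z \in \Phi_\delta^\eps(D)} \partial_n w_{\eps,z}\, \delta_{\partial B_{\tilde R_{\eps,z}}(\eps z)}$. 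The derivation only uses that $w_{\eps,z}$ solves \eqref{def.harmonic.annuli} in the annulus $B_{\tilde R_{\eps,z}}(\eps z)\setminus B_{\aeps\rho_z}(\eps z)$, which remains valid by Lemma \ref{l.geometry.ppp}.

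Next, the bounds on $\|w_\eps - 1\|_{L^2(D)}^2$ and $\|\nabla(w_\eps - W_\eps)\|_{L^2(D)}^2$ are obtained exactly as in the periodic case: property \eqref{no.overlapping} guarantees that the balls $B_{\tilde R_{\eps,z}}(\eps z)$ are mutually disjoint, allowing us to apply Poincar\'e's inequality on each one, while the explicit solution \eqref{explicit.formula} and the capacity identity \eqref{def.harmonic.annuli} absorb the contribution of $\nabla(w_\eps^g - W_\eps)$ into $\eps^d\sum_{z \in \Phi^\eps(D)\setminus n^\eps(D)}\rho_z^{d-2}$. Then, I would invoke Lemma \ref{Kohn_Vogelius} with $\mathcal{Z}=\{\eps z\}_{z \in \Phi_\delta^\eps(D)}$, radii $r_z = \tilde R_{\eps,z}$, and covering $\{K_{k,z}\}_{z \in N_k}$; properties \eqref{properties.covering} and \eqref{covering.Poisson} ensure that both \eqref{well.defined.cells} and \eqref{contains.balls} hold. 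The resulting estimate is
\[
\|\mu_\eps - m_k\|_{H^{-1}(D)}^2 \lesssim (k\eps)^2 \eps^d \sum_{z \in \Phi_\delta^\eps(D)} \rho_z^{2(d-2)} \Bigl(\frac{\eps}{\tilde R_{\eps,z}}\Bigr)^d,
\]
with $m_k = c_d \sum_{z \in N_k} S_{k,z}\mathbf{1}_{K_{k,z}}$; the extra factor $(\eps/\tilde R_{\eps,z})^d$ reflects the rescaling of the capacity density per unit volume from the cube of side $\eps$ to the cube of side $\tilde R_{\eps,z}$.

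The main obstacle is the last step: controlling $\|m_k - C_0\mathbf{1}_D\|_{H^{-1}(D)}^2$ when the underlying partition is random. I would split
\[
m_k - C_0\mathbf{1}_D = c_d\sum_{z \in N_k}\bigl(S_{k,z} - \E_\rho[\rho^{d-2}]\bigr)\mathbf{1}_{K_{k,z}} + c_d\E_\rho[\rho^{d-2}]\sum_{z \in N_k}\bigl(\mathbf{1}_{K_{k,z}} - \mathbf{1}_{Q_{k,z} \cap D}\bigr).
\]
The first sum is dual-tested against $\phi \in H^1_0(D)$ exactly as in \eqref{H.minus.5}–\eqref{H.minus.7}, producing the interior average $\avsum_{z \in \mathring N_k}(S_{k,z}-\lambda\E_\rho[\rho^{d-2}])^2$ and the boundary contribution $(k\eps)^3\avsum_{z \in N_k\setminus\mathring N_k}(S_{k,z}-\lambda\E_\rho[\rho^{d-2}])^2$; this uses \eqref{number.N.k}, \eqref{number.boundary.cubes}, \eqref{properties.covering} and Poincar\'e on the tubular neighborhood of $\partial D$ of width $k\eps$. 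The second sum is supported on the symmetric differences $K_{k,z}\triangle Q_{k,z}$, which by \eqref{def.tilde.R}, \eqref{minimal.cube} and \eqref{covering.Poisson} are contained in the union of the cubes $\tilde Q_{\eps,w}$ attached to points $\eps w \in Q_{k,z}\setminus Q_{k-1,z}$, with volume $\lesssim \tilde R_{\eps,w}^d$. A dyadic decomposition of the tubular layer $Q_{k,z}\setminus Q_{k-1,z}$ into the shells $\{2^{n-1}\eps^{1+\kappa}\le\operatorname{dist}(\eps w,\partial Q_{k,z})\le 2^n\eps^{1+\kappa}\}$, combined with the Poincar\'e inequality on each $\tilde Q_{\eps,w}$ and the choice $k\eps=\eps^{d/(d+2)}$ from \eqref{right.regime.ppp}, turns this term into the third quantity of the statement, namely $\eps^{d+2d/(d+2)}\sum_{z\in N_k}\sum_{w\in N_{k,z},\,\eps w\in Q_{k,z}\setminus Q_{k-1,z}}\rho_w^{2(d-2)}$. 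Summing the three contributions yields the claim.
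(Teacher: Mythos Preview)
There is a genuine gap in your first step that propagates to a wrong identification of where the third term comes from. You claim that rerunning Lemma \ref{det.KM} verbatim yields the measure $\mu_\eps$ supported on the spheres $\partial B_{\tilde R_{\eps,z}}(\eps z)$. It does not. The corrector $W_\eps$ of \eqref{corrector} is built from the harmonic functions $w_{\eps,z}$ on the annuli $B_{R_{\eps,z}}(\eps z)\setminus B_{\aeps\rho_z}(\eps z)$ with the \emph{unmodified} radius $R_{\eps,z}$, and the oscillating test function $w_\eps$ of this section is likewise defined with $R_{\eps,z}$ (see the sentence just before \eqref{averaged.sum.ppp}). Smuggling in $-\Delta W_\eps$ as in Lemma \ref{det.KM} therefore produces $\mu_\eps$ on $\partial B_{R_{\eps,z}}(\eps z)$, and with that measure condition \eqref{contains.balls} for the covering $\{K_{k,z}\}$ fails: the balls $B_{R_{\eps,z}}(\eps z)$ with $\eps z$ near $\partial Q_{k,z}$ may straddle two cells, which is precisely the obstruction the whole construction of $\tilde R_{\eps,z}$ was designed to avoid.

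The paper fixes this with the auxiliary corrector $\tilde W_\eps$ (Lemma \ref{det.KM.ppp}): one smuggles in $-\Delta\tilde W_\eps$ instead of $-\Delta W_\eps$, obtaining $\mu_\eps$ on $\partial B_{\tilde R_{\eps,z}}(\eps z)$ at the price of an additional term $\|\nabla(\tilde W_\eps - W_\eps)\|_{L^2}^2$. That extra term is nonzero only for $w\in N_{k,z}$ with $\eps w\in Q_{k,z}\setminus Q_{k-1,z}$ (where $\tilde R_{\eps,w}\neq R_{\eps,w}$), and a direct computation using the explicit formula \eqref{explicit.formula} bounds it by $\eps^{d+\frac{2d}{d+2}}\sum\rho_w^{2(d-2)}$ --- this is the actual source of the third quantity in the statement. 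Your proposed derivation of that quantity from the symmetric differences $K_{k,z}\triangle Q_{k,z}$ in the $H^{-1}$ estimate cannot work: the piece $c_d\E_\rho[\rho^{d-2}]\sum_z(\1_{K_{k,z}}-\1_{Q_{k,z}\cap D})$ is a bounded constant times an indicator and carries no information about the individual marks $\rho_w$, so no amount of dyadic decomposition or Poincar\'e will produce a $\rho_w^{2(d-2)}$ weight from it. Once Lemma \ref{det.KM.ppp} is in place, the bound on $\|m_k-C_0\|_{H^{-1}(D)}$ follows exactly as in the periodic case, using only the volume and diameter estimates \eqref{properties.covering}; the symmetric-difference splitting is unnecessary.
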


\bigskip

\begin{lem}\label{det.KM.ppp}
Let $u_\eps, u$ and $W_\eps$ be as in Lemma \ref{det.estimate} and let $w_\eps$ be as defined above. Then
\begin{align*}
\| u_\varepsilon - W_\varepsilon u \|_{H^1_0(D)}^2& \lesssim \| w_\eps - 1\|_{L^2(D)}^2 + \|\nabla(w_\eps- W_\eps) \|_{L^2(\R^d)}^2\\
& \quad \quad +  \|\nabla(\tilde W_\eps- W_\eps) \|_{L^2(\R^d)}^2 + \| \mu_\eps - C_0 \|_{H^{-1}(D)}^2,
\end{align*}
where $\tilde W_\eps$ is defined as in \eqref{corrector} with $R_{\eps,z}$ substituted by $\tilde R_{\eps,z}$. Furthermore, in this case
\begin{align}\label{def.mu.eps.ppp}
\mu_\eps := \sum_{z \in \Phi_\delta^\eps(D)} \partial_n \tilde w_{\eps,z} \, \delta_{\partial B_{\tilde R_{\eps,z}}(\eps z)},
\end{align}
with $\tilde w_{\eps,z}$ as in \eqref{def.harmonic.annuli} with $\tilde R_{\eps,z}$ instead of $R_{\eps,z}$.
\end{lem}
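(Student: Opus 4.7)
The plan is to mimic the proof of Lemma \ref{det.KM} verbatim, with the auxiliary corrector $\tilde W_\eps$ taking the place of $W_\eps$ in the smuggling step. The reason for this substitution is that the measure $\mu_\eps$ defined in \eqref{def.mu.eps.ppp} is supported on the spheres $\partial B_{\tilde R_{\eps,z}}(\eps z)$, so it is $-\Delta \tilde W_\eps$ (and not $-\Delta W_\eps$) whose action naturally produces $\mu_\eps$ when tested against functions vanishing on $H^\eps$. The additional term $\|\nabla(\tilde W_\eps - W_\eps)\|_{L^2(\R^d)}^2$ in the statement is then precisely the price of switching back from $\tilde W_\eps$ to $W_\eps$ via the triangle inequality at the very end.

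Concretely, I would start from the distributional identity \eqref{distributional.laplacian} for $-\Delta(u_\eps - w_\eps u)$ in $D^\eps$ but smuggle in $(-\Delta \tilde W_\eps)u$ in place of $(-\Delta W_\eps)u$. By Lemma \ref{l.geometry.ppp} together with the construction in \eqref{def.tilde.R}--\eqref{minimal.cube}, the balls $\{B_{\tilde R_{\eps,z}}(\eps z)\}_{z \in \Phi^\eps_\delta(D)}$ are pairwise disjoint and each $\tilde w_{\eps,z}$ is harmonic in $B_{\tilde R_{\eps,z}}(\eps z) \setminus B_{\aeps \rho_z}(\eps z)$, so that
\[
-\Delta \tilde W_\eps = \sum_{z \in \Phi_\delta^\eps(D)} \partial_n \tilde w_{\eps,z}\bigl(\1_{\partial B_{\tilde R_{\eps,z}}(\eps z)} - \1_{\partial B_{\aeps\rho_z}(\eps z)}\bigr).
\]
For every $\phi \in H^1_0(D^\eps)$, which vanishes on each inner sphere $\partial B_{\aeps\rho_z}(\eps z) \subset H^\eps$, one obtains $\langle -\Delta \tilde W_\eps; u\phi \rangle = -\langle \mu_\eps; u\phi \rangle$. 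Testing the resulting weak formulation against $\phi = u_\eps - w_\eps u$ and applying H\"older's and Poincar\'e's inequalities exactly as in the proof of Lemma \ref{det.KM} yields
\[
\|u_\eps - w_\eps u\|_{H^1_0(D)}^2 \ls \|u\|_{W^{2,\infty}}^2 \bigl(\|w_\eps - 1\|_{L^2(D)}^2 + \|\nabla(w_\eps - \tilde W_\eps)\|_{L^2(D)}^2 + \|\mu_\eps - C_0\|_{H^{-1}(D)}^2\bigr).
\]

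To conclude, I would apply the triangle inequality $\|\nabla(w_\eps - \tilde W_\eps)\|_{L^2} \leq \|\nabla(w_\eps - W_\eps)\|_{L^2} + \|\nabla(\tilde W_\eps - W_\eps)\|_{L^2}$ to produce the third term on the right-hand side of the statement, and then pass from $u_\eps - w_\eps u$ to $u_\eps - W_\eps u$ exactly as at the end of the proof of Lemma \ref{det.KM}, using that $W_\eps - w_\eps$ is supported in a neighborhood of $\partial D$. The main delicate point I anticipate is verifying that the distributional formula for $-\Delta \tilde W_\eps$ pairs correctly with $u\phi$ for $\phi \in H^1_0(D^\eps)$; this requires each sphere $\partial B_{\tilde R_{\eps,z}}(\eps z)$ to lie strictly inside $D$, which follows from $\tilde R_{\eps,z} \leq R_{\eps,z} \leq \eps/4$ together with $\eps z \in D$. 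Beyond this subtlety, the argument is essentially a bookkeeping exercise that reproduces the proof of Lemma \ref{det.KM} almost line by line.
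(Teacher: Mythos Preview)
Your proposal is correct and follows exactly the same approach as the paper's own proof, which simply says to argue as in Lemma \ref{det.KM} but smuggle in $-\Delta \tilde W_\eps$ instead of $-\Delta W_\eps$, and then apply the triangle inequality $\|\nabla(\tilde W_\eps - w_\eps)\|_{L^2} \leq \|\nabla(W_\eps - w_\eps)\|_{L^2} + \|\nabla(\tilde W_\eps - W_\eps)\|_{L^2}$. The only minor slip is a sign: from the distributional formula you wrote one gets $\langle -\Delta \tilde W_\eps; u\phi\rangle = +\langle \mu_\eps; u\phi\rangle$ (equivalently $\langle \Delta \tilde W_\eps; u\phi\rangle = -\langle \mu_\eps; u\phi\rangle$), which is harmless for the estimate.
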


{
\begin{proof}[Proof of Lemma \ref{det.estimate.ppp}]
 Analogously to the proof of Lemma \ref{det.estimate}, we appeal to Lemma \ref{det.KM.ppp} and reduce to showing that
\begin{align}\label{L.2.norm.ppp}
\| \nabla( W_\eps - w_\eps) \|_{L^2(D)}^2 + \| w_{\eps} - 1 \|_{L^2(D)}^2 &\lesssim  \eps^{d+2} \sum_{z \in n_\eps(D)} \rho_z^{d-2} + \eps^d \sum_{\Phi^\eps(D) \backslash n^\eps(D)} \rho_z^{d-2},\\
  \|\nabla(\tilde W_\eps- W_\eps) \|_{L^2(\R^d)}^2 &\lesssim \eps^{d + \frac{2d}{d+2}}\sum_{z \in N_k} \sum_{w \in N_{k,z}, \atop \eps w \in Q_{k,z} \backslash Q_{k-1,z}} \rho_w^{2(d-2)} \label{comparison.correctors}
\end{align}
and 
\begin{equation} \label{H.minus.ppp}
\begin{aligned}
 \| \mu_\eps - C_0\|_{H^{-1}(D)}^2  &\lesssim (k\eps)^2 \eps^d \sum_{z \in \Phi_{\delta}^\eps(D)} \rho_z^{2(d-2)}  \bigl( \frac{\eps}{\tilde R_{\eps,z}}\bigr)^d\\
 &\quad +  (k\eps)^3 \avsum_{z \in N_k \backslash \mathring{N}_{k}} (S_{k,z}  - \E_\rho \bigl[ \rho^{d-2} \bigr])^2 +  \avsum_{z \in \mathring{N_k}} (S_{k,z} -  \lambda \E\bigl[ \rho^{d-2}\bigr] \bigr)^2.
  \end{aligned}
 \end{equation}

\smallskip

Inequality \eqref{L.2.norm.ppp} may be argued exactly as done for \eqref{L.2.norm} in the proof of Lemma \ref{det.estimate}, this time appealing to Lemma \ref{l.geometry.ppp} instead of Lemma \ref{l.geometry.periodic}. 

\smallskip

We thus turn to \eqref{comparison.correctors}. We begin by remarking that $\tilde W_{\eps}$ is well-defined: Indeed, by definition \eqref{minimal.distance} and \eqref{def.tilde.R}, we have that $\tilde R_{\eps,z} \leq R_{\eps,z} \leq \frac \eps 4$ for every $z \in \Phi^\eps_\delta(D)$. Furthermore, since $\kappa < \delta$ (c.f. \eqref{right.regime.ppp} and Theorem \ref{t.main}), it follows from \eqref{thinning.psi} that $ 2 \aeps \rho_z \leq \tilde R_{\eps,z}$, for  every $z\in \Phi^\eps_\delta(D)$. Therefore, comparing the two definitions of $W_\eps$ and $\tilde W_\eps$, we use \eqref{def.tilde.R} to bound:
\begin{align}\label{comparison.correctors.1}
\| \nabla( \tilde W_\eps- W_\eps)\|_{L^2}^2 = \sum_{z \in N_k} \sum_{w \in N_{k,w}, \atop \eps w \in Q_{k} \backslash Q_{k-1}}\| \nabla( \tilde W_\eps- W_\eps)\|_{L^2(B_{R_{\eps,w}}(\eps w))}^2.
\end{align}
Since, if $R_{\eps,w} \neq \tilde R_{\eps,w}$, then $\eps^{1+\kappa}\leq \tilde R_{\eps,w} \leq R_{\eps,w}$, we have that
\begin{align}
\| \nabla( \tilde W_\eps- W_\eps)\|_{L^2(B_{R_{\eps,w}}(\eps w)}^2 &\leq \int_{B_{R_{\eps,w}}(\eps w) \backslash B_{\eps^{1+\kappa}}(\eps w)} |\nabla W_\eps|^2 \\
&\quad \quad + \int_{B_{R_{\eps,w}}(\eps w)\backslash B_{\aeps \rho_w}(\eps w)}|\nabla(W_\eps-\tilde W_\eps)|^2.
\end{align}
 Appealing to \eqref{corrector}, \eqref{def.harmonic.annuli} and the adaptation of \eqref{explicit.formula} for both $\tilde W_\eps$ and $W_\eps$, the previous integrals may be bounded by
 \begin{align}
\| \nabla( \tilde W_\eps- W_\eps)\|_{L^2(B_{R_{\eps,w}}(\eps w)}^2 \lesssim \eps^{d} \rho_w^{2(d-2)} \eps^{2-(d-2)\kappa} + \eps^{d}\rho_w^{3(d-2)} \eps^{2(2-(d-2)\kappa)}
\end{align}
Since $w \in \Phi^\eps_\delta(D)$, we have that $\rho_w \leq \eps^{-\frac{2}{d-2}+\delta}$ so that
 \begin{align}
\| \nabla( \tilde W_\eps- W_\eps)\|_{L^2(B_{R_{\eps,w}}(\eps w)}^2& \lesssim \eps^{d} \rho_w^{2(d-2)} \eps^{2-(d-2)\kappa} + \eps^{d}\rho_w^{2(d-2)} \eps^{2-2(d-2)\kappa + (d-2)\delta}\\
& \stackrel{\delta > \kappa}{ \lesssim}  \eps^{d} \rho_w^{2(d-2)} \eps^{2-(d-2)\kappa}.
\end{align}
Inserting this into \eqref{comparison.correctors.1} and appealing to \eqref{right.regime.ppp} for $\kappa$ yields \eqref{comparison.correctors}.

\smallskip

We finally tackle \eqref{H.minus.ppp}: As done for \eqref{H.minus} of Lemma \ref{det.estimate}, we aim at applying Lemma \ref{Kohn_Vogelius}. We thus pick $\mathcal{Z}= \Phi^\eps_\delta(D)$ and $\mathcal{X}= \{ \aeps \rho_z \}_{z \in \mathcal{Z}}, \mathcal{R}:=\{ \tilde R_{\eps,z}\}_{z \in \mathcal{Z}}$. As shown above in the argument for \eqref{comparison.correctors}, condition \eqref{well.defined.cells} is satisfied. Moreover, thanks to \eqref{covering.Poisson}, the collection $\{ K_{k,z} \}_{z \in N_k}$ satisfies \eqref{contains.balls}. Hence, by Lemma \ref{Kohn_Vogelius}, we have that
\begin{align*}
\|  \mu_\eps - m_k \|_{H^{-1}}& \lesssim  \max_{z \in N_k} \bigl(\mathop{diam}(K_{k,z})\bigr) \bigl(\eps^d \sum_{z \in \Phi_{\delta}^\eps(D)} \rho_z^{2(d-2)}\bigl( \frac{\eps}{\tilde R_{\eps,z}}\bigr)^d  \bigr)^{\frac 1 2}\\
&\stackrel{\eqref{properties.covering}}{\lesssim} \eps k \bigl(\eps^d \sum_{z \in \Phi_{\delta}^\eps(D)} \rho_z^{2(d-2)}\bigl( \frac{\eps}{\tilde R_{\eps,z}}\bigr)^d  \bigr)^{\frac 1 2}
\end{align*}
with 
\begin{align*}
m_k \stackrel{\eqref{averaged.sum.ppp}}{ =}  c_d \sum_{z \in N_k} S_{k,z}  \1_{K_{k, z}}.
\end{align*}
By the triangle inequality it thus only remains to control the norm $\| m_k - \mu \|_{H^{-1}(D)}$. Using \eqref{properties.covering}, this may be done exactly as in the proof of Lemma \ref{det.estimate}. The proof of Lemma \ref{det.estimate.ppp} is complete.
\end{proof}

\begin{proof}[Proof of Lemma \ref{det.KM.ppp}]
This lemma may be argued as done for Lemma \ref{det.KM}. The only difference is that,  in \eqref{distributional.laplacian}, we smuggle in $-\Delta \tilde W_\eps$ instead of $-\Delta W_\eps$ and apply the triangle inequality to bound $\| \nabla (\tilde W_\eps- w_\eps )\|_{L^2} \leq \| \nabla (W_\eps- w_\eps )\|_{L^2} + \| \nabla (\tilde W_\eps- W_\eps )\|_{L^2}$.
\end{proof}

\subsection{Annealed estimates (Proof of Theorem \ref{t.main}, $(b)$)}
{As in case $(a)$ the next lemma provides annealed bounds for some of the quantities appearing in the right-hand side of Lemma \ref{det.estimate.ppp}.}
\begin{lem}\label{l.capacity.average.ppp}
Let $n^\eps(D) \subset \Phi^\eps(D)$ the (random) subset constructed in Lemma \ref{l.geometry.ppp}. Then, there exists a constant $C=C(d, \lambda)$ such that
\begin{align}
\E \bigl[  \eps^d \sum_{z \in \Phi^\eps(D) \backslash n^\eps(D)} \rho_z^{d-2} \bigr] \leq C \eps^{(\frac{2}{d-2}-\delta)\beta}.
\end{align}
\end{lem}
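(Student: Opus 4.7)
The plan is to adapt the argument of Lemma \ref{l.capacity.average} to the Poissonian setting. Since $n^\eps(D) = \Phi^\eps(D)\setminus I^\eps_b$ with $I^\eps_b = J^\eps_b \cup K^\eps_b \cup C^\eps_b \cup \tilde I^\eps_b$ (cf.\ \eqref{definition.bad.index.ppp}), I split
\begin{align*}
\eps^d\!\!\!\!\sum_{z \in \Phi^\eps(D)\setminus n^\eps(D)} \!\!\!\!\rho_z^{d-2} \;\leq\; \eps^d\sum_{z\in J^\eps_b}\rho_z^{d-2} + \eps^d\sum_{z\in K^\eps_b}\rho_z^{d-2} + \eps^d\sum_{z\in C^\eps_b}\rho_z^{d-2} + \eps^d\sum_{z\in \tilde I^\eps_b}\rho_z^{d-2}
\end{align*}
and estimate each term separately via Mecke's formula for the marked Poisson process, exploiting independence of the marks from the positions and the moment condition \eqref{integrability.radii}.

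First, for $J^\eps_b$, Mecke together with $\#\{z\in\R^d: \eps z \in D\} \lesssim \eps^{-d}$ and Chebyshev gives $\E[\eps^d\sum_{J^\eps_b}\rho_z^{d-2}] \lesssim \E_\rho[\rho^{d-2}\1_{\rho\geq \eps^{-\frac{2}{d-2}+\delta}}] \lesssim \eps^{(\frac{2}{d-2}-\delta)\beta}$, exactly as in \eqref{capacity.bad.periodic.sum1}. Second, for $K^\eps_b$, the condition $R_{\eps,z}\leq\eps^2$ means (up to a constant) that another Poisson point lies within distance $\lesssim \eps$ of $z$; by Slivnyak's theorem and independence of marks, this happens with probability $\lesssim \eps^d$, so
\begin{align*}
\E\bigl[\eps^d\sum_{z\in K^\eps_b}\rho_z^{d-2}\bigr] \;\lesssim\; \eps^d\cdot \eps^{-d}\cdot \eps^d\cdot \E_\rho[\rho^{d-2}] \;\lesssim\; \eps^d,
\end{align*}
which is absorbed into $\eps^{(\frac{2}{d-2}-\delta)\beta}$ since $\beta>0$.

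Third, for $C^\eps_b$, we have $z\notin J^\eps_b\cup K^\eps_b$, so $\rho_z\leq \eps^{-\frac{2}{d-2}+\delta}$ and the condition $2\sqrt d\,\aeps\rho_z\geq R_{\eps,z}$ forces another Poisson point within distance $\lesssim \aeps\rho_z/\eps$ of $z$. By Slivnyak and the independence of marks, this has probability $\lesssim (\aeps\rho_z/\eps)^d = \eps^{\frac{2d}{d-2}}\rho_z^d$, hence
\begin{align*}
\E\bigl[\eps^d\sum_{C^\eps_b}\rho_z^{d-2}\bigr] \lesssim \eps^{\frac{2d}{d-2}}\E_\rho\bigl[\rho^{2d-2}\1_{\rho\leq\eps^{-\frac{2}{d-2}+\delta}}\bigr].
\end{align*}
Factoring $\rho^{2d-2} = \rho^{d-2+\beta}\cdot \rho^{d-\beta}$ and using the upper bound on $\rho$ on the support together with \eqref{integrability.radii} yields the desired rate $\eps^{(\frac{2}{d-2}-\delta)\beta}$.

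Finally, for $\tilde I^\eps_b$, I mimic the treatment of $\tilde I^\eps_b$ in Lemma \ref{l.capacity.average}: each $z\in\tilde I^\eps_b$ lies near some $w\in J^\eps_b\cup K^\eps_b\cup C^\eps_b$, so we dominate the sum by pairs
\begin{align*}
\sum_{w\in J^\eps_b\cup K^\eps_b\cup C^\eps_b}\;\;\sum_{z\in \Phi^\eps(D)\setminus\{w\}\atop\eps|z-w|\lesssim \eps + \aeps\rho_w}\rho_z^{d-2}\1_{\rho_z<\eps^{-\frac{2}{d-2}+\delta}}.
\end{align*}
Taking expectations and using the Palm theory of the Poisson process (with independence of the marks), the inner sum has expectation bounded by a constant times $1+\rho_w^{d-2}$, reducing the problem to the already-treated bounds for $J^\eps_b$, $K^\eps_b$, $C^\eps_b$. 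The main delicate point is the $\tilde I^\eps_b$ step, where one must carefully separate the Palm distribution contributions of the two points to avoid picking up spurious factors of $\eps^{-d}$ from the volume of $D$; this is handled by conditioning first on the ``primary'' bad point $w$ and then using Mecke once more for the inner sum, whose intensity measure is controlled by $\lambda\cdot|B_{\eps + \aeps\rho_w\wedge 1}(0)|/\eps^d \lesssim 1 + \rho_w^{d-2}$.
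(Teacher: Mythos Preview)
Your approach matches the paper's: the same four-fold decomposition $I^\eps_b = J^\eps_b \cup K^\eps_b \cup C^\eps_b \cup \tilde I^\eps_b$, the same use of Mecke/Slivnyak (the paper's Lemma~\ref{product.measure}) to reduce to single-point estimates, and the same Chebyshev-type bounds for the marks. Your treatment of $J^\eps_b$, $K^\eps_b$ and $C^\eps_b$ is essentially identical to the paper's.

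There is one point in the $\tilde I^\eps_b$ step that you leave vague and that does require care. You write that after conditioning on the primary bad point $w$ ``the inner sum has expectation bounded by a constant times $1+\rho_w^{d-2}$, reducing the problem to the already-treated bounds''. This factoring is fine when $w \in J^\eps_b$, since that event depends only on $\rho_w$ and is therefore independent of the remaining configuration. But for $w \in K^\eps_b \cup C^\eps_b$ the indicator $\1_{R_{\eps,w} \le 2\sqrt d\,\aeps\rho_w \vee \eps^2}$ depends on the \emph{same} nearby Poisson points that appear in the inner sum $\sum_z \rho_z^{d-2}$, so the two factors are correlated and one cannot simply multiply the bounds. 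The paper resolves this by first noting that for such $w$ one has $\aeps\rho_w < \eps^{1+\delta}$, so the inner count is over points in a ball of bounded radius (in unscaled coordinates), and then applying H\"older with exponents $(3/2,3)$ to separate $\1_{R_{\eps,0}\le \cdots}$ from $\#\{z:|z|\le 4\}$; the Poisson moments of the latter are finite and the former has probability $\lesssim \eps^d + \eps^{\frac{2d}{d-2}}\rho^d$. Your iterated-Mecke idea can also be made to work, but you must account for the fact that the second Mecke step evaluates $\1_{w\in \text{bad}}$ on the configuration $\Phi\cup\{z\}$, and bound $\1_{R_{\eps,w}(\Phi\cup\{z\})\le r} \le \1_{R_{\eps,w}(\Phi)\le r} + \1_{|z-w|\le 4r/\eps}$ before integrating in $z$. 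Either way the missing sentence is short, but as written your ``reducing to the already-treated bounds'' hides a genuine correlation issue.
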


\begin{proof}[Proof of Theorem \ref{t.main}, $(b)$]
We recall that $k$ satisfies \eqref{right.regime.ppp}. Combining Lemma \ref{det.estimate.ppp} and Lemma \ref{l.capacity.average.ppp}, we bound
\begin{equation*}
\begin{aligned}
\E \bigl[ \| u_\varepsilon - \tilde W_\varepsilon u \|_{H^1_0(D)}^2 \bigr] &\lesssim  (k\eps)^2  \E \bigl[ \eps^d \sum_{z \in \Phi_{\delta}^\eps(D)} \rho_z^{2(d-2)} \bigl( \frac{\eps}{\tilde R_{\eps,z}}\bigr)^d \bigr]\\
 &\quad \quad +  \E  \bigl[ \avsum_{z \in \mathring{N}_k} (S_{k,z} - \lambda\E \bigl[ \rho^{d-2} \bigr])^2 \bigr] +   (k\eps)^3 \avsum_{z \in N_k \backslash \mathring{N}_{k}} (S_{k,z}  - \lambda \E_\rho \bigl[ \rho^{d-2} \bigr])^2 \\
 &\quad \quad + \eps^{(\frac{2}{d-2}-\delta)\beta} + \eps^{d +\frac{2d}{d+2}} \E \bigl[ \sum_{z \in N_k} \sum_{w \in N_{k,z}, \atop \eps w \in Q_{k,z} \backslash Q_{k-1,z}} \rho_w^{2(d-2)}\bigr]
\end{aligned}
\end{equation*}
As done in the proof of Theorem \ref{t.main}, this also turns into
\begin{equation}\label{main.1.ppp}
\begin{aligned}
\E \bigl[ \| u_\varepsilon - \tilde W_\varepsilon u \|_{H^1_0(D)}^2 \bigr] &\lesssim  (k\eps)^2  \E \bigl[ \eps^d \sum_{z \in \Phi_{\delta}^\eps(D)} \rho_z^{2(d-2)} \bigl( \frac{\eps}{\tilde R_{\eps,z}}\bigr)^d \bigr]\\
 &\quad \quad +  \E  \bigl[(S_{k,0} - \lambda\E \bigl[ \rho^{d-2} \bigr])^2 \bigr] +   (k\eps)^3 \avsum_{z \in N_k \backslash \mathring{N}_{k}} (S_{k,z}  - \lambda \E_\rho \bigl[ \rho^{d-2} \bigr])^2 \\
 &\quad \quad +  \eps^{(\frac{2}{d-2}-\delta)\beta}+ \eps^{d +\frac{2d}{d+2}} \E \bigl[ \sum_{z \in N_k} \sum_{w \in N_{k,z}, \atop \eps w \in Q_{k,z} \backslash Q_{k-1,z}} \rho_w^{2(d-2)}\bigr].
\end{aligned}
\end{equation}

\smallskip

We now claim that, thanks to \eqref{right.regime.ppp}, the previous estimate reduces to
\begin{equation}
\begin{aligned}\label{main.2.ppp}
\E \bigl[ \| u_\varepsilon - \tilde W_\varepsilon u \|_{H^1_0(D)}^2 \bigr] &\lesssim \bigl( |\log \eps| (k\eps)^2 + k^{-d}\bigr) \E \bigl[\rho^{2(d-2)}\1_{\rho <  -\frac{2}{d-2} +\delta} \bigr]\\
&\quad  + \eps^{(\frac{2}{d-2}-\delta)\beta} +  k^{-2}\eps^{ \frac{2}{(d+2)(d-1)}}.
\end{aligned}
\end{equation}
If the previous estimate holds, by the choice of $\delta$ and \eqref{right.regime.ppp}, we infer that
\begin{equation*}
\begin{aligned}
\E \bigl[ \| u_\varepsilon - \tilde W_\varepsilon u \|_{H^1_0(D)}^2 \bigr] &\lesssim |\log \eps | \eps^{\frac{2d}{d^2-4} \beta \wedge (d-2)} + \eps^{ \frac{2}{(d+2)}( \frac{2}{d-1} + 2)}.
\end{aligned}
\end{equation*}
If $d \leq 3$, we have that 
\begin{align}\label{dimensional.constraint}
\eps^{ \frac{2}{(d+2)}( \frac{2}{d-1} + 2)} \lesssim |\log \eps | \eps^{\frac{2d}{d^2-4} \beta \wedge (d-2)}, \ \ \ \text{for every $\beta> 0$.}
\end{align}
This establishes Theorem \ref{t.main}, $(b)$.

\bigskip

To conclude the proof, we only need to obtain \eqref{main.2.ppp} from \eqref{main.1.ppp}. Arguing as for \eqref{main.4} in the proof of Theorem \ref{t.main}, $(a)$, we reduce to
\begin{equation}\label{main.3.ppp}
\begin{aligned}
\E \bigl[ \| u_\varepsilon - \tilde W_\varepsilon u \|_{H^1_0(D)}^2 \bigr] &\lesssim  (k\eps)^2  \E \bigl[ \eps^d \sum_{z \in \Phi_{\delta}^\eps(D)} \rho_z^{2(d-2)} \bigl( \frac{\eps}{\tilde R_{\eps,z}}\bigr)^d \bigr] +  \E  \bigl[(S_{k,0} - \lambda\E \bigl[ \rho^{d-2} \bigr])^2 \bigr]\\
&\quad\quad  +  \eps^{(\frac{2}{d-2}-\delta)\beta}  +  \eps^{d +\frac{2d}{d+2}} \E \bigl[ \sum_{z \in N_k} \sum_{w \in N_{k,z}, \atop \eps w \in Q_{k,z} \backslash Q_{k-1,z}} \rho_w^{2(d-2)}\bigr].
\end{aligned}
\end{equation}
This implies inequality \eqref{main.2.ppp} provided that
\begin{align}\label{main.comparison}
 \eps^{d +\frac{2d}{d+2}} \E \bigl[ \sum_{z \in N_k} \sum_{w \in N_{k,z}, \atop \eps w \in Q_{k,z} \backslash Q_{k-1,z}} \rho_w^{2(d-2)}\bigr]\lesssim (\eps k)^2 \E\bigl[\rho^{2(d-2)} \1_{\rho < \eps^{-\frac{2}{d-2} +\delta}} \bigr],
\end{align}
\begin{align}\label{main.4.ppp}
 \E \bigl[ \eps^d \sum_{z \in \Phi_{\delta}^\eps(D)} \rho_z^{2(d-2)} \bigl( \frac{\eps}{\tilde R_{\eps,z}}\bigr)^d \bigr] \lesssim |\log \eps| \E \bigl[\rho^{2(d-2)}\1_{\rho <  -\frac{2}{d-2} +\delta} \bigr]
\end{align}
and
\begin{align}\label{main.5.ppp.2}
\E  \bigl[(S_{k,0} - \lambda\E \bigl[ \rho \bigr])^2 \bigr]& \lesssim k^{-d}  \E\bigl[ \rho^2 \1_{\rho < \eps^{-\frac{2}{d-2} +\delta}}\bigr]+ \eps^{(\frac{2}{d-2}-\delta)\beta} +  k^{-2}\eps^{\frac{4}{(d+2)(d-1)}}.
\end{align}

\smallskip

We argue \eqref{main.4.ppp}: Recalling the definition of the covering $\{ Q_{k,z} \}_{z \in N_k}$, we decompose
$$
D \subset \bigcup_{z \in N_k} \bigl( Q_{k-1, z} \cup (Q_{k,z} \backslash Q_{k-1,z})\bigr)
$$
and rewrite
\begin{align}
 \E \bigl[ \eps^d \sum_{z \in \Phi_{\delta}^\eps(D)}& \rho_z^{2(d-2)} \bigl( \frac{\eps}{\tilde R_{\eps,z}}\bigr)^d \bigr]\\
 & = \E \bigl[ \eps^d \sum_{w \in N_k}\biggl( \sum_{z \in \Phi_{\delta}^\eps(D), \atop \eps z \in Q_{k-1,w}} \rho_z^{2(d-2)} \bigl( \frac{\eps}{\tilde R_{\eps,z}}\bigr)^d + \sum_{z \in \Phi_{\delta}^\eps(D), \atop \eps z \in Q_{k,w}\backslash Q_{k-1,w}} \rho_z^{2(d-2)} \bigl( \frac{\eps}{\tilde R_{\eps,z}}\bigr)^d  \biggr) \bigr] .
 \end{align}
Since the process $(\Phi , \mathcal{R})$ is stationary, we bound
 \begin{equation}
 \begin{aligned}\label{main.5.ppp}
 \E \bigl[ \eps^d \sum_{z \in \Phi_{\delta}^\eps(D)}& \rho_z^{2(d-2)} \bigl( \frac{\eps}{\tilde R_{\eps,z}}\bigr)^d \bigr]\\
 &\stackrel{\eqref{number.N.k}}{\lesssim} k^{-d}\E \bigl[  \sum_{z \in \Phi_{\delta}^\eps(D), \atop \eps z \in Q_{k-1,0}} \rho_z^{2(d-2)} \bigl( \frac{\eps}{\tilde R_{\eps,z}}\bigr)^d + \sum_{z \in \Phi_{\delta}^\eps(D), \atop \eps z \in Q_{k,0}\backslash Q_{k-1,0}} \rho_z^{2(d-2)} \bigl( \frac{\eps}{\tilde R_{\eps,z}}\bigr)^d   \bigr].
 \end{aligned}
 \end{equation}
 Let us partition the cube $Q_{k,0}$ into $k^{d}$ cubes of size $\eps$ and let $Q$ be as in \eqref{cubes.meso}; the definitions of $\Phi^\eps_\delta(D)$ and $\tilde R_{\eps,z}$ (c.f. \eqref{thinning.psi}, \eqref{def.tilde.R}) and the stationarity of $(\Phi, \mathcal{R})$ imply that
\begin{align}\label{example.product}
k^{-d}\E \bigl[  \sum_{z \in \Phi_{\delta}^\eps(D), \atop \eps z \in Q_{k-1,0}} \rho_z^{2(d-2)} \bigl( \frac{\eps}{\tilde R_{\eps,z}}\bigr)^d \bigr] \lesssim 
 \E \bigl[\sum_{z \in \Phi(Q)} \rho_z^{2(d-2)} \1_{\rho_z \leq \eps^{-\frac{2}{d-2} +\delta}} \1_{R_{\eps,z} \geq \eps^2} \bigl( \frac{\eps}{R_{\eps,z}}\bigr)^d \bigr].
\end{align}
We now apply Lemma \ref{product.measure} with $G((x,\rho); \omega) =  \bigl(\frac{\eps}{R_{\eps,x}}\bigr)^d \1_{R_{x,\eps} \geq \eps^2} \rho^{2(d-2)} \1_{\rho<\eps^{-\frac{2}{d-2} +\delta}}$ to infer that
\begin{align}
\E \bigl[ \eps^d \sum_{z \in \Phi_{\delta}^\eps(D)} \rho_z^{2(d-2)} \bigl( \frac{\eps}{\tilde R_{\eps,z}}\bigr)^d \bigr] \lesssim \E_\rho \bigl[ \rho_z^{2(d-2)} \1_{\rho_z \leq \eps^{-\frac{2}{d-2} +\delta}} \bigr] \E_\Phi \bigl[ \bigl( \frac{\eps}{R_{\eps,0}}\bigr)^d \1_{R_{\eps,0} \geq \eps^2} \bigr].
\end{align}
By definition of $R_{\eps,z}$ (see \eqref{minimal.distance}) it follows from the properties of the Poisson point process that
\begin{align}\label{log.term}
\E_\phi \bigl[ \1_{R_{\eps,0} > \eps^2}\bigl( \frac{\eps}{R_{\eps,0}}\bigr)^d \bigr] \lesssim |\log \eps|.
\end{align}
Hence, 
$$
k^{-d}\E \bigl[  \sum_{z \in \Phi_{\delta}^\eps(D), \atop \eps z \in Q_{k-1,0}} \rho_z^{2(d-2)} \bigl( \frac{\eps}{\tilde R_{\eps,z}}\bigr)^d \bigr] \lesssim  |\log \eps| \E_\rho \bigl[ \rho_z^{2(d-2)} \1_{\rho_z \leq \eps^{-\frac{2}{d-2} +\delta}} \bigr] 
$$
and \eqref{main.5.ppp} turns into 
 \begin{align}
 \E \bigl[ \eps^d \sum_{z \in \Phi_{\delta}^\eps(D)}& \rho_z^{2(d-2)} \bigl( \frac{\eps}{\tilde R_{\eps,z}}\bigr)^d \bigr]\\
 & \lesssim |\log\eps| \E_\rho \bigl[ \rho_z^{2(d-2)} \1_{\rho_z \leq \eps^{-\frac{2}{d-2} +\delta}} \bigr]  + k^{-d}\E\bigl[ \sum_{z \in \Phi_{\delta}^\eps(D), \atop \eps z \in Q_{k,0}\backslash Q_{k-1,0}} \rho_z^{2(d-2)} \bigl( \frac{\eps}{\tilde R_{\eps,z}}\bigr)^d   \bigr].
 \end{align}

\smallskip

We now tackle the remaining term in the inequality above and claim that, thanks to \eqref{right.regime.ppp},  we have that
\begin{align}\label{main.6.ppp}
 k^{-d}\E\bigl[ \sum_{z \in \Phi_{\delta}^\eps(D), \atop \eps z \in Q_{k,0}\backslash Q_{k-1,0}} \rho_z^{2(d-2)} \bigl( \frac{\eps}{\tilde R_{\eps,z}}\bigr)^d \bigr] \lesssim  \E_\rho \bigl[ \rho_z^{2(d-2)} \1_{\rho_z \leq \eps^{-\frac{2}{d-2} +\delta}} \bigr].
\end{align}
Let $Q_{r}$ be the cube of size $r>0$ centred at the origin. Using \eqref{thinning.psi}, indeed, we bound
\begin{align}
k^{-d}\E\bigl[ \sum_{z \in \Phi_{\delta}^\eps(D), \atop \eps z \in Q_{k,0}\backslash Q_{k-1,0}} \rho_z^{2(d-2)} \bigl( \frac{\eps}{\tilde R_{\eps,z}}\bigr)^d   \bigr] \leq k^{-d}\E\bigl[ \sum_{z \in \Phi( Q_{k}\backslash Q_{k-1})} \rho_z^{2(d-2)} \bigl( \frac{\eps}{\tilde R_{\eps,z}}\bigr)^d \1_{R_{\eps,z} \geq \eps^2} \1_{\rho_z < \eps^{-\frac{2}{d-2} + \delta}} \bigr].
\end{align}
Since we may decompose  the set $Q_k \backslash Q_{k-1}$ into $\lesssim k^{d-1}$ unitary cubes, we use again the stationarity of $(\Phi; \mathcal{R})$ and infer that
\begin{align}\label{logarithmic.1}
k^{-d}\E\bigl[ \sum_{z \in \Phi_{\delta}^\eps(D), \atop \eps z \in Q_{k,0}\backslash Q_{k-1,0}} &\rho_z^{2(d-2)} \bigl( \frac{\eps}{\tilde R_{\eps,z}}\bigr)^d   \bigr]\\
& \leq k^{-1} \E\bigl[ \sum_{z \in \Phi(Q_1)} \rho_z^{2(d-2)} \bigl( \frac{\eps}{\tilde R_{\eps,z}}\bigr)^d \1_{R_{\eps,z} \geq \eps^2} \1_{\rho_z < \eps^{-\frac{2}{d-2} + \delta}} \bigr],
\end{align}
where $Q_1$ is any unitary cube that is contained in $Q_k \backslash Q_{k-1}$\footnote{In principle, in this last step one should distinguish between unitary cubes according to the number of faces that they share with the boundary. However, the argument shown below for the term associated to $Q_1$ may be easily adapted to any of the previous cubes. }.  We now decompose 
\begin{align}
Q_1 &= \sum_{n=1}^{ \lceil -\kappa\log\eps \rceil}A_n,\\
 A_n:= \{ x \in Q_1 \, \colon \, 2^n \eps^\kappa \leq \mathop{dist}(x ; \partial Q_k ) &\leq 2^{n+1}\eps^\kappa \}, \ \ \ \ \ A_0:= \{ x \in Q_1 \, \colon \, \mathop{dist}(x ; \partial Q_k ) \leq \eps^\kappa \}.
\end{align}  
and use \eqref{def.tilde.R} to rewrite
\begin{align}
\E\bigl[ \sum_{z \in \Phi(Q_1)} \rho_z^{2(d-2)} \bigl( \frac{\eps}{\tilde R_{\eps,z}}\bigr)^d &\1_{R_{\eps,z} \geq \eps^2} \1_{\rho_z < \eps^{-\frac{2}{d-2} + \delta}} \bigr]\\
&\lesssim \sum_{n=0}^{ \lceil -\kappa\log\eps \rceil} \E\bigl[ \sum_{z \in \Phi(A_n)} \rho_z^{2(d-2)} \bigl( \frac{\eps}{ R_{\eps,z} \wedge 2^n\eps^{1+\kappa}}\bigr)^d \1_{R_{\eps,z} \geq \eps^2} \1_{\rho_z < \eps^{-\frac{2}{d-2} + \delta}} \bigr]
\end{align}
We now appeal again to Lemma \ref{product.measure} as for \eqref{example.product} to reduce to
\begin{align}
\E\bigl[ \sum_{z \in \Phi(Q_1)} \rho_z^{2(d-2)} \bigl( \frac{\eps}{\tilde R_{\eps,z}}\bigr)^d &\1_{R_{\eps,z} \geq \eps^2} \1_{\rho_z < \eps^{-\frac{2}{d-2} + \delta}} \bigr]\\
&\lesssim \sum_{n=0}^{ \lceil -\kappa\log\eps \rceil} \E_\rho\bigl[  \rho^{2(d-2)} \1_{\rho < \eps^{-\frac{2}{d-2} + \delta}}\bigr] |A_n| \E\bigl[ \bigl( \frac{\eps}{ R_{\eps,0} \wedge 2^n\eps^{1+\kappa}}\bigr)^d \1_{R_{\eps, 0} \geq \eps^2}  \bigr].
\end{align}
Arguing as for \eqref{log.term} and using the stationarity of $\Phi$ we infer that
\begin{align}
\E\bigl[ \sum_{z \in \Phi(Q_1)} \rho_z^{2(d-2)} \bigl( \frac{\eps}{\tilde R_{\eps,z}}\bigr)^d &\1_{R_{\eps,z} \geq \eps^2} \1_{\rho_z < \eps^{-\frac{2}{d-2} + \delta}} \bigr]\\
&\lesssim \E_\rho\bigl[  \rho^{2(d-2)} \1_{\rho < \eps^{-\frac{2}{d-2} + \delta}}\bigr]  \sum_{n=0}^{ \lceil -\kappa\log\eps \rceil} 2^n \eps^{\kappa} \bigl(2^{-dn} \eps^{-d\kappa} - \log \eps \bigr)\\
&\lesssim  \E_\rho\bigl[  \rho^{2(d-2)} \1_{\rho < \eps^{-\frac{2}{d-2} + \delta}}\bigr]  \eps^{-(d-1)\kappa}.
\end{align}
To establish \eqref{main.6.ppp} it only remains to combine the previous inequality with \eqref{logarithmic.1} and use \eqref{right.regime.ppp}. The proof of  \eqref{main.4.ppp} is therefore complete.

\bigskip

Inequality \eqref{main.comparison} may be obtained in a similar way as to that of \eqref{main.4.ppp}: Since we may decompose the set $\bigcup_{z \in N_k} Q_{k,z}\backslash Q_{k-1,z}$ into $n \lesssim (\eps k)^{-d} k^{d-1}$ disjoint cubes $\{ Q_{\eps,i} \}_{i=1}^n$ of size $\eps$, we use  definition \eqref{thinning.psi} and the stationarity of $(\Phi, \mathcal{R})$ to bound
\begin{align}
\eps^{d + \frac{2d}{d+2}} \E \bigl[ \sum_{z \in N_k} \sum_{w \in N_{k,z}, \atop \eps w \in Q_{k,z} \backslash Q_{k-1,z}} \rho_w^{2(d-2)}\bigr] \lesssim k^{-1}\eps^{\frac{2d}{d+2}}\E\bigl[ \sum_{w \in \Phi(Q)} \rho^{2(d-2)} \1_{\rho < \eps^{-\frac{2}{d-2} +\delta}} \bigr]
\end{align}
so that, again by Lemma \ref{product.measure}, we obtain
\begin{align}
\eps^{d + \frac{2d}{d+2}} \E \bigl[ \sum_{z \in N_k} \sum_{w \in N_{k,z}, \atop \eps w \in Q_{k,z} \backslash Q_{k-1,z}} \rho_w^{2(d-2)}\bigr] \lesssim k^{-1}\eps^{\frac{2d}{d+2}}\E\bigl[ \rho^{2(d-2)} \1_{\rho < \eps^{-\frac{2}{d-2} +\delta}} \bigr]
\end{align}
We establish \eqref{main.comparison} after observing that, thanks to \eqref{right.regime.ppp}, it holds $k^{-1}\eps^{\frac{2d}{d+2}} \leq (\eps k)^2$.

\bigskip

We now tackle \eqref{main.5.ppp.2}: By construction (see definition \eqref{covering.Poisson}), the (random) set $K_{\eps,0}$ satisfies
\begin{align}
\bigl\{ w \in \Phi^\eps_\delta(D) \, \colon \, \eps w \in K_{k,0} \bigr\}= \bigl\{ w \in \Phi^\eps_\delta(D) \, \colon \, \eps w \in Q_{k,0} \bigr\} = \Phi^\eps_\delta(D) \cap \Phi(Q_k),
\end{align}
where $Q_k$ is, as above the cube of size $k$ centred at the origin. Hence, decomposing $Q_k= \sum_{i=1}^{k^d}Q_i$ into unitary cubes, definitions \eqref{averaged.sum.ppp} and \eqref{thinning.psi} allow us to rewrite
\begin{align}
S_{k,0} - \lambda\E \bigl[ \rho^{d-2} \bigr] = \frac{\eps^d}{|K_{k,z}|} \sum_{i=1}^{k^d} Z_i -  \lambda\E \bigl[ \rho \bigr]
\end{align}
with
\begin{align}\label{def.Z.2}
Z_i := \sum_{\Phi(Q_{i})} Y_{\eps,z} \1_{\rho < \eps^{-\frac{2}{d-2}+\delta}} \1_{ R_{\eps,z} \geq 2\aeps \rho_z}, \ \ \ i = 1, \cdots, k^d.
\end{align}
We rewrite
\begin{align}
S_{k,0} - \lambda\E \bigl[ \rho^{d-2} \bigr] = \frac{\eps^d}{|K_{k,z}|} \sum_{i=1}^{k^d} (Z_i -  \lambda\E \bigl[ \rho^{d-2} \bigr]) + \lambda (\frac{\eps^dk^d}{|K_{k,z}|} -1)\E_\rho \bigl[ \rho^{d-2} \bigr]
\end{align}
so that the triangle inequality, assumption \eqref{integrability.radii} and the quenched bounds in \eqref{properties.covering} yield
\begin{align}
(S_{k,0} - \lambda\E \bigl[ \rho^{d-2} \bigr])^2 \lesssim \bigl(\avsum_{i=1}^{k^d} (Z_i -  \lambda\E \bigl[ \rho^{d-2} \bigr]) \bigr)^2 + k^{-2}\eps^{2\kappa}.
\end{align}
Appealing to definitions \eqref{def.Z.2}, \eqref{averaged.sum.ppp} and \eqref{minimal.distance}, we observe that $Z_i$ and $Z_j$ are independent whenever $i, j$ are such that $Q_{j}$ and $Q_{i}$ are not adjacent. Hence, by taking the expectation in the previous inequality, we estimate
\begin{equation}
\begin{aligned}\label{main.6.ppp.c}
\E \biggl[ (S_{k,0} - \lambda\E \bigl[ \rho \bigr])^2\biggr] &\lesssim  k^{-2d} \sum_{i=1}^{k^d} \sum_{j \colon Q_{j}, Q_{i} \text{\, adjacent}}  \E\biggl[ (Z_i -  \lambda\E \bigl[ \rho^{d-2} \bigr])  (Z_j -  \lambda\E \bigl[ \rho^{d-2} \bigr]) \biggr]\\
&+ k^{-d}\sum_{i=1}^{k^d}  \E\biggl[ Z_i -  \lambda\E \bigl[ \rho^{d-2} \bigr] \biggr]^2 +  k^{-2}\eps^{2\kappa}.
\end{aligned}
\end{equation}
To establish \eqref{main.5.ppp.2} from \eqref{main.6.ppp.c} it suffices to bound
\begin{align}\label{LLN.1}
\E\biggl[ (Z_i -  \lambda\E \bigl[ \rho^{d-2} \bigr])  (Z_j -  \lambda\E \bigl[ \rho^{d-2} \bigr]) \biggr] &\lesssim \E\bigl[ \rho^{2(d-2)} \1_{\rho < \eps^{-\frac{2}{d-2}+\delta}}\bigr],\\
 \E\biggl[ Z_i -  \lambda\E \bigl[ \rho^{d-2} \bigr] \biggr]  \lesssim  \eps^{(\frac{2}{d-2}-\delta)\beta}. \label{LLN.2}
\end{align}

\bigskip

Inequality \eqref{LLN.1} immediately follows from Cauchy-Schwarz's inequality, the triangle inequality and definitions \eqref{def.Z.2} and \eqref{averaged.sum.ppp}. Again by \eqref{def.Z.2} and \eqref{averaged.sum.ppp}, for every $i=1, \cdots, k^d$, we have that
\begin{align}
 \E\biggl[ Z_i -  \lambda\E \bigl[ \rho^{d-2} \bigr] \biggr] &= {\E\bigl[  \sum_{\Phi(Q)} \rho_z^{d-2} \1_{\rho_z < \eps^{-\frac{2}{d-2}+\delta}} \1_{R_{\eps,z} \geq \eps^2 \vee \aeps \rho_z}\bigr]}-  \lambda\E \bigl[ \rho^{d-2} \bigr]\\
&\quad\quad   + \E\bigl[  \sum_{\Phi(Q)}\eps^d \frac{\rho_z^{2(d-2)}}{\tilde R_{\eps,z}^{d-2} - \eps^d \rho_z^{d-2}} \1_{\rho_z < \eps^{-\frac{2}{d-2}+\delta}}   \1_{R_{\eps,z} \geq \eps^2 \vee \aeps \rho_z}\bigr].
\end{align}
Observing that 
$$
\E \bigl[ \sum_{\Phi(Q)} \rho_z^{d-2} \bigr] = \lambda\E \bigl[ \rho^{d-2} \bigr]
$$
and writing
\begin{align}
\1_{\rho < \eps^{-\frac{2}{d-2}+\delta}}  \1_{R_{\eps,z} \geq \eps^2 \vee \aeps \rho_z} &= 1- \1_{\rho_z \geq \eps^{-\frac{2}{d-2}+\delta}} -   \1_{\rho_z \leq \eps^{-\frac{2}{d-2}+\delta}} \1_{R_{\eps,z} \leq 2\aeps\rho_z \vee \eps^2},
\end{align}
we infer that
\begin{align}
| \E\biggl[ Z_i -  \lambda\E \bigl[ \rho^{d-2} \bigr] \biggr] | & \lesssim \E\bigl[  \sum_{\Phi (Q)} \rho_z^{d-2}  \1_{\rho_z < \eps^{-\frac{2}{d-2}+\delta}} \1_{R_{\eps,z} \leq \aeps \rho \vee \eps^2} \bigr]+ \E\bigl[\rho^{d-2} \1_{\rho \geq \eps^{-\frac{2}{d-2}+\delta}}\bigr]\\
&\quad\quad   + \eps^d \E\bigl[  \sum_{\Phi(Q)} \biggl(\frac{\rho_z^{2}}{\tilde R_{\eps,z}}\biggr)^{d-2} \1_{\rho < \eps^{-2+\delta}} \1_{R_{\eps,z} \geq \aeps \rho_z \vee \eps^2}\bigr].
\end{align}
We now appeal to Lemma \ref{product.measure} with $G((x,\rho); \omega)= \rho \1_{\rho \leq \eps^{-2+\delta}} \1_{R_{\eps,x}\leq \eps^2}$ and to the properties of Poisson point processes to infer that
{\begin{align}\label{estimate.ball.radii}
\E\bigl[  \sum_{\Phi(Q)} \rho_z^{d-2} \1_{\rho_z < \eps^{-2+\delta}} \1_{R_{\eps,z} \leq  \aeps \rho_z \vee \eps^2} \bigr]&\lesssim \E_\rho\biggl[\rho^{d-2} \E_{\Phi}\bigl[ \1_{R_{\eps,0} < \aeps\rho \vee \eps^2} \bigr]  \bigr] \biggr]\\
\stackrel{\eqref{integrability.radii}}{\lesssim}  \eps^d + \eps^{(\frac{2}{d-2} - \delta)\beta}.
\end{align}}
Hence,
\begin{align}
| \E\bigl[ (Z_i -  \lambda\E \bigl[ \rho \bigr] \bigr] | & \lesssim \eps^{d} + \E\bigl[\rho \1_{\rho \geq \eps^{-\frac{2}{d-2}+\delta}}\bigr]   + \E\bigl[  \sum_{\Phi(Q_{i})}\frac{\rho_z^{2(d-2)}}{\tilde R_{\eps,z} - \aeps\rho_z} \1_{\rho < \eps^{-\frac{2}{d-2}+\delta}} \1_{R_{\eps,z} \geq \aeps \rho_z\vee \eps^2 }\bigr]\\
&\stackrel{\eqref{Cheb}}{\lesssim}\eps^{(\frac{2}{d-2}-\delta)\beta} + \E\bigl[  \sum_{\Phi(Q_{i})} \biggl(\frac{\rho_z^{2}}{\tilde R_{\eps,z}}\biggr)^{d-2}  \1_{\rho < \eps^{-\frac{2}{d-2}+\delta}} \1_{R_{\eps,z} \geq \aeps \rho_z \vee \eps^2}\bigr]
\end{align}
To establish \eqref{LLN.2} it remains to use \eqref{integrability.radii}, \eqref{def.tilde.R} an argument similar to \eqref{log.term} and \eqref{logarithmic.1} to infer that
\begin{align}
 \E\bigl[  \sum_{\Phi(Q_{i})} \biggl(\frac{\rho_z^{2}}{\tilde R_{\eps,z}}\biggr)^{d-2}  \1_{\rho < \eps^{-\frac{2}{d-2}+\delta}} \1_{R_{\eps,z} \geq \aeps \rho_z \vee \eps^2}\bigr]&\lesssim \eps^{d- 2 + (\frac{2}{d-2} -\delta)\beta +(d-2)\delta} \E_{\Phi}\bigl[ \tilde R_{\eps,0}^{-(d-2)}\1_{R_{\eps,z} \geq \eps^2}\bigr]\\
 & \lesssim \eps^{(\frac{2}{d-2} -\delta)\beta +(d-2)(\delta- \kappa)}.
\end{align}
Since $\kappa<\delta$, this concludes the proof of \eqref{LLN.2} and, in turn, of \eqref{main.5.ppp.2}. The proof of Theorem \ref{t.main} is thus complete.
\end{proof}

\bigskip

\begin{proof}[Proof of Lemma \ref{l.capacity.average.ppp}]
The proof of this lemma follows the same lines of the argument for Lemma \ref{l.capacity.average}. We resort to the construction made in Lemma \ref{l.geometry.ppp} (c.f. \eqref{definition.bad.index}) to decompose 
\begin{align}\label{capacity.bad.2}
 \eps^d \sum_{z \in J^\eps_b} \rho_z^{d-2} +  \eps^d \sum_{z \in  K^\eps_b }\rho_z^{d-2} + \eps^d \sum_{z \in  C^\eps_b} \rho_z^{d-2} +  \eps^d \sum_{z \in \tilde I^\eps_b} \rho_z^{d-2}.
\end{align}
The expectation of the first sum is bounded as follows: We use definition \eqref{index.set.ppp.1} and argue as done for \eqref{main.4.ppp} to reduce to
\begin{align}
\E \bigl[ \eps^d \sum_{z \in J^\eps_b} \rho_z^{d-2} \bigr] \lesssim \E \bigl[ \sum_{z \in \Phi(Q)} \rho_z^{d-2}\1_{\rho_z \geq \eps^{-\frac{2}{d-2} +\delta}}\bigr] \lesssim \E[\rho^{d-2} \1_{\rho> \eps^{-\frac{2}{d-2} +\delta}} \bigr] \stackrel{\eqref{Cheb}}{\lesssim} \eps^{(\frac{2}{d-2}- \delta)\beta}.
\end{align}
The second sum in \eqref{capacity.bad.2} may be treated likewise since, by definition \eqref{index.set.ppp.3}, we have
$$
\eps^d \sum_{z \in K^\eps_b} \rho_z^{d-2} \leq  \eps^d \sum_{z \in \Phi^\eps(D)} \rho_z^{d-2} \1_{R_{\eps,z} \leq \eps^2}.
$$
Similarly, we use \eqref{index.set.ppp.4} to rewrite
$$
 \eps^d \sum_{z \in C^\eps_b} \rho_z^{d-2} \leq  \eps^d \sum_{z \in \Phi^\eps(D)} \rho_z^{d-2} \1_{\rho_z < \eps^{-\frac{2}{d-2} +\delta}} \1_{R_{\eps,z} \leq 2\aeps\rho_z}.
$$
Taking the expectation, we bound this term by $\eps^{(\frac{2}{d-2}-\delta)\beta}$ as done in \eqref{estimate.ball.radii}. Hence, it only remains to estimate the last sum in \eqref{capacity.bad.2}. As done for the same sum in \eqref{capacity.bad.periodic.1}, we use definition \eqref{index.set.ppp.3} to rewrite
\begin{align}
\E \bigl[ & \eps^d \sum_{z \in \tilde I^\eps_b} \rho_z^{d-2} \bigr]\\
& \lesssim \E \bigl[  \eps^d \sum_{w \in \Phi^\eps(D)} (\1_{\rho_w \geq \eps^{-\frac{2}{d-2} +\delta}} +  \1_{\rho_w < \eps^{-\frac{2}{d-2} +\delta}} \1_{R_{\eps,w} \leq 2\aeps\rho_w \vee \eps^2}) \sum_{z \in \Phi^\eps(D)\backslash\{ w \} \, \atop \eps |w-z| \leq \eps + \aeps\rho_w \wedge 1}  \rho_z^{d-2} \bigr]
\end{align}
and, again by stationarity, reduce to
\begin{align}
\E \bigl[ \eps^d &\sum_{z \in \tilde I^\eps_b} \rho_z^{d-2} \bigr]\\
& \lesssim \E \bigl[  \sum_{w \in \Phi(Q)} (\1_{\rho_w \geq \eps^{-\frac{2}{d-2} +\delta}} +  \1_{\rho_w < \eps^{-\frac{2}{d-2} +\delta}} \1_{R_{\eps,w} \leq 2\aeps\rho_w \vee \eps^2}) \sum_{z \in \Phi^\eps(D)\backslash\{ w \} \, \atop \eps |w-z| \leq \eps + \aeps\rho_w \wedge 1} \rho_z^{d-2} \bigr].
\end{align}
By Lemma \ref{product.measure} applied to
\begin{align}
G( ( x, \rho), \omega)& = (\1_{\rho \geq \eps^{-\frac{2}{d-2} +\delta}} +  \1_{\rho < \eps^{-\frac{2}{d-2} +\delta}} \1_{R_{\eps,x} \leq 2\aeps y \vee \eps^2})  \sum_{z \in \Phi^\eps(D) \, \atop \eps |x-z| \leq \eps + \aeps \rho \wedge 1} \rho_z^{d-2},
\end{align}
we infer that
\begin{align}
\E \bigl[  \eps^d& \sum_{z \in \tilde I^\eps_b} \rho_z^{d-2} \bigr] \lesssim \E_\rho \biggl[ \E\bigl[(\1_{\rho \geq \eps^{-\frac{2}{d-2} +\delta}} +  \1_{\rho < \eps^{-\frac{2}{d-2} +\delta}} \1_{R_{\eps,0} \leq 2\aeps\rho \vee \eps^2})  \sum_{z \in \Phi^\eps(D)\backslash\{ 0\} \, \atop \eps |z| \leq \eps + \aeps\rho \wedge 1} \rho_z^{d-2} \bigr] \biggr].
\end{align}
Since the marks $\{\rho_z\}$ are identically distributed and independent, we use \eqref{integrability.radii} to bound
\begin{align}
\E \bigl[ \eps^d \sum_{z \in \tilde I^\eps_b} \rho_z^{d-2} \bigr] \lesssim  \E_\rho \biggl[ \E_\Phi\bigl[ (\1_{\rho \geq \eps^{-\frac{2}{d-2} +\delta}} &+  \1_{\rho < \eps^{-\frac{2}{d-2} +\delta}} \1_{R_{\eps,0} \leq 2\aeps\rho \vee \eps^2})  \\
&\times  \# \{ z \in \Phi \backslash\{ 0 \} \, \colon \, \eps |z| \leq \eps + \aeps\rho \wedge 1 \} \bigr] \biggr] \, \d x.
\end{align}
Since 
$$
\E_\Phi\bigl[ \# \{ z \in \Phi \backslash\{ 0 \} \,  \colon \, \eps  |z| \leq \eps + \aeps \rho \wedge 1 \} \bigr] \lesssim (\rho^{d-2} + 1),
$$
the first term on the right-hand side above is easily bounded by
\begin{equation}
\begin{aligned}\label{capacity.5}
\E_\rho \biggl[ \1_{\rho \geq \eps^{-\frac{2}{d-2} +\delta}} \E_\Phi\bigl[ \# \{ z \in \Phi \backslash\{ 0 \} \,  \colon \, &\eps  |z| \leq \eps + \aeps \rho \wedge 1 \}  \bigr] \biggr] \\
 & \lesssim \E_\rho\bigl[ \rho^{d-2} \1_{\rho \geq \eps^{-\frac{2}{d-2} +\delta}} \bigr] \stackrel{\eqref{Cheb}}{\lesssim} \eps^{(\frac{2}{d-2} - \delta)\beta}.
 \end{aligned}
 \end{equation}
We now turn to the second term on the right-hand side above: We observe that we may rewrite
\begin{align}
\E_\rho \biggl[ \E_\Phi\bigl[  \1_{\rho < \eps^{-\frac{2}{d-2} +\delta}} \1_{R_{\eps,0} \leq 2\aeps\rho \vee \eps^2} \# \{ z \in \Phi \backslash\{ 0 \} \,  \colon \, \eps  |z| \leq \eps + \aeps \rho \wedge 1 \}  \bigr] \biggr] \\
= \E_\rho \biggl[ \1_{\rho < \eps^{-\frac{2}{d-2} +\delta}} \E_\Phi\bigl[  \1_{R_{\eps,0} \leq 2\aeps\rho \vee \eps^2} \# \{ z \in \Phi \backslash\{ 0 \} \,  \colon \,  |z| \leq 4 \} \bigr] \biggr]
\end{align}
Using H\"older's inequality with exponents  $\frac 3 2$ and $3$ in the inner expectation, definition \eqref{minimal.distance} and the fact that $\Phi$ is a Poisson point process, we thus bound
\begin{align}
\E_\rho& \biggl[ \E_\Phi\bigl[  \1_{\rho < \eps^{-\frac{2}{d-2} +\delta}} \1_{R_{\eps,0} \leq 2\aeps\rho \vee \eps^2} \# \{ z \in \Phi \backslash\{ 0 \} \,  \colon \, \eps  |z| \leq \eps + \aeps \rho \wedge 1 \} \bigr] \biggr] \\
&\leq  \E_\rho \biggl[ \1_{\rho < \eps^{-\frac{2}{d-2} +\delta}} \E_\Phi \bigl[  \1_{R_{\eps,0} \leq 2\aeps\rho \vee \eps^2} \bigr]^{\frac 2 3} \biggr] \stackrel{\eqref{integrability.radii}}{\lesssim} 
\eps^{\frac{2d}{3}} + \eps^{\frac 2 3 (1+ 2\delta + (\frac{2}{d-2} - \delta)\beta)}.
\end{align}
Thanks to the choice of $\delta$ and since $d \geq 2$, the right-hand side is always bounded by $\eps^{ (\frac{2}{d-2} - \delta)\beta}$. Combining this with \eqref{capacity.5} yields
\begin{align}
\E \bigl[ & \eps^d \sum_{z \in \tilde I^\eps_b} \rho_z^{d-2} \bigr] \lesssim \eps^{(\frac{2}{d-2} - \delta)\beta}.
\end{align}
This concludes the proof of Lemma \ref{l.capacity.average.ppp}.
\end{proof}

\bigskip

\section{Auxiliary results}\label{Aux}
Let $\mathcal{Z}:= \{ z_i \}_{i \in I} \subset D$ be a collection of points and let $\mathcal{X}:=\{ X_i \}_{i\in I}, \mathcal{R}:=\{ r_i \}_{i\in I} \subset \R_+$. We assume that 
\begin{align}\label{well.defined.cells}
2X_i < r_i <  \min_{z_j \in \mathcal{Z}, \atop z_j \neq z_i}\{ |z_j - z_i|\}, \ \ \ \text{for every $z_i \in \mathcal{Z}$.}
\end{align}
We define the measure 
\begin{align}\label{measure.M}
M:= \sum_{i \in I} \partial_n v_i \delta_{\partial B_{r_i}(z_i)} \in H^{-1}(D),
\end{align}
where each $v_i \in H^1(B_{r_i}(z_i))$ is the solution of \eqref{def.harmonic.annuli} with $B_{\eps \rho_z}(\eps z)$ and $B_{R_{\eps,z}}(\eps z)$ replaced by $B_{X_i}(z_i)$ and $B_{r_i}(z_i)$, respectively.

\smallskip

The next lemma is a generalization of the result by \cite{Kohn_Vogelius} used in \cite{Kacimi_Murat} to show the analogue of Theorem \ref{t.main} in the case of periodic holes $H\eps$.\begin{lem}\label{Kohn_Vogelius}
Let $\mathcal{Z}$, $\mathcal{X}$ and $\mathcal{R}$ be as above. Then, there exists a constant $C=C(d)$ such that for every finite Lipschitz and (essentially) disjoint covering $\{ K_j\}_{j \in J}$ of $D$ such that
\begin{align}\label{contains.balls}
B_{2r_i}(z_i) \subset K_j \ \ \ \text{OR} \ \ \ B_{r_i}(z_i) \cap K_j= \emptyset \ \ \ \ \text{for every $i \in I$, $j \in J$}
\end{align}
we have that
\begin{align}\label{KV.2}
\| M -m \|_{H^{-1}(D)} \leq C  \max_{j \in J}\mathop{diam}(K_j) \bigl( \sum_{i \in I} X_i^{2(d-2)} r_i^{-d}  \bigr)^{\frac 1 2},
\end{align}
with 
\begin{align}\label{mean.m}
m := c_d \sum_{j \in J} \bigl(\frac{1}{|K_{j}|} \sum_{i \in I, \atop z_i \in K_j}\frac{X_i^{d-2} r_i^{d-2}}{r_i^{d-2} - X_i^{d-2}} \bigr) \1_{K_{j}}.
\end{align}
Here, the constant $c_d$ is as in \eqref{strange.term}.
\end{lem}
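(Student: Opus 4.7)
The plan is to prove the bound by duality: for $\phi \in H^1_0(D)$, I will estimate $\langle M - m, \phi \rangle$ cell by cell and sum with Cauchy-Schwarz.

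\smallskip

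\textbf{Step 1 (algebraic identity on each cell).} Since $v_i$ is radially symmetric, $\partial_n v_i$ is constant on $\partial B_{r_i}(z_i)$ and one computes directly that its total mass equals the annulus capacity
\[ \int_{\partial B_{r_i}(z_i)} \partial_n v_i = c_d \frac{X_i^{d-2} r_i^{d-2}}{r_i^{d-2} - X_i^{d-2}} =: c_i, \]
so $\int_{\partial B_{r_i}(z_i)} \partial_n v_i \, g = c_i \avsum_{\partial B_{r_i}(z_i)} g$ for any $g$. By \eqref{contains.balls} and the definition \eqref{mean.m} of $m$, the total mass of $M-m$ on each $K_j$ is zero, so for any $\phi \in H^1_0(D)$,
\[ \langle M - m, \phi \rangle = \sum_{j \in J} \sum_{i \colon z_i \in K_j} c_i \avsum_{\partial B_{r_i}(z_i)} (\phi - \bar\phi_{K_j}), \]
where $\bar\phi_{K_j}$ denotes the $K_j$-average.

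\smallskip

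\textbf{Step 2 (trace + Poincar\'e estimate on each ball).} I split
\[ \avsum_{\partial B_{r_i}} (\phi - \bar\phi_{K_j}) = \bigl(\avsum_{\partial B_{r_i}}\phi - \avsum_{B_{r_i}}\phi\bigr) + \bigl(\avsum_{B_{r_i}}\phi - \bar\phi_{K_j}\bigr). \]
For the first term, the standard trace-average inequality (by scaling from the unit ball) gives
\[ \bigl|\avsum_{\partial B_{r_i}}\phi - \avsum_{B_{r_i}}\phi\bigr| \leq C \, r_i^{1-d/2} \|\nabla\phi\|_{L^2(B_{r_i}(z_i))}. \]
For the second term, I use Cauchy-Schwarz inside the ball:
\[ \bigl|\avsum_{B_{r_i}}\phi - \bar\phi_{K_j}\bigr| \leq |B_{r_i}|^{-1/2} \|\phi - \bar\phi_{K_j}\|_{L^2(B_{r_i}(z_i))}. \]

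\smallskip

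\textbf{Step 3 (summation within $K_j$).} Since by \eqref{contains.balls} the balls $\{B_{r_i}(z_i)\}_{z_i \in K_j}$ are disjoint and contained in $K_j$, Cauchy-Schwarz over $i$ yields
\begin{align*}
\sum_{i\colon z_i \in K_j} c_i r_i^{1-d/2} \|\nabla\phi\|_{L^2(B_{r_i})} &\leq \Bigl(\sum_{i \colon z_i \in K_j} c_i^2 r_i^{2-d}\Bigr)^{1/2} \|\nabla\phi\|_{L^2(K_j)},\\
\sum_{i \colon z_i \in K_j} c_i |B_{r_i}|^{-1/2} \|\phi - \bar\phi_{K_j}\|_{L^2(B_{r_i})} &\leq \Bigl(\sum_{i \colon z_i \in K_j} c_i^2 r_i^{-d}\Bigr)^{1/2} \|\phi - \bar\phi_{K_j}\|_{L^2(K_j)}.
\end{align*}
Using $r_i \leq \mathop{diam}(K_j)$ (because $B_{r_i}(z_i) \subseteq K_j$) and the Poincar\'e inequality on $K_j$, both estimates collapse to
\[ \Bigl|\sum_{i \colon z_i \in K_j} c_i \avsum_{\partial B_{r_i}}(\phi - \bar\phi_{K_j})\Bigr| \leq C \mathop{diam}(K_j) \Bigl(\sum_{i \colon z_i \in K_j} c_i^2 r_i^{-d}\Bigr)^{1/2} \|\nabla\phi\|_{L^2(K_j)}. \]

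\smallskip

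\textbf{Step 4 (summation over $j$).} A final Cauchy-Schwarz in $j$ gives
\[ |\langle M - m, \phi \rangle| \leq C \max_{j} \mathop{diam}(K_j) \Bigl(\sum_{i \in I} c_i^2 r_i^{-d}\Bigr)^{1/2} \|\nabla\phi\|_{L^2(D)}. \]
Since \eqref{well.defined.cells} gives $X_i < r_i/2$, we have $r_i^{d-2} - X_i^{d-2} \geq (1 - 2^{-(d-2)}) r_i^{d-2}$ and hence $c_i \leq C X_i^{d-2}$, which yields the claimed bound \eqref{KV.2} after taking the supremum over $\phi$.

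\smallskip

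The only subtlety I expect is the Poincar\'e constant on the cells $K_j$: for the cubes $Q_{k,z}$ used in the periodic case this is trivial, but for the irregular cells $K_{k,z}$ built in \eqref{covering.Poisson} one needs to check that the Poincar\'e constant remains $\lesssim \mathop{diam}(K_j)$ uniformly, which follows from the fact that $K_{k,z}$ is obtained from a cube by adding and removing cubes of comparable or smaller size while respecting the geometric condition \eqref{contains.balls}.
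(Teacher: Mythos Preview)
Your proof is correct and reaches the same estimate, but by a genuinely different and more elementary route than the paper. The paper introduces on each cell $K_j$ an auxiliary function $q_j$ solving the Neumann problem $-\Delta q_j = M_j - m_j$ with zero mean, rewrites $\langle M-m,\phi\rangle = \sum_j \int_{K_j}\nabla q_j\cdot\nabla\phi$, and then bounds $\|\nabla q_j\|_{L^2(K_j)}$ via an energy estimate that uses the trace embedding on $\partial B_{r_i}$ and Poincar\'e--Wirtinger for $q_j$. You instead work directly by duality on $\phi$: you exploit that $\partial_n v_i$ is constant on each sphere to reduce to controlling $\avsum_{\partial B_{r_i}}\phi - \bar\phi_{K_j}$, split through the ball average, and apply the scaled trace-average inequality and Poincar\'e on $K_j$ to $\phi$ itself. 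Your argument avoids the auxiliary PDE entirely and is shorter; the paper's approach is the classical Kohn--Vogelius construction and may generalize more readily to measures whose density on the spheres is not constant. Both proofs ultimately rest on the same two ingredients (zero mean of $M-m$ on each cell, and a uniform Poincar\'e--Wirtinger constant $\lesssim\mathop{diam}(K_j)$ on the cells), and your closing remark on the Poincar\'e constant for the irregular cells $K_{k,z}$ identifies exactly the point that also needs care in the paper's version.
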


\bigskip

The next result is a very easy consequence of the assumptions (i)-(iii) on the marked point process $(\Phi, \mathcal{R})$. Since it is used extensively in the proof of Theorem \ref{t.main}, in the sake of a self-contained presentation, we give below the statement and its brief proof.
\begin{lem}\label{product.measure}
Let $A\subset \R^d$ be a bounded set containing the origin. Let $(\Phi; \mathcal{R})$ satisfy (i)-(iii) with $\Phi=\mathop{Poi}(\lambda)$. For every $\eps > 0$ and $x \in \R^d$, let $R_{\eps,z}$ be as in \eqref{minimal.distance}. Then for every $G:  \R^d \times \R_+ \times \Omega \to \R$ it holds
\begin{align*}
\E \bigl[ \sum_{z \in \Phi(A)} G( (z, \rho_z); \omega &\backslash \{(z, \rho_z)\} ) \bigr]= \lambda |A| \E_\rho \biggl[ \E \bigl[ G((0, \rho); \omega) \bigr] \biggr].
\end{align*}
\end{lem}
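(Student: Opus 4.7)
The plan is to combine the Mecke (refined Campbell) formula for Poisson point processes with the stationarity built into conditions (i)--(ii). By (i)--(iii), the marked process $(\Phi, \mathcal{R})$ may be identified with a Poisson point process on $\R^d \times \R_+$ whose intensity measure is the product $\lambda\, dx \otimes f(\rho)\, d\rho$, where $f$ is the common density of the marks; this is the key structural fact that unlocks both steps below.

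The first step is to invoke the Mecke identity in the form that, for any measurable $h \colon (\R^d \times \R_+) \times \Omega \to \R_+$,
\begin{align*}
\E\Bigl[\sum_{z \in \Phi} h\bigl((z,\rho_z);\omega \setminus \{(z,\rho_z)\}\bigr)\Bigr] = \lambda \int_{\R^d} \int_0^\infty \E\bigl[ h((z,\rho);\omega) \bigr]\, f(\rho)\, d\rho\, dz.
\end{align*}
Choosing $h((z,\rho);\omega) = \1_A(z)\, G((z,\rho);\omega)$ turns the left-hand side of the lemma into
\begin{align*}
\lambda \int_A \int_0^\infty \E\bigl[ G((z,\rho);\omega) \bigr]\, f(\rho)\, d\rho\, dz.
\end{align*}

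The second step is to invoke the stationarity of $(\Phi,\mathcal{R})$ under $\{\tau_x\}_{x \in \R^d}$, which follows at once from the translation invariance of the Poisson intensity $\lambda\, dx$ and the i.i.d.\ property of the marks. Translating the configuration by $-z$ gives $\E[G((z,\rho);\omega)] = \E[G((0,\rho);\omega)]$ for every $z \in \R^d$, so the integrand in the previous display is constant in $z$; the $z$-integral over $A$ then contributes a factor $|A|$, and the remaining $\rho$-integral is precisely $\E_\rho[\E[G((0,\rho);\omega)]]$, yielding the claim.

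There is no genuine obstacle in this proof beyond verifying that $G$ is jointly measurable in $(z,\rho,\omega)$, which is immediate for every instance of $G$ used in this paper (indicators of events depending on $R_{\eps,z}$, products with powers of $\rho$, etc.). The only point worth emphasising is that the Mecke formula is applied to the marked process directly, so the combinatorial subtleties of summing over $\Phi$ while excluding the point $z$ itself are absorbed into the statement of the formula and do not need to be handled by hand.
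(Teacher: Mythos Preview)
Your proposal is correct and follows the same two-step logic as the paper: Mecke/Slivnyak formula followed by stationarity to replace $z$ by $0$. The only difference is presentational: the paper rederives the Mecke identity from scratch by expanding the Poisson distribution over $\{N(A)=n\}$, using symmetry in the $n$ points, and relabelling to peel off one point, whereas you invoke the formula by name; the mathematical content is identical.
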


\bigskip

\begin{proof}[Proof of Lemma \ref{Kohn_Vogelius}]  With no loss of generality, we give the proof for $d=3$. We start by remarking that, thanks to \eqref{contains.balls}, for every $j \in J$,  there exists $\eta_j \in C^\infty_0(K_j)$ such that $\eta_j =1$ in $\bigcup_{i \in I, \atop z_i \subset K_j} B_{r_i}(z_i)$. This in particular allows us to rewrite the measure $M$ in \eqref{measure.M} as
\begin{align}\label{organizing.M}
M= \sum_{j \in J} \eta_j M_j, \ \ \ M_j:= \sum_{i \in I, \atop z_i \subset K_j} \partial_n v_i \delta_{\partial B_{r_i}(z_i)}
\end{align}
and use the definition of the capacitary functions $\{ v_i \}_{i\in I}$ (see also \eqref{explicit.formula}) to observe that $m$ in \eqref{mean.m} satisfies
\begin{align}\label{organizing.m}
m:= \sum_{j\in J} m_j, \ \ \ m_j :=  \bigl(\frac{1}{|K_j|} \sum_{i \in I, \atop z_i \subset K_j} \int_{\partial B_{r_i}(z_i)} \partial_n v_i \bigr) \1_{K_j}
\end{align}
For every $j\in J$, we thus define  $q_j \in H^1(K_j)$ as the (weak) solution to
\begin{align}\label{def.q.eps}
\begin{cases}
-\Delta q_{j} = 	\eta M_j -  m_{j} \ \ \ &\text{ in $K_{j}$}\\
\partial_n q_{j} = 0 \ \ \ &\text{ on $\partial K_{j}$}
\end{cases}, \ \ \ \  \int_{K_{j}} q_{j}=0,
\end{align}
in the sense that for every $u \in H^1(K_j)$ 
$$
\int_{K_j} \nabla u \cdot \nabla q_j = \langle M_j ; \eta u \rangle - \int_{K_j} m_j u. 
$$
We stress that $q_j$ exists since $K_{j}$ is a Lipschitz domain and, thanks to \eqref{contains.balls} and \eqref{organizing.M}-\eqref{organizing.m}, the compatibility condition
$$
\langle M_j ; \eta \rangle - \int_{K_j} m_j = 0
$$
is satisfied.

\bigskip

By \eqref{def.q.eps} and \eqref{organizing.M}-\eqref{organizing.m}, for any $\phi \in H^1_0(D)$ we thus have that
\begin{align*}
\langle M - m ; \phi \rangle = \sum_{j \in J} \int_{K_{j}} \nabla q_{j} \cdot \nabla \phi,
\end{align*}
and, by Cauchy-Schwarz's inequality, also
\begin{align}\label{H.minus.3}
\| M - m \|_{H^{-1}(D)} \leq  \bigl(\sum_{j \in J}  \int_{K_{j}}|\nabla q_{j}|^2 \bigr)^{\frac 1 2}. 
\end{align}
We now claim that for each $j \in J$
\begin{align}\label{H.minus.5.aux}
\bigl( \int_{K_{j}}|\nabla q_{j}|^2 \bigr)^{\frac 1 2} \lesssim  \mathop{diam}(K_j)  \bigl( \sum_{i \in I, \atop z_i \in K_j} X_i^2 r_i^{-3} \bigr)^{\frac 1 2}.
\end{align}
This inequality and \eqref{H.minus.3} immediately yield the statement of Lemma \ref{Kohn_Vogelius}.

\bigskip

We argue \eqref{H.minus.5.aux} as follows: testing the equation for $q_{j}$ with $q_{j}$ itself and using that $q_j$ has zero mean (see \eqref{def.q.eps}), we obtain
\begin{align*}
 \int_{K_{j}}|\nabla q_{j}|^2 &= \sum_{i \in I, \atop z_i \in K_j} \int_{\partial B_{r_i}(z_i)} \partial_n v_i \, q_{z}.
 \end{align*}
By Cauchy-Schwarz's inequality, this implies that
 \begin{align*}
  \int_{K_{j}}|\nabla q_{j}|^2  \lesssim \sum_{i \in I, \atop z_i \in K_j} \bigl(\int_{\partial B_{r_i}(z_i)} |\partial_n v_i|^2 \bigr)^{\frac 1 2} \bigl(\int_{\partial B_{r_i}(z_i)}|q_{j}|^2 \bigr)^{\frac 1 2}.
 \end{align*}
By the definition of $v_i$ (see also \eqref{explicit.formula}), we rewrite the above inequality as
\begin{equation}
  \begin{aligned}\label{H.minus.6.bis}
  \int_{K_{j}}|\nabla q_{j}|^2  &\lesssim \sum_{i \in I, \atop z_i \in K_j} r_i^{-1} \bigl(\frac{X_i r_i}{r_i - X_i}\bigr) \bigl(\int_{\partial B_{r_i}(z_i)}|q_{j}|^2 \bigr)^{\frac 1 2}\\
  & \stackrel{\eqref{well.defined.cells}}{\leq}  \sum_{i \in I, \atop z_i \in K_j} r_i^{-1} X_i \bigl(\int_{\partial B_{r_i}(z_i)}|q_{j}|^2 \bigr)^{\frac 1 2} .
 \end{aligned}
 \end{equation}
 By the Trace embedding $L^2(\partial B_{r_i}(z_i)) \hookrightarrow H^1(B_{r_i}(z_i))$ we have
 \begin{align}\label{trace.embedding}
 \int_{\partial B_{r_i}(z_i)}|q_{j}|^2 \lesssim  r_i^{-1 }\bigl( \int_{B_{r_i}(z_i)}|q_{j}|^2  + r_i^2 \int_{B_{r_i}(z_i)} |\nabla q_j|^2 \bigr)
 \end{align}
 so that this, \eqref{H.minus.6.bis} and an application of Cauchy-Schwarz's inequality imply
    \begin{align}\label{H.minus.6}
  \int_{K_{j}}|\nabla q_{j}|^2  \lesssim \bigl( \sum_{i \in I, \atop z_i \in K_j} X_i^2 r_i^{-3} \bigr)^{\frac 1 2} \bigl( \sum_{i \in I, \atop z_i \in K_j} \int_{B_{r_i}(z_i)} (|q_{z}|^2  + r_i^2  |\nabla q_z|^2) \bigr)^{\frac 1 2}\\
  \stackrel{\eqref{contains.balls}-\eqref{well.defined.cells}}{\lesssim}  \bigl( \sum_{i \in I, \atop z_i \in K_j} X_i^2 r_i^{-3} \bigr)^{\frac 1 2} \bigl(\int_{K_j} (|q_{z}|^2  + \mathop{diam}(K_j)^2 |\nabla q_z|^2) \bigr)^{\frac 1 2}
 \end{align}
Since by \eqref{def.q.eps} the function $q_j$ has zero mean, we may apply Poincar\'e-Wirtinger's inequality to conclude  that
  \begin{align}\label{H.minus.7}
  \int_{K_{j}}|\nabla q_{j}|^2  \lesssim  \mathop{diam}(K_j)  \bigl( \sum_{i \in I, \atop z_i \in K_j} X_i^2 r_i^{-3} \bigr)^{\frac 1 2} \bigl(\int_{K_{j}}|\nabla q_{j}|^2 \bigr)^{\frac 1 2}.
 \end{align}
This establishes \eqref{H.minus.5.aux} and, in turn, concludes the proof of Lemma \ref{Kohn_Vogelius}.
\end{proof}

\bigskip

\begin{proof}[Proof of Lemma \ref{product.measure}] Without loss of generality we assume that $|A|=1$.
By the assumption (i)-(ii) on $(\Phi, \mathcal{R})$ we have that
\begin{align}
\E& \bigl[ \sum_{z \in \Phi(A)}  G( (z, \rho_z), \omega \backslash (z, \rho_z) ) \bigr]= e^{-\lambda} \sum_{n \geq 1} \frac{\lambda^n}{n!}\\
&\times  \sum_{i=1}^n \int_{(A \times \R_+)^n } \E \bigl[ G( (x_i, \rho_i), \omega \backslash \{ (x_i, \rho_i)\} )  \, | \, \Phi(A), \{\rho_z \}_{\Phi(A)} \bigr] f(\rho_1) \d\rho_1 \d x_1 \cdots  f(\rho_n) \d\rho_n \d x_n.
\end{align}
and, by symmetry,
\begin{align}
\E &\bigl[ \sum_{z \in \Phi(A) }   G( (z, \rho_z), \omega \backslash \{(z, \rho_z)\} ) \bigr] = \lambda e^{-\lambda} \sum_{n \geq 1} \frac{\lambda^{n-1}}{(n-1)!}  \\
&\times \int_{(A\times \R_+)^{n} } \E \bigl[  G( (x_1, \rho_1), \omega \backslash \{(x_1, \rho_1)\} )  \, | \, \Phi(A), \{\rho_z \}_{\Phi(A)} \bigr] f(\rho_1) \d\rho_1 \d x_1 \cdots  f(\rho_n) \d\rho_n \d x_n.
\end{align}
Appealing to Fubini's theorem and relabelling the elements $\{ (x_i, \rho_i)\}_{i=1}^n$, this implies
\begin{align}
\E &\bigl[ \sum_{z \in \Phi(A) } G((z, \rho_z), \omega \backslash \{(z, \rho_z)\} ) \bigr] = \lambda \int_{A \times \R_+}  \biggr(e^{-\lambda} \sum_{n \geq 0} \frac{\lambda^{n}}{n!}  \\
&\times \int_{(A\times \R_+)^{n} } \E \bigl[ G( (x, \rho), \omega)  \, | \, \Phi(A), \{\rho_z \}_{\Phi(A)} \bigr] f(\rho_1) \d\rho_1 \d x_1 \cdots  f(\rho_n) \d\rho_n \d x_n \biggr) \, f(\rho) \, \d x,
\end{align}
i.e.
\begin{align}
\E &\bigl[ \sum_{z \in \Phi(A) } G( (z, \rho_z), \omega \backslash \{(z, \rho_z)\} ) \bigr] = \lambda \int_{A} \E_\rho \biggl[ \E \bigl[ G((x, \rho) , \omega) \bigr] \biggr] \, \d x
\end{align}
Since $\Phi$ is stationary, the above inequality immediately implies Lemma \ref{product.measure}.
\end{proof}


\begin{thebibliography}{10}



\bibitem{AllaireARMA1990a}
G.~Allaire, \emph{Homogenization of the {N}avier-{S}tokes equations in open
  sets perforated with tiny holes. {I}. {A}bstract framework, a volume
  distribution of holes}, Arch. Rational Mech. Anal. \textbf{113} (1990),
  no.~3, 209--259.
  
  \bibitem{Allaire_arma2}
G.~Allaire, \emph{Homogenization of the Navier-Stokes equations in open sets perforated with tiny holes II: Non-critical sizes of the holes for a volume distribution and a surface distribution of holes}, Arch. Rational Mech. Anal. \textbf{113} 113,  261?298, 1991.


\bibitem{Brillard1986-1987}
A.~Brillard, \emph{Asymptotic analysis of incompressible and viscous fluid flow
  through porous media. {B}rinkman's law via epi-convergence methods}, Annales
  de la Facult\'e des sciences de Toulouse : Math\'ematiques \textbf{8}
  (1986-1987), no.~2, 225--252.


\bibitem{CaffarelliMellet.fractional}
L.~Caffarelli and A.~Mellet, \emph{Random homogenization of fractional obstacle
  problems},  \textbf{3} (2008), 523--554.

\bibitem{CaffarelliMellet}
L.~A. Caffarelli and A.~Mellet, \emph{Random homogenization of an obstacle
  problem}, Annales de l'I.H.P. Analyse non lin\'eaire \textbf{26} (2009),
  no.~2, 375--395 (eng).

\bibitem{CasadoDiaz.nonlinear}
C.~Calvo-Jurado, J.~Casado-D\'iaz, and M.~Luna-Laynez, \emph{{Homogenization of
  nonlinear Dirichlet problems in random perforated domains}}, Nonlinear
  Analysis \textbf{133} (2016), 250--274.

\bibitem{CasadoDiaz}
\bysame, \emph{{Homogenization of the Poisson equation with Dirichlet
  conditions in random perforated domains}}, Journal of Computational and
  Applied Mathematics \textbf{275} (2016), 375--381.

\bibitem{Cioranescu_Murat}
D.~Cioranescu and F.~Murat, \emph{A strange term coming from nowhere}, Topics
  in the Mathematical Modelling of Composite Materials. Progress in Nonlinear
  Differential Equations and Their Applications. \textbf{31} (1997), 45--93.

\bibitem{DalMasoGarroni.punctured}
G.~Dal~Maso and A.~Garroni, \emph{New results on the asymptotic behaviour of
  dirichlet problems in perforated domains}, Math. Mod. Meth. Appl. Sci.
  \textbf{3} (1994), 373--407.

\bibitem{Daley.Jones.book2}
 D.J. Daley and D. Vere-Jones, \emph{An introduction to the theory of point processes. vol.{II}:
  General theory and structures, probability and its applications},
  Springer-Verlag New York, 2008.
  
  \bibitem{Desvillettes2008}
L.~Desvillettes, F.~Golse, and V.~Ricci, \emph{The mean-field limit for solid
  particles in a {N}avier-{S}tokes flow}, Journal of Statistical Physics
  \textbf{131} (2008), no.~5, 941--967.
  
\bibitem{Figari_Orlandi}
R. Figari, E. Orlandi and S. Teta, \emph{The laplacian in regions with many small obstacles: Fluctuations around the limit operator}, J. Stat. Phys. {\bf 41}, 465?487, 1985.

\bibitem{Focardi.fractional}
M~Focardi, \emph{Homogenization of random fractional obstacle problems via
  $\gamma$-convergence}, Com. in PDEs \textbf{34} (2009), no.~12, 1607--1631.


\bibitem{GH1}
A.~Giunti and R. H\"ofer, \emph{Homogenization for the Stokes equations in randomly perforated domains under almost minimal assumptions on the size of the holes}, Ann. Inst. H. Poincare'- An. Nonl., \textbf{36}, no. 7, 1829-1868, 2019.

\bibitem{GH_pressure}
A.~Giunti and R. H\"ofer, \emph{Convergence of the pressure in the homogenization of the Stokes equations in randomly perforated domains}, ArXiv preprint, 2020.

\bibitem{GHV}
A.~Giunti, R. H\"ofer and J.J. L. Vel\`azquez, \emph{Homogenization for the Poisson equation in randomly perforated domains under minimal assumptions on the size of the holes},
Comm. in PDEs, \textbf{43}, no. 9, 1377-1412, 2018.

\bibitem{Hillairet2018}
M.~Hillairet, \emph{On the homogenization of the {S}tokes problem in a
  perforated domain}, Archive for Rational Mechanics and Analysis (2018).

\bibitem{Hillairet2017}
M.~Hillairet, A.~Moussa, and F.~Sueur, \emph{On the effect of polydispersity
  and rotation on the {B}rinkman force induced by a cloud of particles on a
  viscous incompressible flow}, arXiv preprint arXiv:1705.08628 (2017).
 
\bibitem{Jonas_Richard}
R. M. H\"ofer and J. Jansen, \emph{Fluctuations in the homogenization of the Poisson and Stokes equations in perforated domains}, ArXiv Preprint, 2020.

  
\bibitem{Kacimi_Murat}
H. Kacimi and F. Murat, \emph{Estimation de l?erreur dans des Problemes de Dirichlet ou apparait un terme etrange}, Calc. Var and PDEs, \textbf{1}, 661-696,1989.

\bibitem{Kohn_Vogelius} R.V.Kohn and M. Vogelius, \emph{A new model for thin plates with rapidly
varying thickness. II: a convergence proof}, Quat. Appl. Math., {\bf 43}, 1-22, 1985.



\bibitem{HoeferVelazquez.reflections}
R.~H\"ofer and J.~J.~L. Velazquez, \emph{The method of reflections,
  homogenization and screening for poisson and stokes equations in perforated
  domains}, Arch. Ration. Mech. Anal. \textbf{227.3} (2018), 1165-1221.


\bibitem{Jing}
W. Jing, \emph{A Unified Homogenization Approach for the Dirichlet Problem in Perforated Domains} ,SIAM J. on Math. An., {\bf 52}, no. 2, 192-1220, 2020.



\bibitem{LEVY198311}
T.~L\`evy, \emph{Fluid flow through an array of fixed particles}, International
  Journal of Engineering Science \textbf{21} (1983), no.~1, 11 -- 23.



\bibitem{MarchenkoKhruslov}
V.~A. Marchenko and E.~Y. Khruslov, \emph{Homogenization of partial
  differential equations}, Progress in Mathematical Physics, 46, Boston, MA:
  Birkh\"auser Boston, Inc., 2006.

\bibitem{NiethammerVelazquez.screening}
B. Niethammer and J.J. L. Vel\`azquez, \emph{Screening in interacting particle systems}, Arch. Ration. Mech.
  Anal. \textbf{180}, n. 3, 493-506, 2016.


\bibitem{papvar.tinyholes}
G.~C. Papanicolaou and S.~R.~S. Varadhan, \emph{Diffusion in regions with many
  small holes}, pp.~190--206, Springer Berlin Heidelberg, Berlin, Heidelberg,
  1980.

  

\bibitem{Rubinstein1986}
J.~Rubinstein, \emph{On the macroscopic description of slow viscous flow past a
  random array of spheres}, J. Statist. Phys. \textbf{44} (1986), no.~5-6,
  849--863. 

\bibitem{Russel}
B.C. Russel, \emph{Homogenization in perforated domains and interior Lipschitz estimates}, J. Diff. Eq. \textbf{263}, no. 6, 3396-3418, 2017. 


\bibitem{SanchezP82}
E.~Sanchez-Palencia, \emph{On the asymptotics of the fluid flow past an array
  of fixed obstacles}, International Journal of Engineering Science - Int. J.
  Eng. Sci. \textbf{20} (1982), 1291--1301.


\bibitem{Tartar}
L.~Tartar, \emph{The general theory of homogenization}, Lecture Notes of the
  Unione Matematica Italiana, vol.~7, Springer-Verlag, Berlin; UMI, Bologna,
  2009, A personalized introduction. \MR{2582099}


\bibitem{Wang_Xu_Zhang}
L. Wang, Q. Xu and P. Zhao, \emph{Quantitative Estimates for Homogenization of Nonlinear Elliptic Operators in Perforated Domains}, ArXiv preprint, 2020.

\bibitem{Zhikov}
V. Zhikov and M. Rychago, \emph{Homogenization of nonlinear elliptic equations of the second order in
perforated domains}, Izv. Ross. Akad. Nauk, Ser. Mat \textbf{61}, 69-89, 1997.

\end{thebibliography}
\end{document}